\numberwithin{equation}{section}
\newtheorem{proposition}{Proposition}[section]
\newtheorem{theorem}[proposition]{Theorem}
\newtheorem{lemma}[proposition]{Lemma}
\newtheorem{definition}[proposition]{Definition}
\renewenvironment{proof}{\smallskip\noindent\emph{\textbf{Proof.}}%
  \hspace{1pt}}{\hspace{-5pt}{\nobreak\quad\nobreak\hfill\nobreak%
    $\square$\vspace{2pt}\par}\smallskip\goodbreak}
\newenvironment{proofof}[1]{\smallskip\noindent{\textbf{Proof~of~#1.}}%
  \hspace{1pt}}{\hspace{-5pt}{\nobreak\quad\nobreak\hfill\nobreak%
    $\square$\vspace{2pt}\par}\smallskip\goodbreak}
\newcommand{\C}[1]{\mathbf{C}^{#1}}
\newcommand{\Cc}[1]{\mathbf{C}_c^{#1}}
\newcommand{\BV}{\mathbf{BV}}
\newcommand{\PC}{\mathbf{PC}}
\renewcommand{\L}[1]{{\mathbf{L}^#1}}
\newcommand{\Lloc}[1]{{\mathbf{L}_{\mathbf{loc}}^{#1}}}
\newcommand{\modulo}[1]{{\left|#1\right|}}
\newcommand{\norma}[1]{{\left\|#1\right\|}}
\newcommand{\caratt}[1]{{\chi_{\strut#1}}}
\newcommand{\reali}{{\mathbb{R}}}
\renewcommand{\epsilon}{\varepsilon}
\renewcommand{\phi}{\varphi}
\renewcommand{\theta}{\vartheta}
\renewcommand{\O}{\mathcal{O}(1)}
\newcommand{\tv}{\mathinner{\rm TV}}
\newcommand{\Lip}{\mathinner{Lip}}
\renewcommand{\d}[1]{\mathinner{\mathrm{d}{#1}}}
\let\@fnsymbol\@arabic \makeatother
\title{Well Posedness and Characterization of Solutions to \\
  Non Conservative Products in Non Homogeneous \\
  Fluid Dynamics Equations}
\author{Rinaldo M. Colombo\footnotemark[1] \and Graziano
  Guerra\footnotemark[2] \and Yannick Holle\footnotemark[3]}
\begin{document}

\maketitle

\footnotetext[1]{INdAM Unit \& Department of Information Engineering,
  University of Brescia, via Branze, 38, 25123 Brescia,
  Italy. e--mail: \texttt{rinaldo.colombo@unibs.it}. ORCID:
  \texttt{0000-0003-0459-585X}.} \footnotetext[2]{Universit\`a degli
  Studi di Milano Bicocca, Dipartimento di Matematica e Applicazioni,
  via R.~Cozzi, 55, 20126 Milano, Italy. e--mail:
  \texttt{graziano.guerra@unimib.it}. ORCID:
  \texttt{0000-0003-2615-2750}.} \footnotetext[3]{RWTH Aachen
  University, Institut f\"ur Mathematik, Templergraben 55, 52062
  Aachen, Germany. e--mail: \texttt{holle@eddy.rwth-aachen.de}. ORCID:
  \texttt{0000-0002-7841-8525}.\vspace{0.05cm}}

\begin{abstract}
  \noindent Consider a balance law where the flux depends explicitly
  on the space variable. At jump discontinuities, modeling
  considerations may impose the defect in the conservation of some
  quantities, thus leading to non conservative products. Below, we
  deduce the evolution in the smooth case from the jump conditions at
  discontinuities. Moreover, the resulting framework enjoys well
  posedness and solutions are uniquely characterized. These results
  apply, for instance, to the flow of water in a canal with varying
  width and depth, as well as to the inviscid Euler equations in pipes
  with varying geometry.

  \medskip

  \noindent\textbf{Keywords:} Fluid flows in canals and pipes; Non conservative
  products in balance laws; Nonhomogeneous Balance laws with measure
  source term.

  \medskip\noindent\textbf{AMS subject Classification:} 35Q35; 76N10; 35L65.
\end{abstract}

\section{Introduction}
\label{sec:Intro}

The flow of water in a canal of smoothly varying width and smoothly
varying bed elevation is described by the following balance law
\begin{equation}
  \label{eq:53}
  \left\{
    \begin{array}{l}
      \partial_t a + \partial_x q = 0
      \\
      \partial_t q + \partial_x \left( \dfrac{q^2}{a}
      + \dfrac12 \, g \, \dfrac{a^2}\sigma\right)
      =
      \dfrac12 \, g\, \dfrac{a^2}{\sigma^2} \, \partial_x \sigma
      -
      g \, a \, \partial_x b \,,
    \end{array}
  \right.
\end{equation}
see~\cite[Formula~(1.1)] {MR4179858}. Here $g$ is gravity, $t$ is
time, $x$ is the longitudinal coordinate along the canal,
$a = a (t,x)$ is the wetted cross sectional area, $q = q (t,x)$ is the
water flow, $\sigma = \sigma (x)$ is the canal width and $b = b (x)$
is the height of the bottom.

The presence of discontinuities in the channel width $\sigma$ or in
the bed elevation $b$ prevents the application of standard theorems
to~\eqref{eq:53}. Indeed, discontinuities arise in the flux and non
conservative products appear in the source term. As is well know the
latter terms lack a unique way to be defined. As a reference to non
conservative products, we refer to~\cite{DalMasoMurat1995, MR1718304}.

In the present work, we construct a framework where~\eqref{eq:53} has
a meaning and is well posed, requiring $\sigma$ and $b$ to be merely
of bounded variation.

Whenever $\sigma$ and $b$ are piecewise constant with jumps at, say,
$\bar x_1, \ldots, \bar x_N$, equation~\eqref{eq:53} fits into the
non--homogeneous system of conservation laws
\begin{displaymath}
  \partial_t u + \partial_x f\left(\zeta (x),u\right) = 0
  \qquad
  x \in \reali \setminus \{\bar x_1, \ldots, \bar x_N\} \,,
\end{displaymath}
equipped with suitable conditions
\begin{equation}
  \label{eq:IntroSingleCoupling}
  \Psi
  \left(
    \zeta (\bar x_i+),u(t,\bar x_i+),
    \zeta (\bar x_i-),u(t,\bar x_i-)
  \right) = 0
  \quad \mbox{for a.e. }t>0
  \mbox{ and } i=1, \ldots, N
\end{equation}
where $\zeta$ is as in~\eqref{eq:77}.

This junction condition, thanks to to the assumptions below,
by~\cite[Lemma~4.1]{ColomboGuerraHolle} and by an immediate extension
of~\cite[Lemma~4.2]{ColomboGuerraHolle}, can be reformulated as
\begin{equation}
  \label{eq:DefPsiCoupling}
  f\left(\zeta (\bar x+), u(t,\bar x+)\right)
  -
  f\left(\zeta (\bar x-),  u(t,\bar x-)\right)
  =
  \Xi\big(\zeta (\bar x+),\zeta (\bar x-),u(t, \bar x-)\big)
  \quad \mbox{for a.e. }t>0
\end{equation}
where $\bar x$ is any point of jump and $\Xi$ measures the defect in
the conservation of $u$ at $\bar x$.

We show that choosing~\eqref{eq:DefPsiCoupling} actually singles out
the source term in~\eqref{eq:21} below, which accounts both for the
smooth changes as well as for the points of jump in $\zeta$. In the
case of~\eqref{eq:53}, this amounts to show that a careful choice of
$\Xi$ allows to extend~\eqref{eq:53} to the case of $\sigma$ and $b$
in $\BV$.

More precisely, when $\zeta \in \BV (\reali; \reali^p)$ and given a
piecewise constant approximation $\zeta^h$ of $\zeta$ with finite
number of jumps located at $\bar x \in \mathcal{I} (\zeta^h)$, we
obtain the following balance law with measure-valued source term
\begin{equation}
  \label{eq:68}
  \left\{
    \begin{array}{l}
      \partial_t u+\partial_x f(\zeta^h , u)
      =
      \sum\limits_{\bar x \in \mathcal{I} (\zeta^h)}
      \Xi \left(\zeta^h ( \bar x+), \zeta^h ( \bar x-), u (\cdot, \bar  x-)\right) \;
      \delta_{\bar  x}
      \\
      u (0,x) = u_o (x),
    \end{array}
  \right.
\end{equation}
where $\delta_{\bar x}$ denotes the Dirac measure at $\bar x$.

In the general - non characteristic - setting established below,
solutions to~\eqref{eq:68} are shown to converge as $\zeta^h$
converges to $\zeta$ in a suitable - strong - sense, to solutions to
\begin{equation}
  \label{eq:21}
  \!\!\!\!\!\!\!\!
  \left\{
    \begin{array}{@{}l@{}}
      \partial_t u
      +
      \partial_x f(\zeta,u)
      =
      \sum\limits_{\bar x \in \mathcal I (\zeta)}
      \Xi\left(\zeta (\bar x+), \zeta (\bar x-), u (\cdot,\bar x-)\right)
      \delta_{\bar x}
      +
      D^+_{v} \Xi (\zeta, \zeta, u ) \,
      \norma{\mu}
      \\
      u (0,x)
      =
      u_o (x) \,.
    \end{array}
  \right.
\end{equation}
The terms in the singular source term above are defined as
follows. Since $\zeta \in \BV (\reali; \reali^p)$, the right and left
limits $\zeta (\bar x+)$ and $\zeta (\bar x-)$ are well defined and
the distributional derivative $D\zeta$ can be split in a discrete part
and a non discrete one, which may contain a Cantor part:
\begin{equation}
  \label{eq:20}
  D\zeta
  =
  \sum_{\bar x \in \mathcal{I} (\zeta)} \left(\zeta (\bar x+) - \zeta (\bar x-)\right) \, \delta_{\bar x}
  +
  v \, \norma{\mu} \,,
\end{equation}
where the function $v$ is Borel measurable with norm $1$ and $\mu$ is
the non atomic part of $D\zeta$.  In~\eqref{eq:21} we also used the
(one sided) directional derivative
\begin{equation}
  \label{eq:26}
  D^+_{v} \Xi (z, z, u)
  =
  \lim_{t \to 0+}
  \dfrac{
    \Xi(z + t \, v, z, u)
    -
    \Xi(z, z, u)}{t} \,.
\end{equation}

A preliminary result was obtained in~\cite{ColomboGuerraHolle}, where
a sequence of solutions to~\eqref{eq:68} is shown to converge to a
solution to~\eqref{eq:21}. Here, we extend the framework
in~\cite{ColomboGuerraHolle} considering space dependent fluxes, prove
that~\eqref{eq:68} generates a Lipschitz semigroup, say $S^h$, and
show the convergence of $S^h$ to a semigroup whose orbits
solve~\eqref{eq:21}. Moreover, we provide a full characterization of
the solutions to~\eqref{eq:21} in terms of integral inequalities, in
the spirit of~\cite{Bressan}.

\medskip

The present results comprise the case of balance laws with a space
dependent flux and a non conservative source term of the type
\begin{equation}
  \label{eq:52}
  \partial_t u + \partial_x f (\zeta,u) = D_\zeta G (\zeta,u) \, D\zeta
\end{equation}
see~\cite[\S~3.4]{ColomboGuerraHolle}. Setting $p=2$,
$\mathcal{Z} = \mathopen]0, +\infty\mathclose[ \times \reali$ and
\begin{equation}
  \label{eq:77}
  \zeta (x) = \left[
    \begin{array}{c}
      \left.1 \middle/\sigma (x)\right.
      \\
      b (x)
    \end{array}
  \right]
  \quad \mbox{ and } \quad
  G \left(z,(a,q)\right)
  =
  \left[
    \begin{array}{c}
      0
      \\
      -\frac12 \, g \, a^2 \, z_1 - g \, a \, z_2
    \end{array}
  \right]
\end{equation}
we see that~\eqref{eq:53} fits into~\eqref{eq:52}:
\begin{equation}
  \label{eq:78}
  \left\{
    \begin{array}{l}
      \partial_t a + \partial_x q = 0
      \\
      \partial_t q + \partial_x \left( \dfrac{q^2}{a}
      + \dfrac12 \, g \, \zeta_1 \, a^2\right)
      =
      -\dfrac12 \, g\, a^2 \, \partial_x \zeta_1
      -
      g \, a \, \partial_x \zeta_2
    \end{array}
  \right.
\end{equation}
and hence our main result, Theorem~\ref{thm:sgrp}, applies setting,
for instance,
\begin{displaymath}
  \Xi(z^+, z^-, u^-) = G(z^+, u^-) - G(z^-,u^-) \,.
\end{displaymath}
As noted in~\cite[Section~3]{ColomboGuerraHolle}, different choices of
$\Xi$ may yield different solutions emanating from discontinuities in
$\zeta$ while giving the same solutions wherever $\zeta$ is smooth.

\medskip

Moreover, all the applications considered
in~\cite[Section~3]{ColomboGuerraHolle} fall within the scope of
Theorem~\ref{thm:sgrp}. They are the classical $p$-system, i.e.,
isentropic gas dynamics, in a pipe with varying section or with bends,
see also~\cite{MR2041456}, as well as the full Euler compressible
system in pipes, see also~\cite{MR2177892}.

Thus, in addition to the existence of solutions proved
in~\cite{ColomboGuerraHolle}, here we also ensure the Lipschitz
continuous dependence of the solutions on the initial data. Further,
we provide a characterization of the solutions by means of the
integral relations \emph{(i)} and~\emph{(ii)} in
Theorem~\ref{thm:sgrp}. These results hold under assumptions on the
source terms that are strictly weaker than those
in~\cite{AmadoriGosseGuerra2002}. Moreover, the present construction
encompasses fluxes explicitly depending on the space variable.

\section{Hypotheses and Main Theorem}
\label{sec:General}

Here, for a real number $x$, $\modulo{x}$ is its absolute value, while
$\norma{v}$ is the Euclidean norm of a vector $v$ and $\norma{\mu}$ is
the total variation of a measure $\mu$. The open ball in $\reali^n$
centered at $u$ with radius $\delta$ is denoted by $B(u;\delta)$, its
closure is $\overline{B (u; \delta)}$. We also use the following
notation for left/right limits and for differences at a point:
\begin{displaymath}
  F(x-)  =  \lim_{\xi\to x^{-}}F(\xi)\, ,\quad
  F(x+)  =  \lim_{\xi\to x^{+}}F(\xi)
  \quad \mbox{ and } \quad
  \Delta F(x)  =  F(x+)-F(x-) \,.
\end{displaymath}
Throughout, we choose the \emph{left}--continuous representatives of
$\BV$ functions.

The problem we tackle is defined by the flow $f$ and by the functions
$\Xi$ and $\zeta$. Here we detail the key assumptions, $\Omega$ being
an open convex subset of $\reali^n$ and $\mathcal{Z}$ a convex open
subset of $\reali^p$:

\begin{enumerate}[label={\bf{(f.\arabic*)}}]
\item \label{it:f1} $f\in \C4(\mathcal{Z}\times\Omega; \reali^n)$;
\item \label{it:f2} the Jacobian matrix $D_u f (z,u)$ is strictly
  hyperbolic for every $ z \in \mathcal{Z}$ and $u \in \Omega$;
\item \label{it:f3} each characteristic field is either genuinely
  nonlinear or linearly degenerate for all $ z \in \mathcal{Z}$.
\end{enumerate}
\noindent In the latter assumption we refer to the classical definitions
by Lax~\cite{Lax1957}, see also~\cite[\S~7.5]{Dafermos2000}.

By~\ref{it:f1} and~\ref{it:f2} we know that, possibly restricting
$\Omega$, the eigenvalues $\lambda_1(z,u)$, $\dots$ ,$\lambda_n(z,u)$
of $D_uf(z,u)$ depend smoothly on $z$ and can be indexed so that, for
all $u \in \Omega$ and $z \in \mathcal{Z}$,
\begin{displaymath}
  \lambda_1(z,u) < \lambda_2(z,u) < \dots < \lambda_n(z,u) \,.
\end{displaymath}
We thus require the usual non resonance condition
\begin{enumerate}[label={\bf{(f.4)}}]
\item \label{it:f4} there exists $i_o \in \{1,\dots,n-1\}$ such that
  $\lambda_{i_o}(z,u)<0<\lambda_{i_o+1}(z,u)$ for all
  $z \in \mathcal{Z}$ and all $u \in \Omega$.
\end{enumerate}
\noindent Note that both the cases of characteristic speeds being
either all positive or all negative are simpler.

On the function $\Xi$ in~\eqref{eq:DefPsiCoupling}, used to rewrite
the coupling condition induced by $\Psi$, we require:

\begin{enumerate}[label={\bf{($\mathbf{\Xi}$.\arabic*)}}]
\item \label{eq:Xi1}
  $\Xi \colon \mathcal{Z}\times\mathcal{Z}\to \C1(\overline{\Omega};
  \reali^n)$ is a Lipschitz continuous map and
  $\Xi \colon \mathcal{Z}\times\mathcal{Z}\to \C2(\overline{\Omega};
  \reali^n)$;
\item \label{eq:Xi2}
  $\sup_{z^+,z^-\in\mathcal Z}\norma{\Xi(z^+, z^-,
    \cdot)}_{\C{2}(\Omega; \reali)}< +\infty$;
\item \label{eq:Xi3} $\Xi(z, z, u) = 0$ for every $z \in \mathcal{Z}$
  and $u \in \Omega$;
\item \label{eq:Xi4} there exists a non decreasing map
  $\sigma \colon \mathopen[0, \bar t\mathclose[ \to \reali$ with
  $\lim_{t\to 0}\sigma(t)=0$ such that for all
  $(z,v,u) \in \mathcal{Z} \times \overline{B(0;1)} \times \Omega$
  \begin{displaymath}
    \norma{\Xi (z + t \, v, z, u) - D^+_v\Xi (z,z,u) \, t}
    \leq
    \sigma (t) \, t
  \end{displaymath}
  and moreover the map $(z,v,u) \to D^+_v \Xi (z,z,u)$ is Lipschitz
  continuous.
\end{enumerate}
\noindent In the latter condition, recall the definition~\eqref{eq:26}
of the Dini right derivative. Our requiring this low regularity,
i.e.~the mere existence of the Dini derivative rather than
differentiability, is motivated by the example of a pipe with angles,
where $\Xi$ depends on $\norma{z^+ - z^-}$,
see~\cite[Section~3.1]{ColomboGuerraHolle}.

In Problem~\eqref{eq:21} we require that
$\zeta \in \BV (\reali; \mathcal{Z})$. Throughout, the map $\zeta$ is
assumed to be left continuous and the set of jump discontinuities in
$\zeta$ is denoted by $\mathcal{I} (\zeta)$, with
$\mathcal{I} (\zeta) \subset \reali$.

We now precisely state what we mean by \emph{solution}
to~\eqref{eq:21}.

\begin{definition}
  \label{def:sol}
  Let $u_o \in \Lloc1 (\reali; \reali^n)$.  A map
  $u \in \C0 (\mathopen[0, +\infty\mathclose[; \Lloc1 (\reali;
  \reali^n))$ with $u (t) \in \BV (\reali; \reali^n)$ and left
  continuous for all $t \in \reali_+$, is a \emph{solution}
  to~\eqref{eq:21} if for all test functions
  $\phi \in \Cc1 (\mathopen]0, +\infty\mathclose[ \times \reali;
  \reali)$,
  \begin{eqnarray}
    \nonumber
    &
    & -\int_0^{+\infty} \int_{\reali}
      \left(
      u (t,x) \, \partial_t \phi (t,x)
      +
      f\left(\zeta (x), u (t,x)\right) \, \partial_x \phi (t,x)
      \right)
      \d{x} \d{t}
    \\
    \label{eq:48}
    & =
    & \sum_{{\bar x} \in \mathcal{I} (\zeta)}
      \int_0^{+\infty}
      \Xi\left(\zeta ({\bar x}+), \zeta ({\bar x}), u (t, {\bar x})\right)
      \phi (t,{\bar x}) \d{t}
    \\
    \nonumber
    &
    & +
      \int_0^{+\infty} \int_{\reali}
      D_{v (x)}^+ \Xi\left(\zeta (x), \zeta (x), u (t,x)\right)
      \phi (t,x) \, \d{\norma{\mu}}(x)\, \d{t}
  \end{eqnarray}
  where $v$, $\mu$ are as in~\eqref{eq:20}, and moreover
  $u (0) = u_o$.
\end{definition}

In the last integral in~\eqref{eq:48}, the integrand is Borel
measurable in $(t,x)$ since, for instance, by the above assumptions on
$u$, we have at \emph{every} $(t,x) \in \reali_+ \times \reali$
\begin{displaymath}
  u (t,x) = \lim_{h\to 0} \dfrac{1}{h}\int_{x-h}^x u (t,y) \, \d{y} \,.
\end{displaymath}
Moreover, Borel measurability on $\reali^2$ ensures measurability with
respect to the product measure.

Note that the value of the integrand in the first line
in~\eqref{eq:48} is independent of changes of the integrand on sets of
\emph{Lebesgue} measure $0$ in $\reali^2$, while the latter integrand
is integrated with respect to the product measure
$\norma{\mu} \otimes \d{t}$. Nevertheless, \eqref{eq:48} is
meaningful, since $u$ is prescribed \emph{pointwise}, at \emph{every}
point and not merely almost everywhere.

The above definition is known not to guarantee
uniqueness. Nevertheless, Theorem~\ref{thm:sgrp} below does guarantee
uniqueness, relying on an extension to the case of~\eqref{eq:21} the
precise characterization originally provided in~\cite{Bressan} for
homogeneous systems of conservation laws.

\begin{definition}
  \label{def:GenRP}
  By \emph{Generalized Riemann Problem} we mean the Cauchy
  Problem~(\ref{eq:21}) with $\zeta$ and the initial datum $u_o$ as
  follows:
  \begin{equation}
    \label{eq:25}
    \zeta (x)
    =
    z^- \, \caratt{\left]-\infty, 0\right[} (x)
    +
    z^+ \, \caratt{\left]0, +\infty\right[} (x)
    \quad \mbox{ and } \quad
    u_o (x)
    =
    u^\ell \, \caratt{\left]-\infty, 0\right[} (x)
    +
    u^r \, \caratt{\left]0, +\infty\right[} (x) \,.
  \end{equation}
  For $z \in \mathcal{Z}$ and $u \in \Omega$, call
  $\sigma_i \to H_i (z,\sigma_i) (u)$ the Lax curve of the $i$--th
  family w.r.t.~$f(z,\cdot)$ exiting $u$,
  see~\cite[\S~5.2]{Bressan2000} or~\cite[\S~9.3]{Dafermos2000}. For
  $\pmb\sigma \equiv (\sigma_1, \ldots, \sigma_n)$, we use below the
  notation
  \begin{equation}
    \label{eq:6}
    H (z,\pmb\sigma)
     =
    H_n (z,\sigma_n) \circ H_{n-1} (z,\sigma_{n-1})
    \circ \cdots \circ
    H_2 (z,\sigma_2) \circ H_1 (z, \sigma_1) \, (u) \,.
  \end{equation}
  Introduce recursively the states $w_0, \ldots, w_{n+1} \in \Omega$
  with $w_0 = u^\ell,\, w_{n+1} = u^r$ and
  \begin{equation}
    \label{eq:2}
    \left\{
      \begin{array}{l@{\qquad}r@{\,}c@{\,}l}
        w_{i+1}
        =
        H_{i+1}(z^+,\sigma_{i+1})(w_i)
        & \mbox{ if }i
        & =
        & 0, \ldots, i_o-1,
        \\
        f (z^+,w_{i_o+1}) - f (z^-,w_{i_o})
        =
        \Xi (z^+,z^-,w_{i_o})
        \\
        w_{i+1}
        =
        H_{i}(z^-,\sigma_{i})(w_i)
        & \mbox{ if }i
        & =
        &i_o+1, \ldots, n\,.
      \end{array}
    \right.
  \end{equation}
  We thus define as \emph{Admissible Solution} to the Generalized
  Riemann Problem~\eqref{eq:21}--\eqref{eq:25} the gluing along
  $x = 0$ of the Lax solutions to the (standard) Riemann Problems
  \begin{displaymath}
    \left\{
      \begin{array}{l}
        \partial_t u + \partial_x f (z^-,u) = 0
        \\
        u (0,x)
        = u^\ell \caratt{\left]-\infty, 0\right[} (x)
        +
        w_{i_o} \caratt{\left]0, +\infty\right[} (x),
      \end{array}
    \right.
    \quad
    \left\{
      \begin{array}{l@{}}
        \partial_t u + \partial_x f (z^+,u) = 0
        \\
        u (0,x)
        = w_{i_o+1} \caratt{\left]-\infty, 0\right[} (x)
        +
        u^r \caratt{\left]0, +\infty\right[} (x).
      \end{array}
    \right.
  \end{displaymath}
\end{definition}
\noindent Throughout, we refer to the stationary jump discontinuities
due to jumps in $z$ as to \emph{zero waves}.

Below, Lemma~\ref{lem:RP} ensures that, with the above definition, the
Generalized Riemann Problem~(\ref{eq:21})--\eqref{eq:25} turns out to
be well posed.

\bigskip

Aiming at the characterization of solutions to~\eqref{eq:21}, we now
extend to the present case the general definitions introduced
in~\cite{Bressan}, see also~\cite[Chapter~9]{Bressan2000}. Fix
$\zeta \in \BV (\reali; \mathcal{Z})$ and a function $u = u (t,x)$
with $u (t) \in \BV (\reali; \Omega)$ for all $t$ and a point
$(\tau,\xi)\in \mathopen[0,+\infty\mathclose[\times\reali$. Define the
function $U^\sharp_{(u;\tau,\xi)}$ as the solution to the generalized
Riemann Problem
\begin{equation}
  \label{eq:64}
  \left\{
    \begin{array}{l}
      \partial_t U + \partial_x f\left(\zeta (\xi), U\right) =
      \Xi\left(\zeta (\xi+), \zeta (\xi), u (t,\xi-)\right) \,
      \delta_{\xi}
      \\
      U (0,x)
      =
      \left\{
      \begin{array}{ll}
        u (\tau, \xi-)
        & x < \xi \,;
        \\
        u (\tau, \xi+)
        & x > \xi \,.
      \end{array}
          \right.
    \end{array}
  \right.
\end{equation}
Note that if $\xi \not\in\mathcal{I} (\zeta)$, then the right hand
side in~\eqref{eq:64} vanishes due to~\ref{eq:Xi2} and the above
definition of $U^\sharp_{(u;\tau,\xi)}$ reduces to the classical one
in~\cite[Chapter~9]{Bressan} related to the homogeneous flow
$u \to f\left(\zeta (\xi),u\right)$.

We define the function $U^\flat_{(u;\tau,\xi)}$ as the solution to the
following linear hyperbolic problem with constant coefficients and
measure-valued source term
\begin{equation}
  \label{eq:HelpC}
  \left\{
    \begin{array}{l}
      \partial_t U + A\; \partial_x U = g
      \\
      U(0,x) = u(\tau,x)
    \end{array}
  \right.
\end{equation}
with $A = D_u f\left(\zeta(\xi),u(\tau,\xi)\right)$ and for any Borel
subset $E$ of $\reali$,
\begin{equation}
  \label{eq:57}
  \!\!\!
  \begin{array}{@{}rcl@{}}
    g(E)
    &  =
    & \displaystyle
      \sum_{{\bar x} \in \mathcal{I} (\zeta)}
      \left(
      \Xi\left(\zeta ({\bar x}+), \zeta ({\bar x}), u (\tau, \xi)\right)-f\left(\zeta ({\bar x}+),u (\tau, \xi)\right)+f\left(\zeta ({\bar x}),u (\tau, \xi)\right)\right)
      \delta_{{\bar x}}(E)
    \\
    &
    &  \displaystyle
      \; +
      \int_E
      \left(D_{v (x)}^+ \Xi\left(\zeta (x), \zeta (x), u (\tau,\xi)\right)
      - D_z f\left(\zeta(x),u(\tau,\xi)\right) \, v (x)\right) \d{\norma{\mu} (x)}.
  \end{array}
  \!\!
\end{equation}
where we used the same notation as in~\eqref{eq:20} and~\eqref{eq:26}.

We are now ready to state the main result of this work.

\begin{theorem}
  \label{thm:sgrp}
  Let $f$ satisfy~\ref{it:f1}--\ref{it:f4}, $\Xi$
  satisfy~\ref{eq:Xi1}--\ref{eq:Xi4}. Fix $\bar z \in \mathcal{Z}$,
  $\bar u \in \Omega$. Then, there exist positive $\delta$ and $L$
  such that for any $\zeta \in \BV (\reali; \mathcal{Z})$ with
  $\tv (\zeta) < \delta$ and $\norma{\zeta (x) - \bar z} < \delta$
  there exists a domain
  $\mathcal{D}^\zeta \subseteq \bar u + \L1 (\reali; \Omega)$
  containing all functions $u$ in $\bar u + \L1 (\reali; \Omega)$ with
  $\tv (u) < \delta$ and a semigroup
  $S^\zeta \colon \reali_+ \times \mathcal{D}^\zeta \to
  \mathcal{D}^\zeta$ such that
  \begin{enumerate}
  \item For all $u_o \in \mathcal{D}^\zeta$, the orbit
    $t \to S^\zeta_t u_o$ solves \eqref{eq:21} in the sense of
    Definition~\ref{def:sol}.

  \item $S^\zeta$ is $\L1$--Lipschitz continuous, i.e.~for all
    $u_o, u_o^1, u_o^2 \in \mathcal{D}^\zeta$ and for all
    $t,t_1,t_2 \in \reali_+$
    \begin{eqnarray*}
      \norma{S^\zeta_t u_o^1 - S^\zeta_t u_o^2}_{\L1 (\reali; \reali^n)}
      &\leq
      & L \; \norma{u_o^1 - u_o^2}_{\L1 (\reali; \reali^n)} \,;
      \\
      \norma{S^\zeta_{t_1} u_o - S^\zeta_{t_2} u_o}_{\L1 (\reali; \reali^n)}
      &\leq
      & L \; \modulo{t_1 - t_2} \,.
    \end{eqnarray*}

  \item If $\zeta \in \PC (\reali; \mathcal{Z})$ and
    $u_o \in \PC (\reali; \Omega)$, then for $t$ sufficiently small,
    the map $(t,x) \to (S^\zeta_t u_o) (x)$ coincides with the gluing
    of Admissible Solutions, in the sense of
    Definition~\eqref{def:GenRP}, to Generalized Riemann Problems at
    the points of jumps of $u_o$ and of $\zeta$.

    \end{enumerate}
    Moreover, let $\hat\lambda$ be an upper bound for the (moduli of)
    characteristic speeds and define $u (t,x) = (S^\zeta_t u_o)
    (x)$. Then, for every $(\tau, \xi) \in \reali_+ \times \reali$,
    \begin{enumerate}[label=(\roman*)]
    \item
      \begin{displaymath}
        \lim_{\theta\to 0}
        \frac{1}{\theta}
        \int_{\xi-\theta\hat\lambda}^{\xi+\theta\hat\lambda}
        \modulo{u(\tau+\theta,x) - U^\sharp_{(u;\tau,\xi)}(\theta,x)} \d x
        =
        0 \,.
      \end{displaymath}
    \item There exists a constant $C$ such that for every
      $a,b \in \reali$ with $a < \xi < b$ and for every
      $\theta \in \mathopen]0, (b-a)/(2\hat\lambda)\mathclose[$,
      \begin{displaymath}
        \frac{1}{\theta}
        \int_{a+\theta\hat\lambda}^{b-\theta\hat\lambda}
        \modulo{u(\tau+\theta,x)-U^\flat_{(u;\tau,\xi)}(\theta,x)} \d x
        \leq
        C\left[\tv\left(u(\tau), \mathopen]a,b\mathclose[\right)
          +
          \tv\left(\zeta, \mathopen]a,b\mathclose[\right)
        \right]^2.
      \end{displaymath}
    \end{enumerate}
    If $u\colon [0,T]\to \mathcal{D^\zeta}$ is $\L1$--Lipschitz
    continuous and satisfies~(i) and~(ii) for almost every time $\tau$
    and for all $\xi \in \reali$, then $t \to u(t,\cdot)$ coincides
    with an orbit of the semigroup $S^\zeta$.
  \end{theorem}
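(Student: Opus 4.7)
The plan is to build the semigroup $S^\zeta$ in two stages and then establish the characterization (i)--(ii) together with uniqueness by adapting the integral technique of~\cite{Bressan}.

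First I would treat the case where $\zeta = \zeta^h$ is piecewise constant with finitely many jumps. I would construct $S^{\zeta^h}$ by a wave-front tracking algorithm in which, at each jump point $\bar x\in\mathcal I(\zeta^h)$, a stationary \emph{zero-wave} joining states $w_{i_o}$ and $w_{i_o+1}$ as in~(\ref{eq:2}) is inserted, while elsewhere one uses the standard accurate/simplified Riemann solver for the frozen flux $u\mapsto f(\zeta^h(x),u)$. Well-posedness of each local Riemann problem is guaranteed by Lemma~\ref{lem:RP}. A Glimm-type interaction potential, augmented to count interactions between classical fronts and zero-waves (this is where~\ref{eq:Xi2}, \ref{eq:Xi3} and the smallness $\tv(\zeta^h)<\delta$ enter), gives uniform BV bounds; the Bressan--Liu--Yang weighted $\L1$ functional then yields a Lipschitz constant $L$ independent of $h$. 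This produces a Lipschitz semigroup $S^{\zeta^h}$ whose orbits solve~(\ref{eq:68}), and for piecewise constant initial data the construction coincides, for short times, with the gluing of admissible solutions to the Generalized Riemann Problems at each jump of $u_o$ and of $\zeta^h$; this already yields~(3).

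For general $\zeta\in\BV(\reali;\mathcal Z)$ with $\tv(\zeta)<\delta$, I would pick piecewise constant approximations $\zeta^h\to\zeta$ with $\tv(\zeta^h)\le\tv(\zeta)$ and apply the convergence result of~\cite{ColomboGuerraHolle}: the orbits $t\mapsto S^{\zeta^h}_t u_o$ converge in $\Lloc1$ to a map $u$ satisfying Definition~\ref{def:sol}. Since $L$ is uniform in $h$, the limit inherits the Lipschitz estimate, and the semigroup property passes to the limit along equicontinuous orbits; this defines $S^\zeta$ and proves~(1) and~(2). For (i), at a fixed $(\tau,\xi)$ the local structure of $u$ in the cone $|x-\xi|\le\theta\hat\lambda$ is, up to $o(\theta)$ in $\L1$-average, dictated by the jump of $\zeta$ at $\xi$ and by the traces $u(\tau,\xi\pm)$, i.e.~by $U^\sharp_{(u;\tau,\xi)}$ defined in~(\ref{eq:64}); the continuous part $\|\mu\|$ of $D\zeta$ contributes only $O(\theta)\,\|\mu\|(\mathopen]\xi-\theta\hat\lambda,\xi+\theta\hat\lambda\mathclose[)=o(\theta)$ because $\|\mu\|$ is non atomic, and the corresponding source is Lipschitz in $u$ by~\ref{eq:Xi4}. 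For (ii), one compares $u$ with the solution $U^\flat_{(u;\tau,\xi)}$ of the linearization~(\ref{eq:HelpC}) with source~(\ref{eq:57}); the difference satisfies a linear hyperbolic equation whose forcing is the quadratic Glimm-type interaction between classical waves of $u(\tau,\cdot)$ and infinitesimal zero-waves carried by $D\zeta$, both restricted to $\mathopen]a,b\mathclose[$, yielding the bound $C[\tv(u(\tau);\mathopen]a,b\mathclose[)+\tv(\zeta;\mathopen]a,b\mathclose[)]^2$ uniformly from the front-tracking level.

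Finally, for the uniqueness statement, I would follow Bressan's strategy: given an $\L1$-Lipschitz $u\colon[0,T]\to\mathcal D^\zeta$ satisfying (i)--(ii) for a.e.~$\tau$, I would set $\Phi(\tau):=\norma{u(\tau)-S^\zeta_\tau u_o}_{\L1(\reali;\reali^n)}$ and estimate its upper Dini derivative by partitioning $\reali$ into intervals on which either (i) (at shocks or at jumps of $\zeta$) or (ii) (on smooth portions) provides the local comparison; the sum of the local errors is $o(1)$ in~(i) and quadratic, hence summable and vanishing in the limit of finer partitions, in~(ii). Together with $\Phi(0)=0$ this forces $\Phi\equiv 0$. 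I expect the main obstacle to be precisely the bookkeeping in this last step when $D\zeta$ carries a Cantor part: the source in~(\ref{eq:21}) is then supported on a $\|\mu\|\otimes\d t$-null but Lebesgue negligible set, and one has to exploit the modulus $\sigma$ in~\ref{eq:Xi4}, the Lipschitz dependence of $D^+_v\Xi$ on $(z,v,u)$, and the left-continuous pointwise representative chosen for $u(t,\cdot)$ to ensure that the error arising from (i) is integrable against $\|\mu\|$ and that the estimate in (ii) closes consistently with the measure-valued source.
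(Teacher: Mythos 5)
Your plan follows the paper's own architecture quite closely: build $S^{\zeta^h}$ for piecewise constant $\zeta^h$ by wave-front tracking with stationary zero-waves at the jumps of $\zeta^h$, control $\tv$ and $\L1$-stability by a Glimm potential and a Bressan--Liu--Yang weighted functional with weights seeing the zero-waves, pass to the limit $\zeta^h\to\zeta$ in $\BV$ to get $S^\zeta$, and then prove uniqueness by Bressan's integral-error estimate and a partition of $\reali$ into small-oscillation intervals where either \emph{(i)} or \emph{(ii)} applies. This is the paper's route, so no genuinely different idea here.

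Where you are substantially vaguer than what is actually needed is in the passage from the piecewise constant case to the general $\BV$ case in items \emph{(i)} and \emph{(ii)}. In the paper, \emph{(i)} is \emph{not} obtained by directly arguing that the non-atomic part of $D\zeta$ near $\xi$ is negligible; it is obtained by the triangle inequality $S^\zeta_\theta - S^{\zeta^h}_\theta$, $S^{\zeta^h}_\theta - U^{\sharp h}$, $U^{\sharp h} - U^\sharp$, with the middle term uniform in $h$ via the piecewise constant estimate~\eqref{eq:38} and the last term handled by Lemma~\ref{lem:zetaPbR}. More significantly, \emph{(ii)} for general $\zeta$ requires showing that the linear solution $U^{\flat h}$ built from the purely atomic measure $g^h$ converges to $U^\flat$ built from the measure $g$ in~\eqref{eq:57}, which involves the Cantor and absolutely continuous parts of $D\zeta$. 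This is the content of the error decomposition $\mathcal E_1,\dots,\mathcal E_6$ and the Lusin-type convergence claim~\eqref{eq:63} for the direction fields $v^h\to v$ in $\L1(\norma{\mu})$, and it is precisely where the Dini-derivative hypothesis~\ref{eq:Xi4} (with its modulus $\sigma$) is used, via Lemma~\ref{lem:uno}. Your proposal acknowledges that the Cantor part is the hard spot but locates the difficulty in the final uniqueness step rather than in establishing \emph{(ii)} itself; as written it does not contain the approximation $g^h\to g$ and the $\L1(\norma{\mu})$ convergence of $v^h$ that make the estimate~\emph{(ii)} close for a general $\BV$ coefficient. That is the one nontrivial ingredient you would have to supply to turn the sketch into a proof.

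A minor further point: one must also verify that the limiting semigroup maps its domain into itself. The paper handles this by defining $\mathcal{D}^\zeta$ as the forward-reachable set from $\check{\mathcal D}^\zeta_\delta$ inside $\hat{\mathcal D}^\zeta_\delta$, see~\eqref{eq:22}--\eqref{eq:27}; your sketch does not discuss the invariance of the domain.
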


  Note that whenever $\zeta$ is piecewise constant, the properties~1.,
  2.~and~3.~above uniquely characterize the semigroup $S^\zeta$, see
  Lemma~\ref{lem:123}.

  \section{Proofs}
  \label{sec:assumpt-main-results}

  Below, $\O$ denotes a constant depending exclusively on $f$, $\Xi$
  and on a neighborhood of $\bar u$. By $\hat\lambda$ we denote an
  upper bound for (the moduli) of characteristic speeds.

  \subsection{Preliminary Results}
  \label{sub:GerneralSingle}

  First, we recall a Lipschitz-type estimate on the map $\Xi$, of use
  throughout this paper.

  \begin{lemma}[{\cite[Lemma~4.3]{ColomboGuerraHolle}}]
    \label{lem:EstimateXi2}
    Let $W \subset \reali^m$ be non empty, open, bounded and
    convex. Let
    $\phi \colon \mathcal{Z} \times \mathcal{Z} \to \C1 (\overline{W};
    \reali^n)$ be Lipschitz continuous and such that $\phi (z,z,w)=0$
    for every $z \in \mathcal{Z}$ and $w \in W$. Then,
    \begin{equation}
      \label{eq:lipXi}
      \begin{array}{rcl}
        \norma{\phi (z^+,z^-,w)}
        & \leq
        & \O \, \norma{z^+ - z^-}
        \\
        \norma{\phi (z^{+},z^{-},w_{2}) - \phi (z^+,z^{-},w_{1})}
        & \leq
        & \O \, \norma{z^{+}-z^{-}} \, \norma{w_{2}-w_{1}} \, .
      \end{array}
    \end{equation}
  \end{lemma}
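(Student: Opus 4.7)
The plan is to exploit the two pieces of structure of $\phi$: its Lipschitz continuity as a map $\mathcal{Z}\times\mathcal{Z} \to \C1(\overline{W};\reali^n)$, and the vanishing condition $\phi(z,z,w) = 0$. Both inequalities follow by ``differencing in the first $z$ slot against the second'' so as to produce the factor $\norma{z^+ - z^-}$.

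For the first estimate, I would simply write
\begin{displaymath}
  \phi(z^+, z^-, w)
  =
  \phi(z^+, z^-, w) - \phi(z^-, z^-, w),
\end{displaymath}
using assumption $\phi(z^-,z^-,\cdot) = 0$. The Lipschitz continuity of $\phi$ as a map into $\C1(\overline W;\reali^n)$ gives a constant $L$ such that $\norma{\phi(z^+,z^-,\cdot) - \phi(z^-,z^-,\cdot)}_{\C1} \leq L\,\norma{z^+-z^-}$, and dominating the pointwise norm by the $\C0$ (hence $\C1$) norm yields the bound $\O \norma{z^+-z^-}$.

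For the second estimate, the key observation is that since $\phi(z,z,w) = 0$ for every $w \in W$, also $D_w \phi(z,z,w) = 0$ for every $w$. Then for fixed $z^+, z^-$ I would apply the fundamental theorem of calculus along the segment from $w_1$ to $w_2$ (which lies in $W$ by convexity):
\begin{displaymath}
  \phi(z^+,z^-,w_2) - \phi(z^+,z^-,w_1)
  =
  \int_0^1 D_w \phi\bigl(z^+,z^-,w_1 + s(w_2-w_1)\bigr)\,(w_2-w_1)\, \d{s}.
\end{displaymath}
To bound the integrand, I use the same differencing trick a second time:
\begin{displaymath}
  D_w\phi(z^+,z^-,w)
  =
  D_w\phi(z^+,z^-,w) - D_w\phi(z^-,z^-,w),
\end{displaymath}
and the Lipschitz continuity into $\C1$ controls precisely this difference (in the $\C0$ norm in $w$) by $L\,\norma{z^+ - z^-}$. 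Taking the supremum over $s \in [0,1]$ and multiplying by $\norma{w_2-w_1}$ gives the claim.

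The proof is essentially routine; the only point that requires care is interpreting what ``Lipschitz into $\C1(\overline W;\reali^n)$'' buys us, namely Lipschitz control of the $\C0$ norm of $D_w\phi$ with respect to $(z^+,z^-)$. Once that is clear, the zero condition $\phi(z,z,\cdot)=0$ converts each appearance of $\phi$ or $D_w\phi$ into a difference that is already small of order $\norma{z^+-z^-}$. There is no genuine obstacle beyond bookkeeping.
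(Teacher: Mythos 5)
Your proposal is correct and follows essentially the same route as the paper's proof: both steps use the same differencing against $\phi(z^-,z^-,\cdot)$, and the second estimate uses the same observation $D_w\phi(z,z,\cdot)=0$ together with the fundamental theorem of calculus along the segment joining $w_1$ and $w_2$.
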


\begin{proof}
  Since $\phi (z^-,z^-,w)=0$, we have
  $\norma{\phi (z^+,z^-,w)} = \norma{\phi (z^+,z^-,w)- \phi
    (z^-,z^-,w)}$ and the first inequality in~\eqref{eq:lipXi} follows
  by the global Lipschitz continuity of $\phi$ with respect to the $z$
  variables.

  Observe that $D_w \phi (z^-,z^-,w)=0$. Hence, using again the
  Lipschitz continuity of $\phi$,
  \begin{eqnarray*}
    &
    &
      \norma{\phi (z^{+},z^{-},w_{2}) - \phi (z^+,z^{-},w_{1})}
    \\
    & =
    &\norma{
      \int_0^1 D_w \phi \left(z^+,z^-,w_2+\varsigma (w_1-w_2)\right) (w_1-w_2)
      \d\varsigma
      }
    \\
    & =
    & \norma{
      \int_0^1 \!
      \left[
      D_w \phi \left(z^+,z^-,w_2+\varsigma (w_1-w_2)\right)
      {-}
      D_w \phi \left(z^-,z^-,w_2+\varsigma (w_1-w_2)\right)
      \right]
      (w_1-w_2)
      \d\varsigma
      }
    \\
    & \leq
    & \O \, \norma{z^{+}-z^{-}} \, \norma{w_{2}-w_{1}} \, .
  \end{eqnarray*}
\end{proof}

Note that~\ref{eq:Xi1} and~\ref{eq:Xi3} are stronger than the
assumptions in Lemma~\ref{lem:EstimateXi2}, so that $\Xi$
satisfies~\eqref{eq:lipXi}.

Introduce a map $T$ related to the generalized Riemann Problem.

\begin{lemma}
  \label{lem:ExistenceT}
  Let $f$ satisfy~\ref{it:f1}--\ref{it:f4} and $\Xi$
  satisfy~\ref{eq:Xi1}, \ref{eq:Xi3}. Then, for any
  $\bar z \in \mathcal{Z}$ and $\bar u \in \Omega$, there exists
  $\delta > 0$ and a Lipschitz map
  $T \colon B(\bar z;\delta)^2 \to \C2\left( B(\bar u; \delta);
    \Omega\right)$ such that
  \begin{equation}
    \label{eq:1}
    \begin{cases}
      f (z^+,u^+) - f (z^-,u^-) = \Xi (z^+,z^-,u^-)
      \\
      z^+, z^-\in B(\bar z;\delta)
      \\
      u^+,u^-\in B(\bar u;\delta)
    \end{cases}
    \Longleftrightarrow \quad u^+=T(z^+, z^-)(u^-) \, .
  \end{equation}
  Furthermore,
  \begin{enumerate}
  \item $T (z,z) (u) = u$ and the map
    $(z^+,z^-,u) \to T (z^+,z^-) (u)-u$ satisfies the assumptions of
    Lemma~\ref{lem:EstimateXi2}.
  \item The following expansion holds:
    \begin{eqnarray*}
      &
      & f (z^+,u^*)- f (z^-, u^*) - \Xi (z^+,z^-,u^*)
        + D_u f (z^*,u^*)
        \left(T (z^+,z^-) (u) - u\right)
      \\
      & =
      & \O \, \norma{z^+-z^-}
        \left(
        \norma{z^+-z^*} + \norma{z^+-z^-}+\norma{u-u^*}
        \right)
    \end{eqnarray*}
  \end{enumerate}
\end{lemma}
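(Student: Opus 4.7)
The plan is to build $T$ as the graph of the implicit function associated with the equation
$F(u^+;z^+,z^-,u^-) := f(z^+,u^+) - f(z^-,u^-) - \Xi(z^+,z^-,u^-) = 0$
near the base point $(\bar u; \bar z, \bar z, \bar u)$. First I would check the hypotheses of the implicit function theorem: at the base point, $F$ vanishes by \ref{eq:Xi3}, and the partial Jacobian $D_{u^+} F = D_u f(\bar z, \bar u)$ is invertible because \ref{it:f2} makes it diagonalizable and \ref{it:f4} excludes zero among its eigenvalues. Since $F$ is Lipschitz in $(z^+, z^-)$ (by \ref{eq:Xi1} and the smoothness of $f$) and $\C2$ in the remaining variables (by \ref{it:f1} and the $\C2$ structure of $\Xi$ in $u$), the IFT produces a map $T(z^+,z^-)(u^-)$ on a ball $B(\bar z;\delta)^2 \times B(\bar u;\delta)$, Lipschitz in the $z$--arguments and $\C2$ in $u^-$, satisfying the equivalence~\eqref{eq:1}.

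Next I would verify part 1. The identity $T(z,z)(u) = u$ follows by uniqueness in the IFT together with \ref{eq:Xi3}, which makes $u^+ = u$ a solution when $z^+ = z^-$. Setting $\phi(z^+,z^-,u) := T(z^+,z^-)(u) - u$, the map $\phi$ is $\C1$ in $u$, Lipschitz in $(z^+,z^-)$, and vanishes whenever $z^+ = z^-$; these are precisely the hypotheses of Lemma~\ref{lem:EstimateXi2}, so the two estimates~\eqref{eq:lipXi} are inherited by $\phi$. In particular I record, for later use, $\norma{T(z^+,z^-)(u)-u} = \O\,\norma{z^+-z^-}$ and $\norma{\phi(z^+,z^-,u)-\phi(z^+,z^-,u^*)} = \O\,\norma{z^+-z^-}\,\norma{u-u^*}$.

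For the expansion in part 2, write $u^+ := T(z^+,z^-)(u)$ and $u^+_* := T(z^+,z^-)(u^*)$. The crucial trick is that the defining identity evaluated at $u^*$ gives $f(z^+,u^+_*) - f(z^-,u^*) - \Xi(z^+,z^-,u^*) = 0$, which I subtract from the target expression to obtain
$\text{LHS} = \bigl[f(z^+,u^*) - f(z^+,u^+_*)\bigr] + D_u f(z^*,u^*)(u^+-u)$.
A first-order Taylor expansion of $f(z^+,\cdot)$ around $u^*$, combined with $\norma{u^+_*-u^*} = \O\,\norma{z^+-z^-}$, converts the first bracket into $-D_u f(z^+,u^*)(u^+_*-u^*) + \O\,\norma{z^+-z^-}^2$. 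Writing $u^+-u = (u^+_*-u^*) + \bigl(\phi(z^+,z^-,u) - \phi(z^+,z^-,u^*)\bigr)$ and using the second estimate for $\phi$ to absorb the second summand into $\O\,\norma{z^+-z^-}\,\norma{u-u^*}$, the remaining principal contribution collapses to $[D_u f(z^*,u^*) - D_u f(z^+,u^*)](u^+_*-u^*)$, bounded by $\O\,\norma{z^+-z^*}\,\norma{z^+-z^-}$ thanks to smoothness of $f$. Collecting these pieces yields precisely the advertised bound.

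I expect the main obstacle to be bookkeeping the regularity of $T$ carefully enough to justify both the $\C2$--valued Lipschitz statement and the Taylor expansion with controlled remainder: the hypotheses on $\Xi$ mix Lipschitz dependence in $z$ with $\C2$ dependence in $u$, so one must verify that the IFT transmits this mixed regularity to $T$ without loss. Once this is established, the expansion itself is a short calculation orchestrated around the cancellation obtained by evaluating the defining identity at $u^*$.
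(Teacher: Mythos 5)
Your proposal is correct and follows essentially the same route as the paper: the paper constructs $T$ explicitly as $T(z^+,z^-)(u^-)=\phi\bigl(z^+,f(z^-,u^-)+\Xi(z^+,z^-,u^-)\bigr)$ via the local inverse $\phi$ of $u\mapsto f(z,u)$, which is the implicit function theorem you invoke in disguise, and your decomposition in part 2 (Taylor remainder of $f(z^+,\cdot)$, swap $D_u f(z^+,u^*)\to D_u f(z^*,u^*)$, then $u^+-u$ versus $u^+_*-u^*$ via the $\phi$-estimate) reproduces the paper's three error terms $\mathcal{E}_1,\mathcal{E}_2,\mathcal{E}_3$ exactly. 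A small point in your favour: you explicitly cite \ref{it:f4} to rule out a zero eigenvalue of $D_u f$, whereas the paper's proof attributes invertibility only to \ref{it:f1}--\ref{it:f2}, which by themselves do not exclude $0$ from the spectrum.
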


\begin{proof}
  (This Lemma is an extension of~\cite[Lemma~4.4]{ColomboGuerraHolle}
  to the case $f$ dependent on $z$, too.)

  Since $\bar u \in \Omega$, \ref{it:f1} and~\ref{it:f2} ensure that
  the function $u\to f(z,u)$ has a local $\C2$ inverse $\phi$, in the
  sense that $\phi\left(z, f (z,u)\right)=u$, for $z$ sufficiently
  close to $\bar z$. Define
  \begin{equation}
    \label{eq:defT}
    T (z^+, z^-)(u^{-})
     =
    \phi\left(z^+,f(z^{-},u^{-}) + \Xi(z^{+},z^{-},u^{-})\right).
  \end{equation}
  $T$ enjoys the required Lipschitz regularity and moreover
  \begin{displaymath}
    T (z,z) (u)
    =
    \phi\left(z,f(z,u) + \Xi(z,z,u)\right)
    =
    \phi\left(z,f(z,u)\right)
    =
    u \,.
  \end{displaymath}
  To prove~2., rewrite
  \begin{equation}
    \label{eq:49}
    f (z^+,u^*)- f (z^-, u^*) - \Xi (z^+,z^-,u^*)
    + D_u f (z^*,u^*)
    \left(T (z^+,z^-) (u) - u\right)
    =
    \mathcal{E}_1+\mathcal{E}_2+\mathcal{E}_3
  \end{equation}
  where we used the definition of $T$ and set
  \begin{eqnarray*}
    \mathcal{E}_1
    & =
    & f (z^+,u^*) - f\left(z^+, T (z^+,z^-) (u^*)\right)
      + D_u f (z^+,u^*)\left(T (z^+,z^-) (u^*)-u^*\right)
    \\
    \mathcal{E}_2
    & =
    & -D_u f (z^+,u^*)\left(T (z^+,z^-) (u^*)-u^*\right)
      +D_u f (z^*,u^*)\left(T (z^+,z^-) (u^*)-u^*\right)
    \\
    \mathcal{E}_3
    & =
    & - D_u f (z^*,u^*) \left(T (z^+,z^-) (u^*)-u^*\right)
      + D_u f (z^*,u^*) \left(T (z^+,z^-) (u) - u\right)
  \end{eqnarray*}
  By a Taylor expansion, we have:
  \begin{eqnarray*}
    \norma{\mathcal{E}_1}
    & \leq
    & \O \, \norma{T (z^+,z^-)(u^*) - u^*}^2
    \\
    & =
    & \O \, \norma{z^+ - z^-}^2 \,.
  \end{eqnarray*}
  Concerning $\mathcal{E}_2$,
  \begin{eqnarray*}
    \norma{\mathcal{E}_2}
    & =
    & \norma{D_u f (z^+,u^*) - D_u f (z^*,u^*)} \;
      \norma{T (z^+,z^-) (u^*)-u^*}
    \\
    & \leq
    & \O \, \norma{z^+-z^*} \; \norma{z^+-z^-} \,.
  \end{eqnarray*}
  Finally, by Lemma~\ref{lem:EstimateXi2}
  \begin{eqnarray*}
    \norma{\mathcal{E}_3}
    & =
    & \norma{D_u f (z^*,u^*)} \;
      \norma{
      \left(T (z^+,z^-) (u^*)-u^*\right)
      -
      \left(T (z^+,z^-) (u) - u\right)
      }
    \\
    & \leq
    & \O \, \norma{z^+ - z^-} \; \norma{u-u^*} \,,
  \end{eqnarray*}
  completing the proof.
\end{proof}

As a consequence, we also prove the well posedness of the Generalized
Riemann Problem~(\ref{eq:21})--\eqref{eq:25}.

\begin{lemma}
  \label{lem:RP}
  Let $f$ satisfy~\ref{it:f1}--\ref{it:f4} and $\Xi$
  satisfy~\ref{eq:Xi1}. Then, there exists a positive $\delta$ such
  that if $u_\ell,u_r \in \Omega$ and $z^+, z^-\in \mathcal Z$ satisfy
  \begin{displaymath}
    \norma{u_\ell-u_r} \leq \delta \,,\qquad
    \norma{z^+ - z^-} \leq \delta \,,
  \end{displaymath}
  then, the Generalized Riemann Problem~\eqref{eq:21}--\eqref{eq:25}
  admits a unique solution in the sense of
  Definition~\ref{def:GenRP}. Moreover, the waves' sizes
  $(\sigma_1, \ldots, \sigma_n)$ and the states $(w_1, \ldots, w_n)$
  in~\eqref{eq:2} exist, are uniquely defined and are Lipschitz
  continuous functions of $z^+, z^-, u_r, u_\ell$.
\end{lemma}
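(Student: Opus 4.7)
The plan is to reduce the construction of the admissible solution to inverting a single finite-dimensional map, in the spirit of the classical Lax construction, with the jump condition at $x=0$ absorbed into one link of a composition through the map $T$ provided by Lemma~\ref{lem:ExistenceT}. Since Lemma~\ref{lem:ExistenceT} already furnishes $T(z^+,z^-)$, Lipschitz in $(z^+,z^-)$ and $\C2$ in $u$, such that the middle condition in~\eqref{eq:2} is equivalent to $w_{i_o+1}=T(z^+,z^-)(w_{i_o})$, the whole problem collapses to finding wave sizes $\pmb\sigma=(\sigma_1,\dots,\sigma_n)$ so that starting from $u^\ell$ and applying, in order, the Lax curves $H_1,\dots,H_{i_o}$ for the flux $f(z^-,\cdot)$, then $T(z^+,z^-)$, then the Lax curves $H_{i_o+1},\dots,H_n$ for the flux $f(z^+,\cdot)$, one lands exactly on $u^r$.

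The key steps, in order, are the following. First, introduce the map
\begin{displaymath}
  \Phi(z^+,z^-,u^\ell,\pmb\sigma)
  =
  H_n(z^+,\sigma_n)\circ\cdots\circ H_{i_o+1}(z^+,\sigma_{i_o+1})
  \circ T(z^+,z^-)
  \circ H_{i_o}(z^-,\sigma_{i_o})\circ\cdots\circ H_1(z^-,\sigma_1)(u^\ell) \,.
\end{displaymath}
Solving the generalized Riemann problem means solving $\Phi(z^+,z^-,u^\ell,\pmb\sigma)=u^r$ for $\pmb\sigma$. Second, evaluate at the base point: by Lemma~\ref{lem:ExistenceT}.1, $T(\bar z,\bar z)=\mathrm{Id}$, so $\Phi(\bar z,\bar z,\bar u,0)=\bar u$, and by the standard properties of Lax curves one has $\partial_{\sigma_i}\Phi\big|_0 = r_i(\bar z,\bar u)$, the $i$-th right eigenvector of $D_u f(\bar z,\bar u)$. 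By strict hyperbolicity~\ref{it:f2}, these vectors are linearly independent, so $D_{\pmb\sigma}\Phi\big|_0$ is invertible. Third, combine the $\C2$ regularity of $\Phi$ in $(\pmb\sigma,u^\ell)$ (from $\C2$ smoothness of Lax curves in their size parameter and in the base point, together with $T\in\C2$ in $u$) with the Lipschitz dependence of $T$ in $(z^+,z^-)$ to apply a Lipschitz-type implicit function theorem. This yields, on a small ball, a unique Lipschitz solution $\pmb\sigma=\pmb\sigma(z^+,z^-,u^\ell,u^r)$; the intermediate states $w_i$ are then Lipschitz through the formulas in~\eqref{eq:2}. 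Finally, once $\pmb\sigma$ and the $w_i$ are in hand, the admissible solution is assembled by gluing the (classical) Lax solutions to the two standard Riemann problems in Definition~\ref{def:GenRP}, which are well posed by~\ref{it:f1}--\ref{it:f4} and the classical theory, see \cite[\S~5.3]{Bressan2000}.

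The main obstacle is that $T$, as a map in $(z^+,z^-)$, is only Lipschitz, not $\C1$, so the invertibility argument cannot use a purely differential form of the implicit function theorem. The cleanest way around this is to write $\Phi = \Phi_0 + R$, where $\Phi_0$ is the classical Lax composition (with $T$ replaced by the identity), invert $\Phi_0$ via the usual $\C2$ implicit function theorem, and treat $R$ as a perturbation of size $\O\,\norma{z^+-z^-}$, controlled by Lemma~\ref{lem:EstimateXi2} applied through Lemma~\ref{lem:ExistenceT}.1. A Banach contraction argument on the fixed-point reformulation $\pmb\sigma = \pmb\sigma - D_{\pmb\sigma}\Phi_0\big|_0^{-1}\bigl(\Phi(z^+,z^-,u^\ell,\pmb\sigma)-u^r\bigr)$ then yields existence, uniqueness in the small-data regime $\norma{u^\ell-u^r}\le\delta$, $\norma{z^+-z^-}\le\delta$, and Lipschitz dependence of $\pmb\sigma$, hence of the $w_i$ and of the whole admissible solution, on $(z^+,z^-,u^\ell,u^r)$.
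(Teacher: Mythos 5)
Your argument is correct and follows essentially the same route as the paper: after using Lemma~\ref{lem:ExistenceT} (i.e.~\eqref{eq:1}) to replace the middle jump condition in~\eqref{eq:2} by $w_{i_o+1}=T(z^+,z^-)(w_{i_o})$, one is in the setting of a standard Lax-type composition with one Lipschitz-only link, and the rest is a fixed-point/implicit-function argument around the base point. The paper performs exactly this reduction but then immediately delegates the technical inversion to~\cite[Lemma~3]{AmadoriGosseGuerra2002}, whereas you spell out the Banach contraction that the cited lemma encapsulates.
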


\begin{proof}
  Simply rewrite~\eqref{eq:2} by means of~\eqref{eq:1} to
  use~\cite[Lemma~3]{AmadoriGosseGuerra2002}.
\end{proof}

The following notation is of use below:
\begin{equation}
  \label{eq:39}
  (\sigma_1, \ldots, \sigma_n)
  =
  E(z^+, z^-, u_r,  u_\ell) \,.
\end{equation}
We separate the waves with negative ($\pmb\sigma'$) or positive
($\pmb\sigma''$) propagation speed as follows:
\begin{align}
  \begin{split}
    \pmb\sigma' =  (\sigma_1,\dots,\sigma_{i_o},0,\dots,0),
    &\qquad \pmb\sigma''  =
    (0,\dots,0,\sigma_{i_o+1},\dots,\sigma_n),
    \\
    \pmb\sigma &  =  \pmb\sigma'+\pmb\sigma''\in\reali^n.
  \end{split}
\end{align}
Given two $n$-tuples of waves $\pmb{\alpha}$ and $\pmb{\beta}$, the
waves $i$ with size $\alpha_i \neq 0$ and $j$ with size
$\beta_j \neq 0$ are \emph{approaching} whenever $i>j$ or $i=j$, the
$i$--th family is genuinely nonlinear and
$\min\left\{\alpha_i, \beta_j\right\} < 0$,
see~\cite[\S~7.3]{Bressan2000} or~\cite[\S~9.9]{Dafermos2000}. Call
$\mathcal{A}_{\pmb{\alpha},\pmb{\beta}}$ the set of these pairs
$(i,j)$.

\begin{lemma}[{\cite[Theorem p.~30]{Yong1999}}]
  \label{lem:app}
  Let
  $\phi \in \C{2,1} (\overline{B (0, \bar\delta)}\times\overline{B (0,
    \bar\delta)}; \reali^m)$ be such that
  \begin{equation}
    \label{eq:16}
    \phi (\pmb\alpha,\pmb\beta)=0
    \quad \mbox{ for all } \quad
    \pmb\alpha, \pmb\beta
    \quad \mbox{ with } \quad
    \mathcal{A}_{\pmb\alpha,\pmb\beta}=\emptyset \,.
  \end{equation}
  Then, for all $\pmb\alpha,\pmb\beta$
  \begin{displaymath}
    \norma{\phi (\pmb\alpha;\pmb\beta)}
    \leq
    \O
    \sum_{(i,j) \colon i>j}
    \modulo{\alpha_i \, \beta_j}
    +
    \O
    \left(\norma{\pmb\alpha} + \norma{\pmb\beta}\right)
    \sum_{i \colon {\min \{\alpha_i,\beta_i\}<0 \atop \mbox{\tiny gen. nonl.}}}
    \modulo{\alpha_i \, \beta_i}
    \,.
  \end{displaymath}
\end{lemma}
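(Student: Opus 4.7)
The plan is to exploit the structural hypothesis~\eqref{eq:16} via an iterated Hadamard decomposition, keeping careful track of which Taylor coefficients of $\phi$ are forced to vanish and to what order, and using the $C^{2,1}$ regularity to convert pointwise vanishing into quantitative remainder estimates.

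First I would apply Hadamard's lemma twice. Since both $\phi(\pmb{\alpha}, 0)\equiv 0$ and $\phi(0, \pmb{\beta})\equiv 0$ (each being a special case of $\mathcal{A}_{\pmb\alpha,\pmb\beta}=\emptyset$), successive integration along the two coordinate blocks yields a decomposition
\begin{equation*}
  \phi(\pmb\alpha,\pmb\beta)
  =
  \sum_{i,j=1}^n \alpha_i\, \beta_j\, \psi_{ij}(\pmb\alpha,\pmb\beta),
  \qquad \psi_{ij}\in C^{0,1}.
\end{equation*}
For the approaching pairs $i>j$ the trivial bound $\modulo{\alpha_i\beta_j\,\psi_{ij}}\leq \O\,\modulo{\alpha_i\beta_j}$ feeds directly the first sum in the thesis.

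Next I would extract additional vanishing from the $\psi_{ij}$ corresponding to the remaining pairs. For $i<j$, testing $\phi$ on the plane $\{\pmb\alpha = t\,e_i,\;\pmb\beta=s\,e_j\}$ (always non-approaching by~\eqref{eq:16}) forces $\psi_{ij}(0,0)=0$; when $i=j$ is linearly degenerate the same argument applied on the line $\mathrm{span}(e_i)$ gives $\psi_{ii}(0,0)=0$; when $i=j$ is genuinely nonlinear, $\phi$ vanishes on the positive quadrant $\alpha_i,\beta_i\geq 0$, and computing the cross derivative from that quadrant again forces $\psi_{ii}(0,0)=0$. Because $\phi\in C^{2,1}$, the $\psi_{ij}$ are Lipschitz continuous, so these pointwise zeros upgrade to $\modulo{\psi_{ij}(\pmb\alpha,\pmb\beta)}\leq \O\,(\norma{\pmb\alpha}+\norma{\pmb\beta})$ and the corresponding contributions become cubic.

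The main obstacle, and the most delicate step, is to refine this crude cubic bound $\O\,\modulo{\alpha_i\beta_j}\,(\norma{\pmb\alpha}+\norma{\pmb\beta})$ into the sharper structured form of the thesis, which must in particular vanish on configurations where $\pmb\alpha,\pmb\beta$ have monotone disjoint supports. To this end I would apply a second Hadamard step to each ``bad'' $\psi_{ij}$, exploiting that it vanishes on a whole subspace rather than just at the origin: the plane $\mathrm{span}(e_i,e_j)$ for $i<j$, the line $\mathrm{span}(e_i)$ for $i=j$ linearly degenerate, the positive quadrant for $i=j$ genuinely nonlinear. For the cross-family case $i<j$, this produces monomials $\alpha_i\beta_j\alpha_k$ and $\alpha_i\beta_j\beta_l$ with bounded Lipschitz coefficients; those for which $k>j$ or $i>l$ are immediately absorbed into $\sum_{k>l}\modulo{\alpha_k\beta_l}$, while the surviving ``fully non-approaching'' monomials are eliminated by a finite iteration of the same argument (the procedure must terminate because each iteration strictly increases the multiplicity of non-approaching factors and the indices live in the finite set $\{1,\dots,n\}$). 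For $i=j$ genuinely nonlinear, the vanishing on the positive quadrant lets one write $\psi_{ii}$ with an extra factor $\modulo{\alpha_i^-}+\modulo{\beta_i^-}$ (the Lipschitz remainder from the positive quadrant), which combined with $\alpha_i\beta_i$ produces precisely the cubic same-family term $\O\,(\norma{\pmb\alpha}+\norma{\pmb\beta})\,\modulo{\alpha_i\beta_i}$ present in the second sum exactly under the shock condition $\min\{\alpha_i,\beta_i\}<0$. The real work is the combinatorial bookkeeping of this iteration.
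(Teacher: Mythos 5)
The paper's proof telescopes $\phi$ componentwise, writing the difference as a double sum of integrals of $\partial_{\alpha_i}\partial_{\beta_j}\phi$ evaluated at points of the very specific form $(\alpha_1,\ldots,\alpha_{i-1},a,0,\ldots,0;\,0,\ldots,0,b,\beta_{j+1},\ldots,\beta_n)$. The decisive observation, which your proposal misses, is that for every pair $(i,j)\notin\mathcal{A}_{\pmb\alpha,\pmb\beta}$ \emph{these intermediate points have empty approaching set themselves}, so by~\eqref{eq:16} the function $\phi$ vanishes identically on the region swept out by the integral and hence the mixed derivative vanishes pointwise there. The $i<j$ and non-approaching $i=j$ contributions are therefore exactly zero — no cubic bound, no absorption, no iteration. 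The remaining $i>j$ terms are bounded by $\norma{D^2\phi}_{\C0}$, and the $i=j$ genuinely nonlinear terms with $\min\{\alpha_i,\beta_i\}<0$ pick up the extra factor $(\norma{\pmb\alpha}+\norma{\pmb\beta})$ because $\partial_{\alpha_i}\partial_{\beta_i}\phi(0;0)=0$ and $D^2\phi$ is Lipschitz.

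Your decomposition $\phi=\sum\alpha_i\beta_j\psi_{ij}$ is in essence the same telescoping, but because you only extract the pointwise information $\psi_{ij}(0,0)=0$ (or vanishing on a subspace) rather than the identical vanishing of the integrand on the relevant region, you are left with a cubic bound for the $i<j$ pairs that the thesis has no slot for. The proposed fix — a further iterated Hadamard decomposition of each ``bad'' $\psi_{ij}$ — does not work under the stated regularity: $\phi\in\C{2,1}$ makes $\psi_{ij}$ merely Lipschitz, so the second Hadamard step already produces only bounded (not continuous, not Lipschitz) coefficients, and no further vanishing can be extracted; the iteration is blocked after a single extra step, well before the claimed ``finite termination in $\{1,\dots,n\}$''. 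Moreover the absorption argument for the surviving monomials is not correct even when it could be carried out: for $n=2$, $(i,j)=(1,2)$ and all components positive, a monomial such as $\alpha_1\beta_1\beta_2$ with a bounded coefficient is not controlled by $\modulo{\alpha_2\beta_1}+(\norma{\pmb\alpha}+\norma{\pmb\beta})\sum\modulo{\alpha_i\beta_i}$ when $\alpha_2$ is small, so the ``combinatorial bookkeeping'' you defer is in fact an unresolvable obstruction. The correct route is the paper's: choose the telescoping points so that~\eqref{eq:16} kills the non-approaching terms at the level of the integrand, not of its Taylor coefficients.
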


\begin{proof}
  Observe that for all $\pmb\alpha, \pmb\beta$ in
  $\overline{B (0, \bar\delta)}$, we have
  $\mathcal{A}_{\pmb\alpha,0} = \mathcal{A}_{0,\pmb\beta} =
  \emptyset$. Hence,
  \begin{displaymath}
    \phi (\pmb\alpha;0) = \phi (0;\pmb\beta) = 0
    \quad \mbox{ and } \quad
    \partial_{\alpha_i} \phi (\pmb\alpha;0) = \partial_{\beta_j} \phi (0;\pmb\beta) = 0
  \end{displaymath}
  for all $i,j=1, \ldots, n$. Following~\cite{Yong1999}, we have
  \begin{eqnarray*}
    &
    & \norma{\phi (\pmb\alpha;\pmb\beta)}
    \\
    & =
    & \norma{\phi (\pmb\alpha;\pmb\beta)- \phi (\pmb\alpha; 0)}
    \\
    & \leq
    & \sum_{i=1}^n
      \norma{
      \phi (\alpha_1, \ldots, \alpha_i, 0, \ldots,0;\pmb\beta)
      -
      \phi (\alpha_1, \ldots, \alpha_{i-1}, 0, \ldots,0;\pmb\beta)
      }
    \\
    & \leq
    & \sum_{i=1}^n
      \int_0^{\alpha_i}
      \norma{
      \partial_{\alpha_i} \phi (\alpha_1, \ldots, \alpha_{i-1},a,0, \ldots,0;\pmb\beta)
      }
      \d{a}
    \\
    & =
    & \sum_{i=1}^n
      \int_0^{\alpha_i}
      \norma{
      \partial_{\alpha_i} \phi (\alpha_1, \ldots, \alpha_{i-1},a,0, \ldots,0;\pmb\beta)
      -
      \partial_{\alpha_i} \phi (\alpha_1, \ldots, \alpha_{i-1},a,0, \ldots,0;0)
      }
      \d{a}
    \\
    & \leq
    & \sum_{i=1}^n
      \sum_{j=1}^n
      \int_0^{\alpha_i}
      \left\|
      \partial_{\alpha_i} \phi (\alpha_1, \ldots, \alpha_{i-1},a,0, \ldots,0;0, \ldots, 0, \beta_j, \ldots, \beta_n)
      \right.
    \\
    &
    & \qquad\qquad\qquad
      \left.
      -
      \partial_{\alpha_i} \phi (\alpha_1, \ldots, \alpha_{i-1},a,0, \ldots,0;0, \ldots, 0, \beta_{j+1}, \ldots, \beta_n)
      \right\|
      \d{a}
    \\
    & \leq
    & \sum_{i=1}^n
      \sum_{j=1}^n
      \int_0^{\alpha_i} \int_0^{\beta_j}
      \norma{
      \partial_{\alpha_i} \partial_{\beta_j}
      \phi (\alpha_1, \ldots, \alpha_{i-1},a,0, \ldots,0;0, \ldots, 0, b, \beta_{j+1}, \ldots, \beta_n)
      }
      \d{b} \d{a}
    \\
    & \leq
    & \sum_{(i,j) \in \mathcal{A}_{\pmb\alpha,\pmb\beta}}
      \int_0^{\alpha_i} \int_0^{\beta_j}
      \norma{
      \partial_{\alpha_i} \partial_{\beta_j}
      \phi (\alpha_1, \ldots, \alpha_{i-1},a,0, \ldots,0;0, \ldots, 0, b, \beta_{j+1}, \ldots, \beta_n)
      }
      \d{b} \d{a}
    \\
    & \leq
    & \norma{D^2\phi}_{\C0} \sum_{i>j} \modulo{\alpha_i \, \beta_j}
      +
      \Lip(D^2\phi)
      \left(\norma{\pmb\alpha} + \norma{\pmb\beta}\right)
      \sum_{i \colon {\min \{\alpha_i,\beta_i\}<0 \atop \mbox{\tiny gen. nonl.}}} \modulo{\alpha_i \, \beta_j} \,.
  \end{eqnarray*}
  Above, we noted that some terms in the latter double sum vanish
  by~\eqref{eq:16}, since
  \begin{eqnarray*}
    (i,j) \not\in \mathcal{A}_{\pmb\alpha,\pmb\beta}
    & \implies
    & \mathcal{A}_{\alpha_1, \ldots, \alpha_{i-1},a,0, \ldots,0; 0, \ldots, 0,b,\beta_{j+1}, \ldots, \beta_n} = \emptyset
      \quad \mbox{ for all }
      \begin{array}{c}
        a \mbox{ between } 0 \mbox{ and } \alpha_i;
        \\
        b \mbox{ between } 0 \mbox{ and } \beta_j.
      \end{array}
  \end{eqnarray*}
  In the terms with $i>j$, we use a standard estimate bounding the
  integral by means of the $\C0$ norm. We are left with the terms with
  $i=j$, the $i$--th field is genuinely nonlinear and
  $\min\{\alpha_i,\beta_i\}<0$. In this case, \eqref{eq:16} ensures
  that
  \begin{displaymath}
    \phi (\alpha_1, \ldots, \alpha_{i-1}, a, 0, \ldots, 0;0, \ldots, 0, b, \beta_{i+1}, \ldots, \beta_n) = 0
    \quad \mbox{ for all }a\geq 0 \mbox{ and } b \geq 0\,.
  \end{displaymath}
  Hence $\partial_{\alpha_1} \partial_{\beta_i} \phi (0;0) = 0$ and
  $\norma{\partial_{\alpha_i}\partial_{\beta_i} \phi
    (\pmb\alpha,\pmb\beta)} \leq \Lip (D^2\phi)
  \left(\norma{\pmb\alpha}+\norma{\pmb\beta}\right)$.
\end{proof}

\begin{lemma}[{\cite[Lemma~4]{AmadoriGosseGuerra2002}
    and~\cite[Lemma~4.8]{ColomboGuerraHolle}}]
  \label{lem:AGG4}
  Let $f$ satisfy~\ref{it:f1}--\ref{it:f4}, $\Xi$ satisfy~\ref{eq:Xi1}
  and~\ref{eq:Xi3}.  Then, there exists a positive $\delta$ such that
  if $u_\ell,u_r \in \Omega$ and $z^+, z^-\in \mathcal Z$ are such
  that
  \begin{displaymath}
    \norma{u_\ell-u_r} \leq \delta \,,\qquad
    \norma{z^+ - z^-} \leq \delta
  \end{displaymath}
  and if $\pmb\sigma = E (z^+, z^-, u_r, u_\ell)$ is as
  in~\eqref{eq:39}, we have
  \begin{displaymath}
    \norma{u_r - u_\ell}
    = \O \left(\norma{\pmb\sigma} + \norma{z^+-z^-}\right)
    \quad \mbox{ and } \quad
    \norma{\pmb\sigma}
    = \O \left(\norma{u_r -  u_\ell} + \norma{z^+-z^-} \right)\,.
  \end{displaymath}
\end{lemma}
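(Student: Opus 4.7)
The plan is to reduce both estimates to the Lipschitz regularity provided by Lemma~\ref{lem:RP} and Lemma~\ref{lem:ExistenceT}, exploiting the observation that the wave strength map $E$ vanishes on the diagonal $z^+ = z^-$, $u_r = u_\ell$.

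By Definition~\ref{def:GenRP} and Lemma~\ref{lem:ExistenceT}, the recursion~\eqref{eq:2} encoding the solution of the Generalized Riemann Problem is equivalent to $u_r = \Phi(z^+, z^-, \pmb\sigma, u_\ell)$, where $\Phi$ is the composition of the Lax wave maps $H_i(z, \sigma_i)(\cdot)$ from~\eqref{eq:6} with the junction map $T(z^+, z^-)$ from Lemma~\ref{lem:ExistenceT} inserted at position $i_o$. Each $H_i$ is $\C2$ by~\ref{it:f1}--\ref{it:f3} and $T$ is Lipschitz by Lemma~\ref{lem:ExistenceT}, so $\Phi$ is locally Lipschitz near $(\bar z, \bar z, 0, \bar u)$. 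Moreover $H_i(z, 0) = \Id$ by the standard Lax parametrization, and $T(z, z) = \Id$ by Lemma~\ref{lem:ExistenceT}.1; hence $\Phi(z, z, 0, u) = u$ for every admissible $z, u$.

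For the first estimate, I would write $u_r - u_\ell = \Phi(z^+, z^-, \pmb\sigma, u_\ell) - \Phi(z^-, z^-, 0, u_\ell)$ and apply the local Lipschitz continuity of $\Phi$ to obtain $\norma{u_r - u_\ell} \leq \O \left(\norma{z^+ - z^-} + \norma{\pmb\sigma}\right)$. For the reverse direction, Lemma~\ref{lem:RP} guarantees that $E$ is Lipschitz continuous in all its arguments. Since by~\ref{it:f2} the identity $f(z, w_{i_o+1}) = f(z, w_{i_o})$ forces $w_{i_o+1} = w_{i_o}$ locally, the trivial Riemann problem with $z^+ = z^-$ and $u_r = u_\ell$ admits only the zero solution, so $E(z, z, u, u) = 0$. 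Hence $\norma{\pmb\sigma} = \norma{E(z^+, z^-, u_r, u_\ell) - E(z^-, z^-, u_\ell, u_\ell)} \leq \O \left(\norma{z^+ - z^-} + \norma{u_r - u_\ell}\right)$.

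The main subtlety to verify is that the Lipschitz constants of $\Phi$ and $E$ can be chosen uniformly on a small neighborhood of $(\bar z, \bar u)$, but this is already guaranteed by the regularity hypotheses~\ref{it:f1}--\ref{it:f3} and~\ref{eq:Xi1} together with the construction of $T$ in Lemma~\ref{lem:ExistenceT}. Beyond this, no genuine obstacle is expected; the argument essentially mirrors~\cite[Lemma~4]{AmadoriGosseGuerra2002} and~\cite[Lemma~4.8]{ColomboGuerraHolle}, whose only required modification here is the explicit dependence of $f$ on the parameter $z$, which is absorbed into the Lipschitz dependence of $\Phi$ on $(z^+, z^-)$.
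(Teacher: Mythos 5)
The paper does not actually prove this lemma; it delegates to \cite[Lemma~4]{AmadoriGosseGuerra2002} and \cite[Lemma~4.8]{ColomboGuerraHolle}. Your reconstruction is correct and follows the natural route those references take: both estimates reduce to Lipschitz regularity of the map $\Phi$ (composition of the $H_i$ with $T$ at position $i_o$) and of $E$, combined with the observation that both vanish on the diagonal $z^+=z^-$, $u_\ell=u_r$ because $H_i(z,0)=\Id$, $T(z,z)=\Id$ and, by \ref{eq:Xi3} plus uniqueness in Lemma~\ref{lem:RP}, $E(z,z,u,u)=0$. One minor attribution quibble: local injectivity of $u\mapsto f(z,u)$ uses that no eigenvalue vanishes, which is \ref{it:f4} rather than \ref{it:f2} alone; but all of \ref{it:f1}--\ref{it:f4} are in force, so the argument stands.
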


\begin{lemma}
  \label{lem:milan}
  Let $f$ satisfy~\ref{it:f1}--\ref{it:f4}. For all
  $z \in \mathcal{Z}$, $u \in \Omega$ and for all sufficiently small
  $\pmb\alpha, \pmb\beta \in \reali^n$
  \begin{eqnarray}
    \label{eq:7}
    \norma{
    H (z,\pmb\beta) \circ H (z,\pmb\alpha) (u)
    -
    H (z, \pmb\alpha+\pmb\beta) (u)
    }
    & \leq
    & \O \, \sum_{(\alpha_i,\beta_i) \in \mathcal{A}_{\pmb\alpha,\pmb\beta}}
      \modulo{\alpha_i \, \beta_i}
    \\
    \label{eq:8}
    \norma{H (z^+,\pmb\alpha) (u) - H (z^-,\pmb\alpha) (u)}
    & \leq
    & \O \, \norma{z^+ - z^-} \sum_{i=1}^n \modulo{\alpha_i}
  \end{eqnarray}
\end{lemma}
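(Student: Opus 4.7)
The plan is to treat the two inequalities separately. For \eqref{eq:7} the approach is to invoke Lemma~\ref{lem:app}, while for \eqref{eq:8} I would argue by a telescoping induction on the composition~\eqref{eq:6}.

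For \eqref{eq:7}, fix $z \in \mathcal{Z}$ and $u \in \Omega$ and set
\begin{equation*}
\phi(\pmb\alpha,\pmb\beta) := H(z,\pmb\beta) \circ H(z,\pmb\alpha)(u) - H(z,\pmb\alpha+\pmb\beta)(u).
\end{equation*}
Under \ref{it:f1}--\ref{it:f3} each Lax curve $\sigma_i \mapsto H_i(z,\sigma_i)$ is of class $\C{2,1}$, hence $\phi \in \C{2,1}$ by composition. The heart of the argument is to verify the vanishing hypothesis~\eqref{eq:16}, i.e.~$\phi(\pmb\alpha,\pmb\beta) = 0$ as soon as $\mathcal{A}_{\pmb\alpha,\pmb\beta} = \emptyset$. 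Under this hypothesis, for every pair $(\alpha_i,\beta_j)$ with $\alpha_i\beta_j \ne 0$, either $i < j$ (so in both compositions the $i$-wave precedes the $j$-wave and no crossing occurs), or $i = j$ with the $i$-th family linearly degenerate (in which case $H_i(z,\beta_i) \circ H_i(z,\alpha_i) = H_i(z,\alpha_i+\beta_i)$ because the Lax curves form a one-parameter group of shifts along the integral curve of the $i$-th right eigenvector), or $i = j$ with the $i$-th field genuinely nonlinear and $\min\{\alpha_i,\beta_i\} \ge 0$ (two rarefactions on the same rarefaction curve concatenate additively by definition). Once \eqref{eq:16} is established, Lemma~\ref{lem:app} applied to $\phi$ delivers~\eqref{eq:7}, the terms $i=j$ with $\min\{\alpha_i,\beta_i\} \ge 0$ being automatically absorbed into the $(i,j)$ with $i>j$ by the definition of $\mathcal{A}_{\pmb\alpha,\pmb\beta}$.

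For \eqref{eq:8}, the starting observation is that $H_i(z,0)(v) = v$ does not depend on $z$, so $D_z H_i(z,0)(v) \equiv 0$. A first-order Taylor expansion of $\sigma_i \mapsto D_z H_i(z,\sigma_i)(v)$ around $\sigma_i = 0$ then gives the single-curve estimate
\begin{equation*}
\norma{H_i(z^+,\sigma_i)(v) - H_i(z^-,\sigma_i)(v)} \leq \O \, \norma{z^+ - z^-} \, \modulo{\sigma_i},
\end{equation*}
uniformly for $v$ in a neighborhood of $\bar u$. Set $w_0^{\pm} = u$ and $w_i^{\pm} = H_i(z^\pm, \alpha_i)(w_{i-1}^{\pm})$ so that $H(z^\pm, \pmb\alpha)(u) = w_n^{\pm}$. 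Then I would split
\begin{equation*}
\norma{w_i^+ - w_i^-} \leq \norma{H_i(z^+,\alpha_i)(w_{i-1}^+) - H_i(z^-,\alpha_i)(w_{i-1}^+)} + \norma{H_i(z^-,\alpha_i)(w_{i-1}^+) - H_i(z^-,\alpha_i)(w_{i-1}^-)}
\end{equation*}
and control the first summand by the single-curve estimate and the second by the uniform Lipschitz continuity of $H_i(z^-,\alpha_i)$ in $v$. An immediate induction on $i$ yields \eqref{eq:8}.

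The main obstacle I anticipate is the careful bookkeeping behind the vanishing condition \eqref{eq:16} for the first inequality: one has to separate the three regimes (different families, same linearly degenerate family, same genuinely nonlinear family with both sizes nonnegative) and check that in each case the two sides of $\phi$ literally coincide. Each regime is classical in the theory of Lax curves, but the enumeration is the only non mechanical step of the whole argument.
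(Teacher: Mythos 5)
Your proof of~\eqref{eq:7} takes the same route as the paper: both set $\phi(\pmb\alpha,\pmb\beta) = H(z,\pmb\beta)\circ H(z,\pmb\alpha)(u) - H(z,\pmb\alpha+\pmb\beta)(u)$ and invoke Lemma~\ref{lem:app}; you merely spell out the verification of the vanishing hypothesis~\eqref{eq:16}, which the paper leaves implicit. For~\eqref{eq:8} the paths genuinely diverge. The paper treats the full wave vector $\pmb\alpha$ as the ``$w$''-variable of Lemma~\ref{lem:EstimateXi2}: with $\phi(z^+,z^-,\pmb\alpha) = H(z^+,\pmb\alpha)(u) - H(z^-,\pmb\alpha)(u)$ one has $\phi(z,z,\cdot)=0$ and $\phi(z^+,z^-,0)=0$, so the second estimate in~\eqref{eq:lipXi} applied with $w_2=\pmb\alpha$, $w_1=0$ gives the bound in two lines. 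You instead telescope along the composition $H_n\circ\cdots\circ H_1$, controlling each increment by a single-curve estimate deduced from $D_z H_i(z,0)(v)\equiv 0$ together with the uniform Lipschitz constant of $v\mapsto H_i(z,\sigma_i)(v)$; the induction on $i$ then accumulates the losses $\O\norma{z^+-z^-}\modulo{\alpha_i}$. Both arguments rest on the same joint regularity of the Lax curves in $(z,\sigma_i)$ and give the same $\O\norma{z^+-z^-}\sum_i\modulo{\alpha_i}$ bound; the paper's version is shorter because it re-uses an existing lemma, while yours is more self-contained and makes the cancellation at $\sigma_i=0$ explicit. One cosmetic remark: your closing sentence about terms with $i=j$ and $\min\{\alpha_i,\beta_i\}\geq0$ being ``absorbed'' is off --- those terms simply do not appear on the right-hand side of Lemma~\ref{lem:app}, so there is nothing to absorb; the sum over $(i,j)$ with $i>j$ already equals the corresponding sum restricted to $\mathcal{A}_{\pmb\alpha,\pmb\beta}$ because the factor $\modulo{\alpha_i\beta_j}$ kills the pairs where one size vanishes.
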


\begin{proof}
  The classical Glimm interaction estimate~\eqref{eq:7} follows from
  Lemma~\ref{lem:app} with
  $f (\pmb\alpha,\pmb\beta) = H (z,\pmb\beta) \circ H (z,\pmb\alpha)
  (u) - H (z, \pmb\alpha+\pmb\beta) (u)$.

  To obtain the second, apply Lemma~\ref{lem:EstimateXi2} with
  $w_2 = \pmb\alpha$, $w_1 = 0$ and
  $\phi (z^+,z^-,\pmb\alpha) = H (z^+,\pmb\alpha) (u) - H
  (z^-,\pmb\alpha) (u)$.
\end{proof}

The following lemma comprises the interaction estimates necessary
below.

\begin{lemma}
  \label{lem:inter}
  Let $f$ satisfy~\ref{it:f1}--\ref{it:f4} and $\Xi$
  satisfy~\ref{eq:Xi1}, \ref{eq:Xi3}.  Then, there exists a positive
  $\delta$ such that if $u_\ell,u_r \in \Omega$;
  $z^+, z^-\in \mathcal Z$ and $\pmb\alpha, \pmb\beta \in \reali^n$
  are such that
  \begin{displaymath}
    \norma{u_\ell-u_r} \leq \delta \,,\qquad
    \norma{z^+ - z^-} \leq \delta \,,\qquad
    \norma{\pmb\alpha} + \norma{\pmb\beta} \leq \delta
  \end{displaymath}
  with reference to Figure~\ref{fig:gamma}, the following general
  interaction estimate holds:
  \begin{equation}
    \label{eq:9}
    \norma{u_*-u_r}
    \leq
    \O
    \left(
      \sum_{(i,j)\in\mathcal{A}_{\pmb\alpha,\pmb\beta}}
      \modulo{\alpha_i \, \beta_j}
      +
      \norma{z^+ - z^-}
      \sum_{i>i_o} \modulo{\alpha_i}
    \right) \,.
  \end{equation}
\end{lemma}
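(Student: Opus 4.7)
The plan is to realize both $u_*$ and $u_r$ as explicit compositions of the Lax wave operators $H(z,\pmb\sigma)$ from Definition~\ref{def:GenRP} and the transmission map $T(z^+,z^-)$ of Lemma~\ref{lem:ExistenceT}, and then to compare them via a telescope in which every elementary step reduces to one of three atomic estimates already at our disposal: the Glimm interaction commutator~\eqref{eq:7}, the $z$--sensitivity of Lax curves~\eqref{eq:8}, and the Lipschitz properties of $T$ (Lemma~\ref{lem:ExistenceT}.1 together with Lemma~\ref{lem:EstimateXi2}). The smallness hypothesis $\norma{u_\ell - u_r} + \norma{z^+ - z^-} + \norma{\pmb\alpha}+\norma{\pmb\beta} \le 3\delta$ will keep all intermediate states inside the neighborhood where each of these maps is well defined and Lipschitz, so the $\O$ constant stays uniform.

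Concretely, Figure~\ref{fig:gamma} should depict the interaction at a zero wave from $z^-$ to $z^+$: a fan of sizes $\pmb\alpha$ arrives on the $z^-$ side and a fan of sizes $\pmb\beta$ on the $z^+$ side; one of $u_r$, $u_*$ is reached by following the two incoming fans through $T(z^+,z^-)$ literally, while the other is reached by first solving the Generalized Riemann Problem at the zero wave (via Lemma~\ref{lem:RP}) and then reading off its outgoing state. Inserting intermediate states, $u_* - u_r$ decomposes into three kinds of contributions:
\begin{itemize}
\item commutators of two wave fans interacting on the \emph{same} side of the zero wave, which~\eqref{eq:7} bounds by $\O\sum_{(i,j)\in\mathcal{A}_{\pmb\alpha,\pmb\beta}}\modulo{\alpha_i \, \beta_j}$;
\item rearrangements in which a wave of family $i>i_o$ from $\pmb\alpha$ must be moved \emph{across} the zero wave; combining~\eqref{eq:8} with the bound $\norma{T(z^+,z^-)(u) - u}=\O\,\norma{z^+-z^-}$ that follows from Lemma~\ref{lem:ExistenceT}.1 and Lemma~\ref{lem:EstimateXi2}, each such wave contributes $\O\,\norma{z^+-z^-}\,\modulo{\alpha_i}$, producing the second sum in~\eqref{eq:9};
\item waves of family $i \le i_o$ in $\pmb\alpha$ already sit on the correct ($z^-$) side of the zero wave, so they need not be transported across it and therefore yield no extra $\norma{z^+ - z^-}$ penalty beyond what is already contained in the Glimm part.
\end{itemize}

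The main obstacle, and the one step that is not a mechanical invocation of an earlier result, is the bookkeeping in the second bullet: for each positive--speed wave $\alpha_i$ ($i>i_o$) one must commute $T(z^+,z^-)$ past $H_i(z^-,\alpha_i)$ and replace the latter by $H_i(z^+,\alpha_i)$, extracting a factor $\norma{z^+-z^-}\,\modulo{\alpha_i}$ at each swap. The cleanest route I see is to apply Lemma~\ref{lem:app} not only to the map used in~\eqref{eq:7} but also, separately, to
\[
\psi(z^+,z^-,\pmb\alpha)(u) \;=\; T(z^+,z^-)\circ H(z^-,\pmb\alpha)(u) \;-\; H(z^+,\pmb\alpha)\circ T(z^+,z^-)(u),
\]
noting that $\psi$ vanishes when $z^+=z^-$ and invoking Lemma~\ref{lem:EstimateXi2} to extract the factor $\norma{z^+-z^-}$, then restricting the resulting sum over families to $i>i_o$ because the families $i\le i_o$ do not cross the zero wave in the particular telescope dictated by Figure~\ref{fig:gamma}. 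Summing the three groups of contributions then yields~\eqref{eq:9}, with a constant depending only on $f$, $\Xi$ and a neighborhood of $\bar u$.
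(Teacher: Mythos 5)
Your proposal matches the paper's proof in all essentials: the paper also realizes $u_r$ and $u_*$ via the compositions in~\eqref{eq:11}, inserts two intermediate states $\check u$ and $\hat u$, and bounds the three consecutive differences by the Glimm commutator~\eqref{eq:7} (for the $H$--$H$ commutators on the $z^-$ side and again on the $z^+$ side) and by Lemma~\ref{lem:EstimateXi2} (for the $T$--$H$ commutator), exactly the three ``atomic estimates'' you identify.

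Two small points of imprecision in your description, neither of which affects the soundness of the underlying idea. First, you define $\psi$ with the full $\pmb\alpha$ and then propose to \emph{restrict} the resulting sum to $i>i_o$; but dropping terms from an upper bound is not a valid step. The correct logic, which is what the paper does, is that the telescope only ever needs to commute $T(z^+,z^-)$ past $H(z^-,\pmb\alpha'')$ --- the negative-speed part $\pmb\alpha'$ already sits to the left of the zero wave and is never carried across it --- so the map you feed into Lemma~\ref{lem:EstimateXi2} should be
\[
\phi(z^+,z^-,\pmb\alpha'') \;=\; T(z^+,z^-)\circ H(z^-,\pmb\alpha'')(\tilde u)\;-\;H(z^+,\pmb\alpha'')\circ T(z^+,z^-)(\tilde u)\,,
\]
which then automatically yields the factor $\norma{z^+-z^-}\sum_{i>i_o}\modulo{\alpha_i}$ with the sum over the positive-speed families built in from the start, rather than restricted after the fact. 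Second, Lemma~\ref{lem:app} is not the right tool for this $T$--$H$ commutator: it produces a bilinear bound in two wave vectors, whereas the map above has only one wave argument. Lemma~\ref{lem:EstimateXi2} alone (using $w_2=\pmb\alpha''$, $w_1=0$) is what is needed and what the paper invokes.
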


\begin{figure}[H]
  \centering
  \begin{tikzpicture}[line cap=round,line join=round,x=0.8cm,y=0.8cm]
    \draw [line width=0.5pt] (-8,0) -- (8,0); \draw [line width=0.5pt,
    line width=1pt] (-4,0) -- (-7,4); \draw [line width=0.5pt, line
    width=1pt] (-4,0) -- (-1,4); \draw [line width=0.5pt, line
    width=1pt] (4,0) -- (7,4); \draw [line width=0.5pt, line
    width=1pt] (4,0) -- (1,4); \draw [line width=0.5pt, line
    width=1pt] (4,0) -- (4,4); \draw (-1,4) node
    [label=above:{$\alpha''$}] {}; \draw (-7,4) node
    [label=above:{$\alpha'$}] {}; \draw (1,4) node
    [label=above:{$\beta'$}] {}; \draw (7,4) node
    [label=above:{$\beta''$}] {}; \draw (4,4) node
    [label=above:{$\Delta z$}] {}; \draw (-7,0) node
    [label=above:{$u=u_{\ell}$}] {}; \draw (3,3) node
    [label=above:{$z=z^-$}] {};\draw (5,3) node
    [label=above:{$z=z^+$}] {}; \draw (7,0) node
    [label=above:{$u=u_{r}$}] {};
  \end{tikzpicture}
  \begin{tikzpicture}[line cap=round,line join=round,x=0.8cm,y=0.8cm]
    \draw [line width=0.5pt] (-8,0) -- (8,0); \draw [line width=0.5pt,
    line width=1pt] (0,0) -- (3,4); \draw [line width=0.5pt, line
    width=1pt] (0,0) -- (-3,4); \draw [line width=0.5pt, line
    width=1pt] (0,0) -- (0,4); \draw [line width=0.5pt, line
    width=1pt] (0,0) -- (7,4); \draw (-3,4) node
    [label=above:{$\alpha'+\beta'$}] {}; \draw (3,4) node
    [label=above:{$\alpha''+\beta''$}] {}; \draw (0,4) node
    [label=above:{$\Delta z$}] {}; \draw (7,4) node
    [label=above:{$\gamma$}] {}; \draw (7,5) node [label=above:{$ $}]
    {}; \draw (-6,0) node [label=above:{$u=u_{\ell}$}] {}; \draw (6,0)
    node [label=above:{$u=u_{r}$}] {}; \draw (4,3) node
    [label=above:{$u=u_{*}$}] {}; \draw (-1.5,3) node
    [label=above:{$z=z^-$}] {};\draw (1.5,3) node
    [label=above:{$z=z^+$}] {};
  \end{tikzpicture}
  \caption{\small Notation used in Lemma~\ref{lem:inter}. $\gamma$
    denotes a fictitious wave separating the states $u_*$, as defined
    in~\eqref{eq:11}, and $u_r$. $\Delta z$ denotes the zero wave
    between $z^-$ and $z^+$.}
  \label{fig:gamma}
\end{figure}
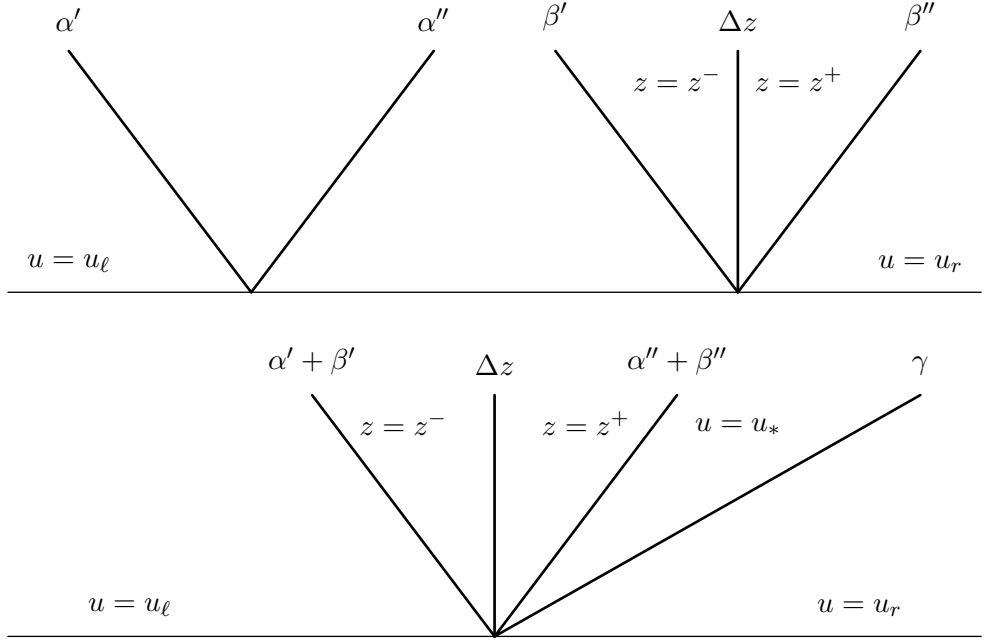

\begin{proof}
  Referring to Figure~\ref{fig:gamma}, we have:
  \begin{eqnarray}
    \nonumber
    u_r
    & =
    & H(z^+,\pmb\beta'')
      \circ
      T (z^+,z^-)
      \circ
      H (z^-,\pmb\beta')
      \circ
      H (z^-, \pmb\alpha'')
      \circ
      H (z^-,\pmb\alpha')
      (u_\ell)
    \\
    \label{eq:11}
    u_*
    & =
    & H (z^+,\pmb\alpha'' + \pmb\beta'')
      \circ
      T (z^+,z^-)
      \circ
      H (z^-,\pmb\alpha'+\pmb\beta') (u_\ell) \,.
  \end{eqnarray}
  Introduce
  \begin{eqnarray*}
    \check u
    & =
    & H (z^+,\pmb\beta'')
      \circ
      T (z^+,z^-)
      \circ
      H (z^-,\pmb\alpha'')
      \circ
      H (z^-,\pmb\beta')
      \circ
      H (z^-,\pmb\alpha') (u_\ell)
    \\
    \hat u
    & =
    & H (z^+,\pmb\beta'')
      \circ
      H (z^+,\pmb\alpha'')
      \circ
      T (z^+,z^-)
      \circ
      H (z^-,\pmb\beta')
      \circ
      H (z^-,\pmb\alpha') (u_\ell)
  \end{eqnarray*}
  so that
  \begin{displaymath}
    \norma{u_r - u_*}
    \leq
    \norma{u_r - \check u}
    +
    \norma{\check u - \hat u}
    +
    \norma{\hat u - u_*}
  \end{displaymath}
  and by Lemma~\ref{lem:inter}, setting
  $\tilde u = H (z^-,\pmb\alpha') (u_\ell)$,
  \begin{eqnarray*}
    \norma{u_r - \check u}
    & \leq
    & \O \,
      \norma{
      H (z^-,\beta') \circ H (z^-, \alpha'') (u)
      -
      H (z^-,\alpha'') \circ H(z^-,\beta') (u)}
    \\
    & =
    &
      \O \,
      \norma{
      H (z^-,\beta') \circ H (z^-, \alpha'') (u)
      -
      H (z^-,\alpha''+\beta') (u)}
    \\
    & \leq
    & \O \sum_{(i,j)\in \mathcal{A}_{\pmb\alpha,\pmb\beta}}
      \modulo{\alpha_i \, \beta_j} \,.
  \end{eqnarray*}
  Similarly, setting now
  $\tilde u = H (z^-, \beta') \circ H (z^-,\pmb\alpha') (u_\ell)$,
  \begin{displaymath}
    \norma{\check u - \hat u}
    \leq
    \O
    \norma{
      T (z^+,z^-) \circ H (z^-,\alpha'') (\tilde u)
      -
      H (z^-,\alpha'') \circ T (z^+,z^-) (\tilde u)
    }
  \end{displaymath}
  apply Lemma~\ref{lem:EstimateXi2} with $w_2 = \alpha''$, $w_1 = 0$
  and
  $\phi (z^+,z^-,\alpha'') = T (z^+,z^-) \circ H (z^-,\alpha'')
  (\tilde u) - H (z^-,\alpha'') \circ T (z^+,z^-) (\tilde u)$ to
  obtain
  \begin{equation}
    \label{eq:10}
    \norma{\check u - \hat u}
    \leq
    \O \,
    \norma{z^+-z^-} \, \sum_{i>i_o} \modulo{\alpha_i} \,.
  \end{equation}
  Finally, using~\eqref{eq:7} in Lemma~\ref{lem:milan},
  \begin{displaymath}
    \norma{\hat u - u_*}
    \leq
    \O \, \sum_{(i,j) \in \mathcal{A}_{\pmb\alpha,\pmb\beta}}
    \modulo{\alpha_i \, \beta_j} \,,
  \end{displaymath}
  completing the proof.
\end{proof}

Note that entirely similar estimates apply to the case where the
$\pmb\alpha$ waves are on the \emph{right} of the zero wave, i.e., in
the region where $z$ attains the value $z^+$.

\begin{lemma}
  \label{lem:superRP}
  Let $f$ satisfy~\ref{it:f1}--\ref{it:f4} and $\Xi$
  satisfy~\ref{eq:Xi1}, \ref{eq:Xi3}. Then, there exists a positive
  $\delta$ such that if $u_\ell,u_r \in \Omega$;
  $z^+, z^-\in \mathcal Z$ and $\pmb\alpha, \pmb\beta \in \reali^n$
  are such that
  \begin{displaymath}
    \norma{u_\ell-u_r} \leq \delta \,,\qquad
    \norma{z^+ - z^-} \leq \delta \,,\qquad
    \norma{\pmb\alpha} + \norma{\pmb\beta} \leq \delta
  \end{displaymath}
  with reference to Figure~\ref{fig:bo}, the following general
  interaction estimate holds:
  \begin{displaymath}
    \norma{\pmb\sigma - (\pmb\alpha+\pmb\beta)}
    \leq
    \O \left(
      \sum_{(i,j) \in \mathcal{A}_{\pmb\alpha,\pmb\beta}}
      \modulo{\alpha_i \, \beta_j}
      +
      \norma{z^+ - z^-} \sum_{i>i_o} \modulo{\alpha_i}
    \right)
  \end{displaymath}
\end{lemma}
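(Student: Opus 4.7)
The plan is to reduce this wave-level estimate to the state-level estimate already established in Lemma~\ref{lem:inter}, by exploiting the Lipschitz continuity of the map $E$ provided by Lemma~\ref{lem:RP}. Concretely, I will identify $\pmb\alpha + \pmb\beta$ as the waves solving a particular Generalized Riemann Problem, then bound the difference $\pmb\sigma - (\pmb\alpha + \pmb\beta)$ by the distance between the two right states of the Riemann problems.

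First, I would recall the auxiliary state $u_*$ from~\eqref{eq:11} in the proof of Lemma~\ref{lem:inter}:
\begin{displaymath}
  u_* = H(z^+, \pmb\alpha'' + \pmb\beta'') \circ T(z^+, z^-) \circ H(z^-, \pmb\alpha' + \pmb\beta')(u_\ell).
\end{displaymath}
By construction, $\pmb\alpha' + \pmb\beta'$ supports only the first $i_o$ families and $\pmb\alpha'' + \pmb\beta''$ only the last $n - i_o$, so this composition matches exactly the recursive scheme~\eqref{eq:2} defining the Admissible Solution of the Generalized Riemann Problem with endpoints $u_\ell$ and $u_*$ across the zero wave $z^- \to z^+$. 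Indeed, the jump condition in~\eqref{eq:2} is equivalent, by the characterization~\eqref{eq:1} in Lemma~\ref{lem:ExistenceT}, to the application of $T(z^+, z^-)$ at the middle state. Provided $\pmb\alpha$ and $\pmb\beta$ are sufficiently small (so $\norma{u_* - u_\ell} \leq \delta$), the uniqueness part of Lemma~\ref{lem:RP} yields
\begin{displaymath}
  \pmb\alpha + \pmb\beta = E(z^+, z^-, u_*, u_\ell).
\end{displaymath}

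Second, since $\pmb\sigma = E(z^+, z^-, u_r, u_\ell)$ by~\eqref{eq:39}, the Lipschitz continuity of $E$ in its third argument (Lemma~\ref{lem:RP}) gives
\begin{displaymath}
  \norma{\pmb\sigma - (\pmb\alpha + \pmb\beta)}
  = \norma{E(z^+, z^-, u_r, u_\ell) - E(z^+, z^-, u_*, u_\ell)}
  \leq \O \, \norma{u_r - u_*}.
\end{displaymath}
Applying Lemma~\ref{lem:inter} to estimate $\norma{u_r - u_*}$ immediately yields the desired inequality.

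The only non-routine point is the identification in the first step, which requires a careful comparison between the composition~\eqref{eq:11} and the recursion~\eqref{eq:2}; the Lipschitz step and the invocation of Lemma~\ref{lem:inter} are then immediate. Smallness of $\pmb\alpha, \pmb\beta, z^+ - z^-$ is used only to ensure that $u_*$ lies within the neighborhood on which Lemma~\ref{lem:RP} applies, which follows from Lemma~\ref{lem:AGG4} applied to the Riemann problem producing $u_*$ from $u_\ell$.
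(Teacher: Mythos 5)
Your proof is correct and follows the paper's argument exactly: define $u_*$ via~\eqref{eq:11}, identify $\pmb\alpha+\pmb\beta = E(z^+,z^-,u_*,u_\ell)$, invoke the Lipschitz continuity of $E$ from Lemma~\ref{lem:RP} to bound $\norma{\pmb\sigma-(\pmb\alpha+\pmb\beta)}$ by $\O\,\norma{u_r-u_*}$, and conclude with Lemma~\ref{lem:inter}. You spell out the identification step and the smallness of $\norma{u_*-u_\ell}$, which the paper leaves implicit.
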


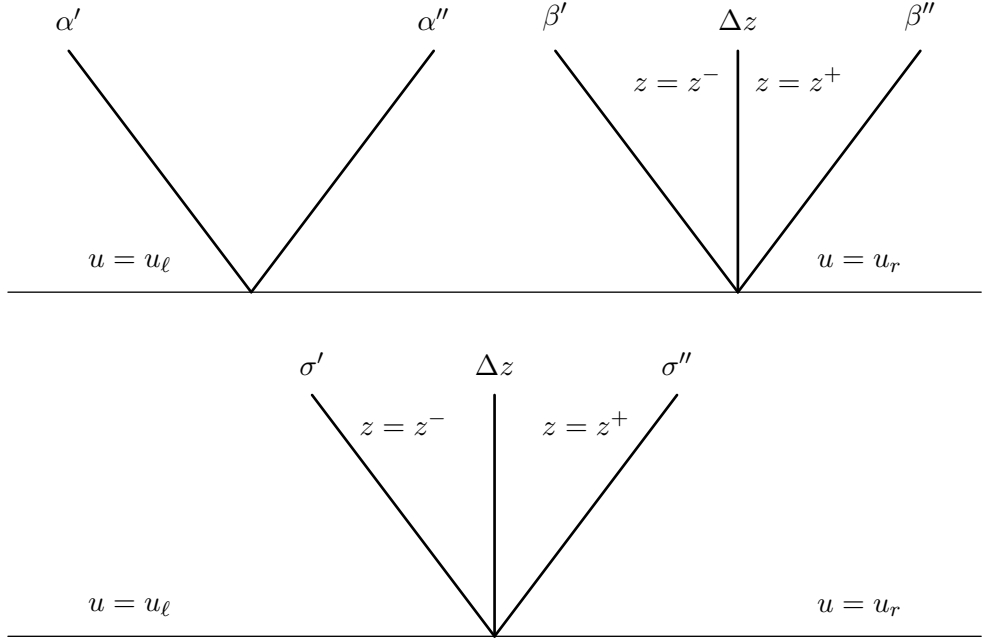
\begin{figure}[H]
  \centering
  \begin{tikzpicture}[line cap=round,line join=round,x=0.8cm,y=0.8cm]
    \draw [line width=0.5pt] (-8,0) -- (8,0); \draw [line width=0.5pt,
    line width=1pt] (-4,0) -- (-7,4); \draw [line width=0.5pt, line
    width=1pt] (-4,0) -- (-1,4); \draw [line width=0.5pt, line
    width=1pt] (4,0) -- (7,4); \draw [line width=0.5pt, line
    width=1pt] (4,0) -- (1,4); \draw [line width=0.5pt, line
    width=1pt] (4,0) -- (4,4); \draw (-1,4) node
    [label=above:{$\alpha''$}] {}; \draw (-7,4) node
    [label=above:{$\alpha'$}] {}; \draw (1,4) node
    [label=above:{$\beta'$}] {}; \draw (7,4) node
    [label=above:{$\beta''$}] {}; \draw (4,4) node
    [label=above:{$\Delta z$}] {}; \draw (-6,0) node
    [label=above:{$u=u_{\ell}$}] {}; \draw (6,0) node
    [label=above:{$u=u_{r}$}] {}; \draw (3,3) node
    [label=above:{$z=z^-$}] {};\draw (5,3) node
    [label=above:{$z=z^+$}] {};
  \end{tikzpicture}
  \begin{tikzpicture}[line cap=round,line join=round,x=0.8cm,y=0.8cm]
    \draw [line width=0.5pt] (-8,0) -- (8,0); \draw [line width=0.5pt,
    line width=1pt] (0,0) -- (3,4); \draw [line width=0.5pt, line
    width=1pt] (0,0) -- (-3,4); \draw [line width=0.5pt, line
    width=1pt] (0,0) -- (0,4); \draw (-3,4) node
    [label=above:{$\sigma'$}] {}; \draw (3,4) node
    [label=above:{$\sigma''$}] {}; \draw (0,4) node
    [label=above:{$\Delta z$}] {}; \draw (7,5) node
    [label=above:{$ $}] {}; \draw (-6,0) node
    [label=above:{$u=u_{\ell}$}] {}; \draw (6,0) node
    [label=above:{$u=u_{r}$}] {}; \draw (-1.5,3) node
    [label=above:{$z=z^-$}] {};\draw (1.5,3) node
    [label=above:{$z=z^+$}] {};
  \end{tikzpicture}
  \caption{\small Notation used in Lemma~\ref{lem:superRP}}
  \label{fig:bo}
\end{figure}

\begin{proof}
  Let $u_*$ be defined as in~\eqref{eq:11} and use the
  notation~\eqref{eq:39} to obtain:
  \begin{displaymath}
    \norma{\pmb\sigma - (\pmb\alpha+\pmb\beta)}
    \leq
    \norma{E (z^+,z^-,u_r,u_\ell) - E (z^+,z^-,u_*,u_\ell)}
    \leq
    \O \, \norma{u_r-u_*} \,.
  \end{displaymath}
  An application of Lemma~\ref{lem:inter} completes the proof.
\end{proof}

Lemma~\ref{lem:inter} and Lemma~\ref{lem:superRP} suggest that the
quantity $\norma{z^+-z^-}$ is a convenient way to measure the strength
of the zero--waves associated to the coupling condition. More
precisely, we define the strength of the zero--wave at a junction with
parameters $z^+, z^- \in \mathcal{Z}$ as $\sigma = \norma{z^+-z^-}$.

\begin{lemma}
  \label{lem:nonfis}
  Let $f$ satisfy~\ref{it:f1}--\ref{it:f4} and $\Xi$
  satisfy~\ref{eq:Xi1}, \ref{eq:Xi3}. Then, there exists a positive
  $\delta$ such that if $u_\ell,u_r \in \Omega$;
  $z^+, z^-\in \mathcal Z$ and $\pmb\alpha, \pmb\beta \in \reali^n$
  are such that
  \begin{displaymath}
    \norma{u_\ell-u_r} \leq \delta \,,\qquad
    \norma{z^+ - z^-} \leq \delta \,,\qquad
    \norma{\pmb\alpha} + \norma{\pmb\beta} \leq \delta
  \end{displaymath}
  with reference to Figure~\ref{figL:1}, the following general
  interaction estimate holds:
  \begin{displaymath}
    \modulo{\norma{u_r - u_*} - \norma{\hat u - u_\ell}}
    \leq
    \O \left(
      \sum_{j=1}^n  \modulo{\beta_j}
      +
      \norma{z^+ - z^-}
    \right)
    \norma{\hat u - u_\ell}
  \end{displaymath}
\end{lemma}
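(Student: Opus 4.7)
The plan is to express both $u_*$ and $u_r$ as images of a common transport map $\Phi$ applied, respectively, to $u_\ell$ and to $\hat u$, where $\Phi$ encodes the passage of the physical waves $\pmb\beta=\pmb\beta'+\pmb\beta''$ through the zero--wave associated to $(z^+,z^-)$. Concretely, based on the configuration in the figure, I would set
\begin{displaymath}
  \Phi(u)
  \;=\;
  H\!\left(z^+,\pmb\beta''\right)\,\circ\,T(z^+,z^-)\,\circ\,H\!\left(z^-,\pmb\beta'\right)(u) \,,
\end{displaymath}
where $T$ is the map constructed in Lemma~\ref{lem:ExistenceT}, so that $u_*=\Phi(u_\ell)$ and $u_r=\Phi(\hat u)$. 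The whole estimate then reduces to controlling $\Phi(\hat u)-\Phi(u_\ell)$ in terms of $\hat u-u_\ell$.

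The crucial observation is that when $\pmb\beta=0$ and $z^+=z^-$ the map $\Phi$ reduces to the identity, because $H(z,0)=\mathrm{Id}$ by definition of the Lax curves and $T(z,z)=\mathrm{Id}$ by Lemma~\ref{lem:ExistenceT}.1. Introduce the auxiliary map $\Psi(\pmb\beta,z^+,z^-,u):=\Phi(u)-u$. Then $\Psi$ vanishes identically in $u$ along the set $\{\pmb\beta=0\}\cap\{z^+=z^-\}$, hence so does $D_u\Psi$. By~\ref{it:f1} and by the regularity of $T$ supplied by Lemma~\ref{lem:ExistenceT} (inherited from~\ref{eq:Xi1}--\ref{eq:Xi2}), the map $\Psi$ is sufficiently smooth for a first order expansion at $\pmb\beta=0,\,z^+=z^-$ to yield
\begin{displaymath}
  \norma{D_u\Psi(\pmb\beta,z^+,z^-,u)}
  \;\leq\;
  \O\,\bigl(\norma{\pmb\beta}+\norma{z^+-z^-}\bigr) \,.
\end{displaymath}
Integrating this bound along the segment $[u_\ell,\hat u]$ gives
\begin{displaymath}
  \norma{(u_r-u_*)-(\hat u-u_\ell)}
  \;=\;
  \norma{\Psi(\pmb\beta,z^+,z^-,\hat u)-\Psi(\pmb\beta,z^+,z^-,u_\ell)}
  \;\leq\;
  \O\,\bigl(\textstyle\sum_{j}\modulo{\beta_j}+\norma{z^+-z^-}\bigr)\,\norma{\hat u-u_\ell} \,,
\end{displaymath}
and the claim follows from the reverse triangle inequality $\bigl|\norma{A}-\norma{B}\bigr|\leq \norma{A-B}$ with $A=u_r-u_*$, $B=\hat u-u_\ell$.

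The main obstacle is the smoothness question for $\Psi$: we are using that $D_u\Psi$ vanishes on a full submanifold of parameter space and is Lipschitz in the transverse directions. If joint $C^{1,1}$ regularity of $\Psi$ in all its arguments turns out to be delicate (recall that $\Xi$ is only Lipschitz in $(z^+,z^-)$ by~\ref{eq:Xi1}), I would fall back on the factorization $\Phi=H(z^+,\pmb\beta'')\circ T(z^+,z^-)\circ H(z^-,\pmb\beta')$ and estimate the three factors separately: Lemma~\ref{lem:EstimateXi2} applied to $\phi(z^+,z^-,u):=T(z^+,z^-)(u)-u$ produces the $\norma{z^+-z^-}$ contribution directly, while the two $H$--factors are smooth in $\pmb\beta$ with $D_u H(z,\pmb\beta)=I+\O(\norma{\pmb\beta})$ by the classical theory of Lax curves; composing the three Lipschitz-close-to-identity maps and recombining the error terms yields the same final bound.
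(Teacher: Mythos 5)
Your proposal is correct, and the ``fallback'' you describe is precisely the paper's proof: the authors split $\left(\Phi(u_\ell)-u_\ell\right)-\left(\Phi(\hat u)-\hat u\right)$ into a piece where $T(z^+,z^-)$ is subtracted off (a smooth function of $(\pmb\beta,u)$ vanishing for $\pmb\beta=0$ and for $u_\ell=\hat u$, handled by a second--order Taylor argument in the spirit of~\cite[\S~2.9]{Bressan2000}) and a piece $\left(T(z^+,z^-)(u_\ell)-u_\ell\right)-\left(T(z^+,z^-)(\hat u)-\hat u\right)$ handled by Lemma~\ref{lem:EstimateXi2} with $\phi(z^+,z^-,u)=T(z^+,z^-)(u)-u$, exactly as you write. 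Your ``primary'' unified expansion can also be made rigorous — $D_u\Psi$ is indeed Lipschitz jointly because $\Xi\colon\mathcal{Z}\times\mathcal{Z}\to\C2$ is Lipschitz by~\ref{eq:Xi1}, and it vanishes on $\{\pmb\beta=0,\,z^+=z^-\}$ — but you correctly sensed that the regularity bookkeeping is cleaner in the factored form, which is the route the paper takes.
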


  \begin{figure}[H]
    \centering
    \begin{tikzpicture}[line cap=round,line join=round,x=0.75cm,y=0.75cm]
      \draw [line width=0.5pt] (-8,0) -- (8,0); \draw [line
      width=0.5pt, line width=1pt] (-7,0) -- (0,4); \draw [line
      width=0.5pt, line width=1pt] (4,0) -- (7,4); \draw [line
      width=0.5pt, line width=1pt] (4,0) -- (1,4); \draw [line
      width=0.5pt, line width=1pt] (4,0) -- (4,4); \draw (0,4) node
      [label=above:{$\hat\gamma$}] {}; \draw (1,4) node
      [label=above:{$\beta'$}] {}; \draw (7,4) node
      [label=above:{$\beta''$}] {}; \draw (4,4) node
      [label=above:{$\Delta z$}] {}; \draw (-8,0) node
      [label=above:{$u=u_{\ell}$}] {}; \draw (0,0) node
      [label=above:{$u=\hat u$}] {}; \draw (6,0) node
      [label=above:{$u=u_{r}$}] {}; \draw (3,3) node
      [label=above:{$z=z^-$}] {};\draw (5,3) node
      [label=above:{$z=z^+$}] {};
    \end{tikzpicture}
    \begin{tikzpicture}[line cap=round,line join=round,x=0.75cm,y=0.75cm]
      \draw [line width=0.5pt] (-8,0) -- (8,0); \draw [line
      width=0.5pt, line width=1pt] (0,0) -- (3,4); \draw [line
      width=0.5pt, line width=1pt] (0,0) -- (-3,4); \draw [line
      width=0.5pt, line width=1pt] (0,0) -- (0,4); \draw [line
      width=0.5pt, line width=1pt] (0,0) -- (7,4); \draw (-3,4) node
      [label=above:{$\beta'$}] {}; \draw (3,4) node
      [label=above:{$\beta''$}] {}; \draw (0,4) node
      [label=above:{$\Delta z$}] {}; \draw (7,4) node
      [label=above:{$\gamma_{*}$}] {}; \draw (7,5) node
      [label=above:{$ $}] {}; \draw (-6,0) node
      [label=above:{$u=u_{\ell}$}] {}; \draw (6,0) node
      [label=above:{$u=u_{r}$}] {}; \draw (4,3) node
      [label=above:{$u=u_{*}$}] {}; \draw (-1.5,3) node
      [label=above:{$z=z^-$}] {};\draw (1.5,3) node
      [label=above:{$z=z^+$}] {};
    \end{tikzpicture}
    \caption{\small Notation used in Lemma~\ref{lem:nonfis}.}
    \label{figL:1}
  \end{figure}

\begin{proof}
  Referring to Figure~\ref{figL:1}, straightforward computations lead
  to:
  \begin{eqnarray}
    \nonumber
    &
    & \modulo{\norma{u_r - u_*} - \norma{u_\ell - \hat u}}
    \\
    \nonumber
    & \leq
    & \norma{ (u_r - u_*) - (u_\ell - \hat u) }
    \\
    \nonumber
    & =
    & \left\|
      \left(H (z^+,\beta'') \circ T (z^+,z^-)\circ H (z^-,\beta') (u_\ell) - u_\ell\right)
      \right.
    \\
    \nonumber
    &
    & -
      \left.
      \left(H (z^+,\beta'') \circ T (z^+,z^-)\circ H (z^-,\beta') (\hat u) - \hat u\right)
      \right\|
    \\
    \label{eq:13}
    & \leq
    & \left\|
      \left(H (z^+,\beta'') \circ T (z^+,z^-)\circ H (z^-,\beta') (u_\ell)
      -
      T (z^+,z^-) (u_\ell)\right)
      \right.
    \\
    \label{eq:14}
    &
    &
      \left.
      -
      \left(H (z^+,\beta'') \circ T (z^+,z^-)\circ H (z^-,\beta') (\hat u)
      -
      T (z^+,z^-) (\hat u)\right)
      \right\|
    \\
    \label{eq:15}
    &
    & +
      \norma{
      \left(T (z^+,z^-) (u_\ell) - u_\ell\right)
      -
      \left(T (z^+,z^-) (\hat u) - \hat u\right)}
    \\
    \nonumber
    & \leq
    & \O \left(
      \norma{u_\ell-\hat u} \sum_{j=1}^n \modulo{\beta_j}
      +
      \norma{u_\ell-\hat u} \, \norma{z^+-z^-}
      \right)
  \end{eqnarray}
  completing the proof. Above we used the fact that the term in the
  norm~\eqref{eq:13}--\eqref{eq:14} is a smooth function that vanishes
  for $u_\ell = \hat u$ as well as for $\beta=0$,
  see~\cite[\S~2.9]{Bressan2000}.  Moreover,
  Lemma~\ref{lem:EstimateXi2} can be applied to the
  term~\eqref{eq:15}, with $\phi (z^+,z^-,u) = T (z^+,z^-) (u) - u$.
\end{proof}

\begin{lemma}
  \label{lem:zetaPbR}
  Let $f$ satisfy~\ref{it:f1}--\ref{it:f4}, $\Xi$
  satisfy~\ref{eq:Xi1}--\ref{eq:Xi3}. Then, there exists a $\delta>0$
  such that if $\hat u^l, \hat u^r,\check u^l,\check u^r \in \Omega$,
  $\hat z^-, \hat z^-,\check z^+,\check z^+ \in \mathcal{Z}$ and
  \begin{displaymath}
    \norma{\hat z^+ - \hat z^-}
    + \norma{\hat u^l- \hat u^r}
    <
    \delta
    \,,\qquad  \qquad
    \norma{\check z^+ - \check z^-}
    + \norma{\check u^l- \check u^r}
    <
    \delta
  \end{displaymath}
  the solutions $\hat u$ and $\check u$ to the corresponding
  Generalized Riemann Problems~\eqref{eq:21} with data
  \begin{equation}
    \label{eq:71}
    \begin{array}{r@{\,}c@{\,}l@{\qquad\qquad}r@{\,}c@{\,}l}
      \hat u_o (x)
      & =
      & \left\{
        \begin{array}{cr@{\,}c@{\,}l}
          \hat u^l
          & x
          & <
          & \xi
          \\
          \hat u^r
          & x
          & \geq
          & \xi
        \end{array}
            \right.
          &  \hat \zeta (x)
          & =
          & \left\{
            \begin{array}{cr@{\,}c@{\,}l}
              \hat z^-
              & x
              & <
              & \xi
              \\
              \hat z^+
              & x
              & \geq
              & \xi
            \end{array}
                \right.
      \\
      \check u_o (x)
      & =
      & \left\{
        \begin{array}{cr@{\,}c@{\,}l}
          \check u^l
          & x
          & <
          & \xi
          \\
          \check u^r
          & x
          & \geq
          & \xi
        \end{array}
            \right.
          & \check \zeta (x)
          & =
          & \left\{
            \begin{array}{cr@{\,}c@{\,}l}
              \check z^-
              & x
              & <
              & \xi
              \\
              \check z^+
              & x
              & \geq
              & \xi
            \end{array}
                \right.
    \end{array}
  \end{equation}
  satisfy the estimate
  \begin{eqnarray*}
    &
    & \dfrac{1}{h}
      \int_{\xi-\hat \lambda \, h}^{\xi+\hat \lambda \, h}
      \norma{\hat u (h,x) - \check u (h,x)} \d{x}
    \\
    & \leq
    & \O
      \left(
      \norma{\hat u^l- \check u^l}
      + \norma{\hat u^r- \check u^r}
      + \norma{\hat u^r- \hat u^l}
      \left(
      \norma{\hat z^- - \check z^-}
      +
      \norma{\hat z^+ - \check z^+}
      \right)
      \right.
    \\
    &
    & \qquad\qquad
      \left.
      +
      \min\left\{
      \norma{\hat z^+ - \hat z^+}
      +
      \norma{\check z^+ - \check z^-}
      \,,\;
      \norma{\hat z^- - \check z^-}
      +
      \norma{\hat z^+ - \check z^+}
      \right\}
      \right) .
  \end{eqnarray*}
\end{lemma}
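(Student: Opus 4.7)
The plan is to bound the $L^1$ distance over the strip through a wave-by-wave comparison of the two generalized Riemann fans, exploiting the Lipschitz dependence of wave sizes and intermediate states provided by Lemma~\ref{lem:RP} together with the interaction and size estimates of Lemmas~\ref{lem:AGG4}--\ref{lem:superRP} and the coupling map $T$ from Lemma~\ref{lem:ExistenceT}.

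First I would insert an intermediate GRP with mixed data $(\hat z^-,\hat z^+,\check u^l,\check u^r)$, call its solution $\tilde u$, and split via the triangle inequality
\begin{equation*}
  \frac{1}{h}\int_{\xi-\hat\lambda h}^{\xi+\hat\lambda h} \norma{\hat u - \check u}\,\d x
  \leq \frac{1}{h}\int \norma{\hat u - \tilde u}\,\d x
      + \frac{1}{h}\int \norma{\tilde u - \check u}\,\d x.
\end{equation*}
In the first summand, $\hat u$ and $\tilde u$ share the fluxes $f(\hat z^\pm,\cdot)$ and the coupling map $T(\hat z^+,\hat z^-)$, so the wave sizes $(\sigma_1,\ldots,\sigma_n)$ and intermediate states $(w_1,\ldots,w_n)$ depend Lipschitz continuously on the only varying data $(u^l,u^r)$ by Lemma~\ref{lem:RP}. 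Applying the classical Bressan $L^1$--Lipschitz estimate to the two standard Lax Riemann problems on either side of $\xi$, joined through the Lipschitz map $T$, yields the bound $\O\,(\norma{\hat u^l-\check u^l}+\norma{\hat u^r-\check u^r})$.

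The second summand, with common $\check u^{l,r}$ and differing $z^\pm$, is the core of the proof. Parametrize both $\tilde u$ and $\check u$ by their wave sizes $\tilde\sigma_i,\check\sigma_i$, intermediate states $\tilde w_i,\check w_i$, and wave speeds $\tilde\lambda_i,\check\lambda_i$; then a standard Bressan-type $L^1$ decomposition for piecewise constant Riemann solutions gives
\begin{equation*}
  \frac{1}{h}\int \norma{\tilde u - \check u}\,\d x
  \leq \O \sum_i
    \left(\modulo{\tilde\sigma_i-\check\sigma_i}
    + \modulo{\tilde\lambda_i-\check\lambda_i}\cdot\modulo{\sigma_i}\right).
\end{equation*}
By \ref{it:f1} the speed differences are $\O\,(\norma{\hat z^--\check z^-}+\norma{\hat z^+-\check z^+})$, while by Lemma~\ref{lem:AGG4} the wave strengths satisfy $\modulo{\sigma_i}=\O\,(\norma{\check u^r-\check u^l}+\norma{z^+-z^-})$; together with $\norma{\check u^r-\check u^l}\leq\norma{\hat u^r-\hat u^l}+\O(\norma{\hat u^{l,r}-\check u^{l,r}})$ this produces the cross term $\norma{\hat u^r-\hat u^l}\bigl(\norma{\hat z^--\check z^-}+\norma{\hat z^+-\check z^+}\bigr)$. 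The remaining wave-size difference $\modulo{\tilde\sigma_i-\check\sigma_i}$ I would bound two complementary ways, giving the $\min$: \textbf{(B-branch)} directly from Lipschitz continuity of $E$ in Lemma~\ref{lem:RP}, yielding $\norma{\hat z^--\check z^-}+\norma{\hat z^+-\check z^+}$; \textbf{(A-branch)} by comparing each of $\tilde u,\check u$ to the degenerate GRP obtained by collapsing $z^+=z^-$ (which is the pure standard Riemann problem with data $\check u^{l,r}$ and hence common to both), where Lemma~\ref{lem:superRP} applied with $\pmb\alpha=0$ gives that each difference is bounded by $\O$ times its own $\norma{z^+-z^-}$, producing $\norma{\hat z^+-\hat z^-}+\norma{\check z^+-\check z^-}$. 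Taking the smaller of the two completes the estimate.

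\textbf{Main obstacle.} The delicate point is Step~3, specifically retaining the $\norma{\hat u^r-\hat u^l}$ prefactor in the cross term: a naive use of the uniform Lipschitz constant from Lemma~\ref{lem:RP} loses this factor and only produces $\O\,(\norma{\hat z^\pm-\check z^\pm})$. The sharpening requires carefully separating the wave-speed contribution from the wave-size contribution in the $L^1$ decomposition, and exploiting that the standard-family wave strengths scale with $\norma{u^r-u^l}+\norma{z^+-z^-}$ (Lemma~\ref{lem:AGG4}) rather than with a bare $\mathcal{O}(1)$ constant. The A-branch of the $\min$ is then the other non-routine ingredient, requiring the reduction to a common degenerate Riemann problem and a symmetric application of Lemma~\ref{lem:superRP} to both $\tilde u$ and $\check u$.
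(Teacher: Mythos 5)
Your overall strategy — insert an intermediate Riemann fan $\tilde u$ with data $(\hat z^\pm,\check u^{l,r})$ and split into a piece where only the $u$-data vary and a piece where only the $z$-data vary — is a legitimate alternative to the paper's argument, and the first summand is handled correctly. The paper itself does not pass through an auxiliary solution: it writes every quantity governing the fan (wave positions, intermediate states, rarefaction profiles) as a single map $G(z^-,z^+,u^l,u^r)$, Lipschitz in $z$ and $\C1$ in $u$, and then performs a \emph{double} telescoping, first subtracting $G(\cdot,\hat u^l,\hat u^l)$ to peel off the $\norma{\hat u^r-\hat u^l}$ prefactor, and then using that the remaining difference vanishes identically when $z^+=z^-$ and $u^l=u^r$ to obtain the $\min$. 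That structure is what you would have to reproduce, and it is precisely here that your argument has a gap.

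The problem is in your A-branch of the $\min$. When you collapse $z^+=z^-$, the resulting ``degenerate'' Riemann problem is \emph{not} the same for $\tilde u$ and $\check u$: one has flux $f(\hat z^-,\cdot)$, the other has flux $f(\check z^-,\cdot)$, and these generically differ. So comparing each of $\tilde u,\check u$ to ``the common standard Riemann problem with data $\check u^{l,r}$'' is not valid, and the A-branch as stated produces a false zero. The correct version of this step requires a further reduction: the residual difference between the two degenerate Riemann problems (with data $\check u^{l},\check u^r$ and fluxes $f(\hat z^-,\cdot)$, $f(\check z^-,\cdot)$) vanishes when $\check u^l=\check u^r$, so it is itself a cross term of order $\O\,\norma{\check u^r-\check u^l}\,\norma{\hat z^--\check z^-}$ — exactly the second telescope in the paper's $G$-argument. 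Without this observation, your $\min$ cannot be justified in the regime $\norma{\hat z^+-\hat z^-}+\norma{\check z^+-\check z^-}\ll\norma{\hat z^\pm-\check z^\pm}$. A related, smaller issue is that your displayed $L^1$ decomposition into $\modulo{\tilde\sigma_i-\check\sigma_i}+\modulo{\tilde\lambda_i-\check\lambda_i}\,\modulo{\sigma_i}$ omits the direct $z$-dependence of the intermediate states through the Lax curves $H_i(z,\cdot)$ and, more importantly, through the coupling map $T(z^+,z^-)$; the zero-wave jump $T(z^+,z^-)(w_{i_o})-w_{i_o}$ changes with the $z$'s even when all the $\sigma_i$ and $\lambda_i$ coincide, and this is precisely the part of the fan from which the $\min$ term originates. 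The paper's formulation — bounding the $L^1$ difference by a sum of terms, \emph{each} expressible as a difference $G(\hat z^-,\hat z^+,\hat u^l,\hat u^r)-G(\check z^-,\check z^+,\check u^l,\check u^r)$ — folds all of these contributions into a single Lipschitz/smooth object before telescoping, which is why it does not suffer from these omissions.
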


\begin{proof}
  The self similarity of the solutions to Riemann Problems ensures
  that
  \begin{displaymath}
    \dfrac{1}{h}
    \int_{\xi-\hat \lambda \, h}^{\xi+\hat \lambda \, h}
    \norma{\hat u (h,x) - \check u (h,x)} \d{x}
    =
    \int_{\xi-\hat \lambda}^{\xi+\hat \lambda}
    \norma{\hat u (1,\xi+\lambda) - \check u (1,\xi+\lambda)} \d{\lambda} \,.
  \end{displaymath}
  Recall that both $\lambda \mapsto \hat u (1,\xi+\lambda)$ and
  $\lambda \mapsto \check u (1,\xi+\lambda)$ consist of a sequence of
  constant states, jump discontinuities and Lipschitz continuous
  rarefaction profiles. Call
  $\hat p_1, \hat p_2, \ldots \hat p_{2n+2}$ the positions of waves in
  $\hat u$, in the sense that $\hat p_{2l-1} = \hat p_{2l}$ when a
  shock, a contact discontinuity or a zero wave in $\hat u$ is
  supported there; while $\hat p_{2l-1} < \hat p_l$ whenever a (non
  trivial) rarefaction in $\hat u$ is supported on
  $[\hat p_{2l-1}, \hat p_l]$.  Define
  $\check p_1, \check p_2, \ldots \check p_{2n+2}$ similarly, with
  reference to $\check u$.  The map $(z^-, z^+, u^l, u^r) \mapsto p$
  is Lipschitz in the $\hat z$ variables and smooth in the $u$
  variables.

  Set $\hat p_0 = \check p_0 = \xi-\hat \lambda$ and
  $\hat p_{2n+3} = \check p_{2n+3} = \xi+\hat \lambda$.
  Then,
  \begin{eqnarray*}
    &
    & \int_{\xi-\hat \lambda}^{\xi+\hat \lambda}
      \norma{\hat u (1,\xi+\lambda) - \check u (1,\xi+\lambda)} \d{\lambda}
    \\
    & \leq
    & \O \left(
      \sum_{i=1}^{2n+2}
      \modulo{\hat p_i - \check p_i}
      +
      \sum_{j=0}^{n+1}
      \norma{
      \hat u \left(1, \frac{\hat p_{2j}+\hat p_{2j+1}}{2}\right)
      -
      \check u \left(1, \frac{\check p_{2j}+\check p_{2j+1}}{2}\right)}
      \right.
    \\
    &
    & \qquad\qquad +
      \left.
      \sum_{j=1}^{n+1}
      \int_{[\hat p_{2j-1}, \hat p_{2j}] \cap [\check p_{2j-1}, \check p_{2j}]}
      \norma{\hat u (1,x) - \check u (1,x)} \d{x}
      \right) \,.
  \end{eqnarray*}
  Above, each of the quantities $\hat p_i - \check p_i$,
  $\hat u \left(1, \frac{\hat p_{2j}+\hat p_{2j+1}}{2}\right) - \check
  u \left(1, \frac{\check p_{2j}+\check p_{2j+1}}{2}\right)$ and
  $\hat u (1,x) - \check u (1,x)$ can be written as a difference
  $G (\hat z^-, \hat z^+, \hat u^l, \hat u^r) - G (\check z^-, \check
  z^+, \check u^l, \check u^r)$, the function $G$ being Lipschitz
  continuous in $z$ and smooth in $u$. Hence,
  \begin{eqnarray*}
    &
    & \norma{
      G (\hat z^-, \hat z^+, \hat u^l, \hat u^r) -
      G (\check z^-, \check z^+, \check u^l, \check u^r)}
    \\
    & \leq
    & \norma{
      G (\hat z^-, \hat z^+, \hat u^l, \hat u^r) -
      G (\check z^-, \check z^+, \hat u^l, \hat u^r)}
      +
      \norma{
      G (\check z^-, \check z^+, \hat u^l, \hat u^r) -
      G (\check z^-, \check z^+, \check u^l, \check u^r)}
    \\
    & \leq
    & \norma{
      G (\hat z^-, \hat z^+, \hat u^l, \hat u^r) -
      G (\check z^-, \check z^+, \hat u^l, \hat u^r)} +
      \O\left(
      \norma{\check u^l - \hat u^l} +
      \norma{\check u^r - \hat u^r}
      \right) \,.
  \end{eqnarray*}
  Moreover,
  \begin{eqnarray*}
    &
    & \norma{
      G (\hat z^-, \hat z^+, \hat u^l, \hat u^r) -
      G (\check z^-, \check z^+, \hat u^l, \hat u^r)}
    \\
    & \leq
    & \norma{
      \left(
      G (\hat z^-, \hat z^+, \hat u^l, \hat u^r) -
      G (\check z^-, \check z^+, \hat u^l, \hat u^r)
      \right)
      -
      \left(
      G (\hat z^-, \hat z^+, \hat u^l, \hat u^l) -
      G (\check z^-, \check z^+, \hat u^l, \hat u^l)
      \right)
      }
    \\
    &
    & +
      \norma{
      \left(
      G (\hat z^-, \hat z^+, \hat u^l, \hat u^l) -
      G (\check z^-, \check z^+, \hat u^l, \hat u^l)
      \right)
      -
      \underbrace{
      \left(
      G (\hat z^-, \hat z^-, \hat u^l, \hat u^l) -
      G (\check z^-, \check z^-, \hat u^l, \hat u^l)
      \right)}_{=0}
      }
    \\
    & \leq
    & \modulo{
      \int_{\hat u^l}^{\hat u^r}
      \norma{
      D_4 G (\hat z^-, \hat z^+, \hat u^l, w) -
      D_4 G (\check z^-, \check z^+, \hat u^l, w)
      }\d{w}
      }
    \\
    &
    & + \O \min\left\{
      \norma{\hat z^+ - \hat z^+}
      +
      \norma{\check z^+ - \check z^-}
      \,,\;
      \norma{\hat z^- - \check z^-}
      +
      \norma{\hat z^+ - \check z^+}
      \right\}
    \\
    & \leq
    & \O \norma{\hat u^r - \hat u^l}
      \left(
      \norma{\hat z^- - \check z^-}
      +
      \norma{\hat z^+ - \check z^+}
      \right)
    \\
    &
    & + \O \min\left\{
      \norma{\hat z^+ - \hat z^+}
      +
      \norma{\check z^+ - \check z^-}
      \,,\;
      \norma{\hat z^- - \check z^-}
      +
      \norma{\hat z^+ - \check z^+}
      \right\} \,,
  \end{eqnarray*}
  completing the proof.
\end{proof}

\subsection{The Case $\zeta \in \PC (\reali; \reali^p)$}

\subsubsection{Wave Front Tracking}
Fix a $\zeta \in (\PC \cap \BV) (\reali; \reali^p)$, $\mathcal{I} (z)$
being the set of points of jump in $z$. Let
$u \in \PC (\reali; \Omega)$ and call $\mathcal{I} (u)$ the set of
points of jump in $u$.  Let $\sigma_{x,i}$ be the (signed) strength of
the $i$--th wave in the solution to the Riemann problem
for~\eqref{eq:21} with data $u(x-)$ and $u(x+)$,
i.e.~$(\sigma_{ x,1} , \ldots, \sigma_{ x,n} ) = E \left(\zeta (x+),
  \zeta (x-), u(x+), u(x-)\right)$ as in~\eqref{eq:39}.  Define
\begin{equation}
  \label{eq:5}
  \mathcal{A}^\zeta (u)
   =
  \left\{
    \begin{array}{l}
      \left((x,i),(y,j)\right) \in
      \left(\left(\mathcal{I} (u) \cup \mathcal{I}(\zeta)\right)
      \times \{1, \ldots, n\}\right)^2
      \colon
      \\
      x<y \mbox{ and either } i>j
      \mbox{ or $i=j$, the $i$--th field is}
      \\
      \mbox{genuinely non linear and } \min\{\sigma_{x,i}, \sigma_{y,i}\} < 0
    \end{array}
  \right\}
\end{equation}
Extending what introduced in~\cite{ColomboGuerra2008}, the linear and
the interaction potential are
\begin{eqnarray}
  \nonumber
  \mathbf{V}^\zeta(u)
  &  =
  & \sum_{x\in \mathcal{I}(u) \cup \mathcal{I} (\zeta)}
    \sum_{i=1}^n
    \modulo{\sigma_{x,i}}
    +
    \sum_{x\in \mathcal{I}(\zeta)}
    \norma{\Delta\zeta (x)}
  \\
  \nonumber
  \mathbf{Q}^\zeta(u)
  &  =
  & \sum_{\left((x,i),(y,j)\right) \in \mathcal{A}^\zeta(u)}
    \modulo{\sigma_{x,i}\sigma_{y,j}}
  \\
  \nonumber
  &
  & \quad
    +
    \sum_{x \in \mathcal{I} (\zeta)}
    \norma{\Delta\zeta (x)}
    \left(
    \sum_{y \in \mathcal{I} (u)\,,\, y<x} \; \sum_{j > i_0} \modulo{\sigma_{y,j}}
    +
    \sum_{y \in \mathcal{I} (u)\,,\, y>x} \; \sum_{j \leq i_0} \modulo{\sigma_{y,j}}
    \right)
  \\
  \label{eq:4}
  \mathbf{\Upsilon}^\zeta (u)
  &  =
  & \mathbf{V}^\zeta (u) + C_0 \cdot \mathbf{Q}^\zeta (u)
\end{eqnarray}
where $C_0$ is a suitable positive constant. For $\delta>0$
sufficiently small, we define
\begin{equation}
  \label{def:2.6}
  \mathcal{D}_\delta^\zeta
   =
  \mathrm{cl} \left\{
    u \in \bar{u} + \L1\left(\reali,\Omega \right) \colon
    u \hbox { is piecewise constant and } \mathbf{\Upsilon}^\zeta(u) < \delta
  \right\}
\end{equation}
where the closure is in the strong $\L1$--topology.

We adapt the wave-front tracking techniques
from~\cite{AmadoriGosseGuerra2002, Bressan2000, ColomboGuerraHolle,
  ColomboMarcellini2010, GuerraMarcelliniSchleper2009} to construct a
sequence of approximate solutions to the Cauchy problem~\eqref{eq:21}
and prove uniform $\BV$-estimates in space. The approximate solutions
converge towards a solution to the Cauchy problem with finitely many
junctions. First, we define the approximations.

\begin{definition}
  \label{def:WFTApproxSol}
  Let $\zeta \in \BV (\reali; \mathcal{Z})$ be piecewise constant.
  For $\epsilon>0$, a continuous map
  \begin{equation*}
    u^\epsilon \colon \left[0, +\infty\right[ \to \Lloc1(\reali; \reali^n)
  \end{equation*}
  is an $\epsilon$-approximate solution to~\eqref{eq:21} if the
  following conditions hold:
  \begin{itemize}
  \item $u^\epsilon$, as a function of $(t,x)$, is piecewise constant
    with discontinuities along finitely many straight lines in the
    $(t,x)$-plane. There are only finitely many wave-front
    interactions and at most two waves interact with each other. There
    are four types of discontinuities: shocks (or contact
    discontinuities), rarefaction waves, non--physical waves and
    zero--waves. We distinguish these waves' indexes in the sets
    $\mathcal J =\mathcal{S} \cup \mathcal{R} \cup
    \mathcal{N}\mathcal{P} \cup \mathcal{Z}\mathcal{W}$, the generic
    index in $\mathcal{J}$ being $\alpha$.
  \item At a shock (or contact discontinuity) $x_\alpha=x_\alpha(t)$,
    $\alpha\in\mathcal S$, the traces $u^+=u^\epsilon (t,x_\alpha+)$
    and $u^-=u^\epsilon (t,x_\alpha-)$ are related by
    $u^+=H_{i_\alpha}(\zeta(x_\alpha),\sigma_\alpha)(u^-)$ for
    $1\le i_\alpha\le n$, see~\eqref{eq:6}. If the $i_\alpha$--th
    family is genuinely nonlinear, the admissibility condition
    $\sigma_\alpha<0$ holds and
    \begin{equation}
      \label{eq:59}
      \modulo{\dot x_\alpha -\lambda_{i_\alpha}(\zeta(x_\alpha),u^+,u^-)}
      \le \epsilon,
    \end{equation}
    where $\lambda_{i_\alpha}(\zeta(x_\alpha),u^+,u^-)$ is the wave
    speed described by the Rankine-Hugoniot conditions w.r.t.\
    $u\mapsto f(\zeta(x_\alpha),u)$.
  \item On the sides of a rarefaction wave $x_\alpha = x_\alpha(t)$,
    $\alpha\in\mathcal R$ in a genuinely nonlinear family, the traces
    are related by
    $u^+ = H_{i_\alpha}(\zeta(x_\alpha),\sigma_\alpha)(u^-)$ where
    $1 \le i_\alpha \le n$ and $0 < \sigma_\alpha \le
    \epsilon$. Moreover,
    \begin{equation*}
      \modulo{\dot x_\alpha -\lambda_{i_\alpha}(\zeta(x_\alpha),u^+)} \le \epsilon.
    \end{equation*}
  \item All non--physical fronts $x = x_\alpha(t)$,
    $\alpha \in \mathcal{NP}$ travel at the same speed
    $\dot x_\alpha = \hat\lambda$ with
    $\hat\lambda > \sup_{z,u,i} \modulo{\lambda_i (z,u)}$. The total
    strength of all non--physical fronts is uniformly bounded by
    \begin{equation*}
      \sum_{\alpha\in\mathcal{NP}}
      \norma{u^\epsilon(t,x_\alpha +)-u^\epsilon(t,x_\alpha -)}
      \le
      \epsilon
      \quad \mbox{for all } t > 0 \,.
    \end{equation*}
  \item Zero--waves are located at the discontinuities
    $x_\alpha \in \mathcal{I} (\zeta)$. At a zero--wave $x_\alpha$,
    $\alpha \in \mathcal{ZW}$, the traces are related by the coupling
    condition
    $u^+=T\left(\zeta (x_\alpha+), \zeta (x_\alpha)\right)(u^-)$ for
    all $t>0$, see~\eqref{eq:1}, except at the interaction times.
  \item At time $t=0$, $u^\epsilon$ satisfies
    $\norma{u^\epsilon(0,\cdot)-u_o}_{\L1(\reali;\reali^n)} \le
    \epsilon$.
  \end{itemize}
\end{definition}

\begin{proposition}[{\cite[Theorem~4.11]{ColomboGuerraHolle}}]
  \label{thm:ExistenceEpsilonApprox}
  Let $f$ satisfy~\ref{it:f1}--\ref{it:f4} and $\Xi$
  satisfy~\ref{eq:Xi1}--\ref{eq:Xi3}. Fix $\bar z \in \mathcal{Z}$ and
  $\bar u \in \Omega$. Then, there exist $\delta, K > 0$ such that for
  all piecewise constant $\zeta \in \BV (\reali; \mathcal{Z})$ with
  \begin{equation}
    \label{eq:23}
    \zeta (\reali) \subseteq B (\bar z; \delta) \quad \mbox{ and } \quad
    \tv (\zeta) < \delta
  \end{equation}
  and for all initial data $u_o \in \mathcal{D}^\zeta_\delta$,
  for every $\epsilon$ sufficiently small there exists an
  $\epsilon$--approximate solution to~\eqref{eq:21} in the sense of
  Definition~\ref{def:WFTApproxSol}. Moreover, the total variation in
  space $\tv(u^\epsilon(t,\cdot))$ and the total variation in time
  $\tv(u^\epsilon(\cdot,x))$ are bounded uniformly for $\epsilon$
  sufficiently small, i.e., for all $t>0$ and for all $x \in \reali$
  \begin{displaymath}
    \mathbf \Upsilon^\zeta (u^\epsilon(t,\cdot)) \leq \delta + K \, \epsilon
    \quad \mbox{ and } \quad
    \tv(u^\epsilon(\cdot,x)) \leq K \,.
  \end{displaymath}
\end{proposition}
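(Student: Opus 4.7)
The plan is to construct the $\epsilon$--approximate solutions by the wave front tracking algorithm, following the schemes in \cite{ColomboGuerraHolle,AmadoriGosseGuerra2002,Bressan2000} but with an interaction functional adapted to the space--dependent flux. First I would approximate $u_o \in \mathcal{D}^\zeta_\delta$ by a piecewise constant function $u^\epsilon(0,\cdot)$ with $\L1$--error at most $\epsilon$ and with $\mathbf{\Upsilon}^\zeta(u^\epsilon(0,\cdot)) \leq \mathbf{\Upsilon}^\zeta(u_o) + \O \, \epsilon$, and solve a Generalized Riemann Problem at each jump of $u^\epsilon(0,\cdot)$ and at each $\bar x \in \mathcal{I}(\zeta)$ (well--posed by Lemma~\ref{lem:RP}), replacing each rarefaction by a fan of small discontinuities of strength $\leq \epsilon$ and using Bressan's Simplified Riemann Solver (producing non--physical fronts of speed $\hat\lambda$) whenever either the product of incoming strengths or the generation order exceeds a threshold. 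Zero--waves stay anchored at the points of $\mathcal{I}(\zeta)$ and only their right state is updated when another wave hits them.

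Next I would show that, across any interaction time $\bar t$, the functional $\mathbf{\Upsilon}^\zeta$ is non--increasing. For two physical waves $\pmb\alpha,\pmb\beta$ meeting away from $\mathcal{I}(\zeta)$, this is the classical Glimm estimate via Lemma~\ref{lem:milan}. For a wave $\pmb\alpha$ hitting a zero--wave of strength $\norma{z^+-z^-}$ (with outgoing waves $\pmb\sigma$), Lemma~\ref{lem:superRP} gives
\begin{displaymath}
  \norma{\pmb\sigma-(\pmb\alpha+\pmb\beta)}
  \leq
  \O
  \Bigl(\sum_{(i,j)\in\mathcal A_{\pmb\alpha,\pmb\beta}}\!\!\modulo{\alpha_i\beta_j}
  +
  \norma{z^+-z^-}\sum_{i>i_o}\modulo{\alpha_i}\Bigr).
\end{displaymath}
The key point is that the last summand, which is the new feature compared with the homogeneous case, is exactly compensated by the decrease of the mixed term
\begin{displaymath}
  \sum_{x\in\mathcal I(\zeta)}\norma{\Delta\zeta(x)}\Bigl(\sum_{y<x,\,j>i_o}\modulo{\sigma_{y,j}}+\sum_{y>x,\,j\leq i_o}\modulo{\sigma_{y,j}}\Bigr)
\end{displaymath}
in $\mathbf{Q}^\zeta$ when $\pmb\alpha$ crosses the zero--wave (positive families disappear from the left side, negative families from the right side). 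Choosing $C_0$ large enough so that the quadratic decrease in $\mathbf{Q}^\zeta$ dominates the linear increase in $\mathbf{V}^\zeta$, one obtains $\Delta \mathbf{\Upsilon}^\zeta(\bar t) \leq 0$ for every interaction of physical waves, and Lemma~\ref{lem:nonfis} handles the analogous estimate for interactions producing or involving non--physical fronts, showing that the total non--physical strength stays below $\epsilon$. Smallness of $\delta$ guarantees $\mathbf{\Upsilon}^\zeta(u^\epsilon(t,\cdot)) \leq \delta + K\epsilon$ for all $t$, and Lemma~\ref{lem:AGG4} then converts this into the desired bound on $\tv(u^\epsilon(t,\cdot))$.

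The main obstacle is proving that only finitely many interactions occur in any bounded time interval. I would follow the standard argument of Bressan: assign to each physical front a generation order, show that the accurate Riemann solver is used only for small--order interactions (whose number can be bounded by $\mathbf{Q}^\zeta$--decrease, now including the cross terms with $\norma{\Delta\zeta}$), while the simplified solver is triggered otherwise and only produces one non--physical outgoing front whose strength is controlled by Lemma~\ref{lem:inter}. Interactions between a non--physical front and a physical one only modify one state and do not create new physical fronts; interactions of waves with zero--waves are a priori bounded by the finite cardinality of $\mathcal I(\zeta)$ times the number of crossings, the latter being finite because each crossing strictly decreases $\mathbf{Q}^\zeta$ by a controlled amount. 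These accounting arguments, standard once the interaction estimates above are in place, yield finiteness.

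Finally, the bound $\tv(u^\epsilon(\cdot,x))\leq K$ in time follows from the spatial one by the usual argument: at each fixed $x$, a jump in $t\mapsto u^\epsilon(t,x)$ is produced by a wave crossing the vertical line through $x$, and the sum of the moduli of the jumps is dominated by $\hat\lambda^{-1}$ times a finite sum of wave strengths at the crossing times. Since $\mathbf V^\zeta$ is uniformly bounded and the decrease of $\mathbf{\Upsilon}^\zeta$ controls the cumulative strength of waves ever generated, one obtains $\tv(u^\epsilon(\cdot,x))\leq K$ uniformly in $\epsilon$ and $x$, completing the proof.
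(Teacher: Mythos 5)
Your proposal is correct and follows essentially the same strategy as the paper: a wave--front--tracking construction whose convergence hinges on the decrease of the adapted Glimm functional $\mathbf{\Upsilon}^\zeta$, with the interaction estimates supplied by Lemma~\ref{lem:inter}, Lemma~\ref{lem:superRP} and Lemma~\ref{lem:nonfis}, and conversion to a $\tv$ bound via Lemma~\ref{lem:AGG4}. The paper's own proof is only a brief sketch deferring to~\cite{ColomboGuerraHolle} and~\cite{BaitiJenssen1998} (treating zero--waves as linearly degenerate waves), so your proposal simply fleshes out the same argument in more detail.
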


\begin{proofof}{Proposition~\ref{thm:ExistenceEpsilonApprox}}
  We use here the well known wave front tracking algorithm originally
  introduced in~\cite{DafermosWFT} and adapted to the present
  situation in~\cite{AmadoriGosseGuerra2002}, see
  also~\cite{BaitiJenssen1998, Bressan2000, BressanLiuYang1999,
    Dafermos2000, MR1912206}. Indeed, waves supported in the points of
  jump of $\zeta$, that is the \emph{zero} waves indexed in
  $\mathcal{Z}\mathcal{W}$, behave as linearly degenerate waves from
  the point of view of the wave front tracking algorithm developed
  in~\cite{BaitiJenssen1998}, to which we refer also for the
  terminology. Remark that Lemma~\ref{lem:inter},
  Lemma~\ref{lem:superRP} and Lemma~\ref{lem:nonfis} allow to extend
  to interactions involving zero waves estimates of the same form as
  those typically used in standard wave front tracking procedures.

  We refer to~\cite[Theorem~4.11]{ColomboGuerraHolle} for more
  details.
\end{proofof}

\subsubsection{An Extended \emph{Almost--Decreasing} Functional}

To prove the Lipschitz continuous dependence of solutions on the
initial datum, we introduce a functional uniformly equivalent to the
$\L 1(\reali,\reali^n)$--distance~\cite{BressanLiuYang1999}. We follow
the considerations in~\cite[Section~4.2]{AmadoriGosseGuerra2002}.

Let $u$, respectively $v$, be an $\epsilon$--approximate, respectively
$\epsilon'$--approximate, solutions as in
Proposition~\ref{thm:ExistenceEpsilonApprox} with the same piecewise
constant $\zeta \in \BV(\reali;\mathcal Z)$ as in~\eqref{eq:24}. The
functions $u (0, \cdot)$ and $v (0,\cdot)$ do not necessarily
coincide.  Introduce the concatenation of shock curves
\begin{equation}
  \label{eq:3}
  S(z,\pmb{q})(u)
   =
  S_{n}(z,q_{n})
  \circ \dots \circ
  S_1(z,q_1)(u)
\end{equation}
where $q_i \mapsto S_i(z,q_i)$ are the shock curves with respect to
the flux function $u \mapsto f(z,u)$ possibly violating the
admissibility condition.  We define
$\mathbf{q} (z,u,v) \equiv (\mathrm{q}_1, \ldots,
\mathrm{q}_n)(z,u,v)$ implicitly by
\begin{equation}
  \label{eq:69}
  v
  =
  S \left(z,\mathbf q (z,u,v) \right) (u)
\end{equation}
and the $i$--th shock speed, with the same notation as
in~\eqref{eq:59},
\begin{equation}
  \label{eq:70}
  \begin{array}{rcl}
    \Lambda_i (z,u,v)
    &  =
    & \lambda_i
      \left(
      z,
      S_{i}\left(z,\mathrm{q}_{i} (z,u,v)\right)
      \circ \dots \circ
      S_1\left(z,\mathrm{q}_1 (z,u,v)\right) (u)
      ,
      \right.
    \\
    &
    & \qquad\qquad
      \left.
      S_{i-1}\left(z,\mathrm{q}_{i-1} (z,u,v)\right)
      \circ \dots \circ
      S_1\left(z, \mathrm{q}_1 (z,u,v)\right) (u)
      \right) \,.
    \\
  \end{array}
\end{equation}
For sufficiently small $\mathbf{q} (z,u,v)$ and for $z$ in a small
neighborhood of $\bar z$, we have
\begin{displaymath}
  \frac{1}{C} \, |u-v|
  \le
  \sum_{i=1}^n \modulo{\mathrm{q}_i (z,u,v)}
  \le C \, |u-v|
\end{displaymath}
for a constant $C > 1$.  We define the following functional equivalent
to the $\L 1(\reali;\reali^n)$ distance:
\begin{eqnarray}
  \nonumber
  q_i (t,x)
  &  =
  & \mathrm{q}_i\left(\zeta (x), u (t,x), v (t,x)\right)
    \qquad i=1, \ldots, n \,,
  \\
  \label{eq:36}
  \Phi(u,v)(t)
  &  =
  & \sum_{i=1}^n\int_{\reali} \modulo{q_i(t,x)} \, W_i(t,x)\d x \,,
  \\
  \label{eq:72}
  W_i(t,x)
  &  =
  & 1
    +
    \kappa_1 \, B_i(t,x)
    +
    \kappa_2 \left( Q(u,t)+Q(v,t)\right),
  \\
  \label{eq:17}
  B_i(t,x)
  &  =
  & A_i(t,x)
    +
    \begin{cases}
      \sum_{x_\alpha < x,\alpha\in\mathcal {ZW}} |\sigma_\alpha|
      \qquad &\mbox{ if } i\le i_o \,,
      \\
      \sum_{x_\alpha > x,\alpha\in\mathcal {ZW}} |\sigma_\alpha|
      \qquad &\mbox{ if } i> i_o \,,
    \end{cases}
\end{eqnarray}
with positive $\kappa_1,\kappa_2$, chosen below and with $A_i$ defined
as in~\cite[Formula~(4.9)]{AmadoriGosseGuerra2002},
\cite[Formul\ae~(8.8)--(8.9)]{Bressan2000}
or~\cite[Formul\ae~(2.17)--(2.18)]{BressanLiuYang1999} and $Q$ is the
usual Glimm interaction potential for piecewise constant approximate
solutions, see~\cite[Formula~(7.54)]{Bressan2000}, also including all
zero waves. We follow~\cite{Bressan2000,BressanLiuYang1999} and ensure
that whatever the values of the constants $\kappa_1,\kappa_2$, the
parameter $\delta > 0$ in~\eqref{eq:23} can be reduced so that
$1 \leq W_i(t,x) \leq 2$.

We obtain the following result by the same procedure as in the proof
of~\cite[Lemma~9]{AmadoriGosseGuerra2002}. Observe that all arguments
hold for $f(\zeta(x),\cdot)$ instead of $f$ by~\ref{it:f1} and by the
smallness of $\tv(\zeta)$.

\begin{lemma}
  \label{lem:pocosu}
  Let $f$ satisfy~\ref{it:f1}--\ref{it:f4} and $\Xi$
  satisfy~\ref{eq:Xi1}--\ref{eq:Xi3}. Fix $\bar z \in \mathcal{Z}$ and
  $\bar u \in \Omega$.  There exist suitable positive
  $\kappa_1,\kappa_2,\delta$ such that if $\zeta$ is piecewise
  constant and satisfies~\eqref{eq:23}, $u$ is an
  $\epsilon$--approximate solution and $v$ is an
  $\epsilon'$--approximate solution as in
  Theorem~\ref{thm:ExistenceEpsilonApprox}, both corresponding to
  $\zeta$, with $u (0, \cdot)$ and $v (0,\cdot)$ in
  $\mathcal{D}_\delta^\zeta$, as defined in~\eqref{def:2.6}, then the
  functional $\Phi$ satisfies for all $0\le s\le t$
  \begin{equation*}
    \Phi(u,v) (t)
    -
    \Phi(u,v) (s)
    \leq
    \O \; \max\{\epsilon,\epsilon'\} \; (t-s) \,.
  \end{equation*}
\end{lemma}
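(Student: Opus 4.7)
The plan is to follow, almost verbatim, the classical proof of the almost-decrease of the Bressan--Liu--Yang weighted functional given in \cite[Lemma~9]{AmadoriGosseGuerra2002} and \cite[\S~8.3]{Bressan2000}, adapting it to account for the presence of the stationary zero waves located at $\mathcal{I}(\zeta)$. First I would verify that $\Phi$ is well defined and uniformly equivalent to $\norma{u(t,\cdot)-v(t,\cdot)}_{\L1(\reali;\reali^n)}$: the bilateral estimate between $\sum_i\modulo{\mathrm{q}_i(z,u,v)}$ and $\norma{u-v}$ in a small neighborhood of $\bar z$, together with the bound $1\leq W_i(t,x)\leq 2$ obtained by reducing $\delta$ after having fixed $\kappa_1,\kappa_2$, does the job.

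Since $u$ and $v$ are piecewise constant with finitely many fronts and finitely many interactions, $\Phi(u,v)(\cdot)$ is piecewise affine in time with breakpoints exactly at interaction times. Between two consecutive such times I would write $\frac{d}{dt}\Phi(u,v)$ as a sum, over all fronts of $u$ and $v$, of the algebraic jump of $\sum_i\modulo{q_i}W_i$ across the front, multiplied by the front speed and corrected by the shock speed $\Lambda_i$ defined in~\eqref{eq:70}. For physical waves in $u$ or $v$ the computation reduces to the one performed in \cite[\S~8.3]{Bressan2000}, since the coefficients of $f(\zeta(x),\cdot)$ are locally constant in $x$: each shock or rarefaction front contributes a term proportional to $\sum_i\modulo{q_i}W_i$ times the discrepancy between the front speed and $\Lambda_i$. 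Non-physical fronts contribute $\O\,\epsilon$ (respectively $\O\,\epsilon'$) thanks to the bound $\sum_{\alpha\in\mathcal{NP}}\norma{u^\epsilon(t,x_\alpha+)-u^\epsilon(t,x_\alpha-)}\leq\epsilon$, and rarefaction fronts give an analogous $\O\,\epsilon$-term via~\eqref{eq:59}.

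The genuinely new ingredient is the treatment of crossings involving zero waves. When a shadow $i$-wave (implicit in the decomposition $v=S(\zeta,\mathbf q(\zeta,u,v))(u)$) crosses a zero wave located at $\bar x\in\mathcal{I}(\zeta)$, the side-dependent addendum in $B_i$ (the sum of $\modulo{\sigma_\alpha}$ with $x_\alpha<x$ if $i\leq i_o$, or $x_\alpha>x$ if $i>i_o$) decreases by $\norma{\Delta\zeta(\bar x)}$, producing a strictly negative contribution which, up to the factor $\kappa_1$, absorbs exactly the interaction error furnished by Lemma~\ref{lem:inter} and Lemma~\ref{lem:superRP}, which is of the form $\O\,\norma{z^+-z^-}\sum_{i>i_o}\modulo{\alpha_i}$ (and its symmetric counterpart for waves on the right of the zero wave in the $i\leq i_o$ families); here Lemma~\ref{lem:nonfis} is used to handle crossings in which the front is a front of $v$, not of $u$. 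At interactions between two physical fronts, the standard Glimm interaction potential $Q(u,t)+Q(v,t)$ decreases by the usual approaching-waves product; a second choice of $\kappa_2$ large enough, made \emph{after} $\kappa_1$ has been fixed, makes this decrease dominate any residual positive contribution to $\sum_i\modulo{q_i}W_i$. Integrating in time yields the claimed bound $\Phi(u,v)(t)-\Phi(u,v)(s)\leq\O\,\max\{\epsilon,\epsilon'\}(t-s)$.

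The main obstacle is the combinatorial bookkeeping when a $q_i$-shadow front of one solution and a zero wave of $\zeta$ interact while the weights $W_j$ with $j\neq i$ are simultaneously affected: the off-diagonal contribution produces cross-terms of size $\norma{z^+-z^-}\,\norma{u-v}$ that must be absorbed by the decrease in $\kappa_1 B_i$. This works because all the weights $B_i$ are uniformly bounded by $\mathbf{V}^\zeta(u)+\mathbf{V}^\zeta(v)\leq\O\,\delta$, so by further reducing $\delta$ after the choices of $\kappa_1,\kappa_2$ one makes all the residual cross-terms subordinate and the whole chain of sign estimates closes, reproducing in the present non homogeneous setting the proof of \cite[Lemma~9]{AmadoriGosseGuerra2002}.
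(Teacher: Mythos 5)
Your overall architecture matches the paper's: $\Phi$ is piecewise Lipschitz in time, decreasing at interactions thanks to the $\kappa_2\left(Q(u,t)+Q(v,t)\right)$ term, and its derivative between interaction times is a sum of flux contributions over all fronts, with the zero--wave contribution made negative by the $\kappa_1 B_i$ weights. However, your identification of the positive error that $\kappa_1$ must dominate is off. The objects that jump across a zero wave in the between--interactions flux computation are the shadow--shock sizes $q_i^{\nu\pm}$ of~\eqref{eq:69}, the speeds $\lambda_i^{\nu\pm}$ and the weights $W_i^{\nu\pm}$, not genuine wave fronts interacting with the zero wave. Lemmas~\ref{lem:inter}, \ref{lem:superRP} and~\ref{lem:nonfis} control the outcome of actual wave--front interactions with zero waves and enter the proof of Proposition~\ref{thm:ExistenceEpsilonApprox} (and, implicitly, the decrease of $Q$ at interaction times), but they say nothing about the jump $q_i^{\nu+}-q_i^{\nu-}$. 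The paper bounds that jump via Lemma~\ref{lem:EstimateXi2} applied to $\phi(z^+,z^-,w)=q_i^{\nu+}-q_i^{\nu-}$, exploiting that both $u$ and $v$ satisfy the same coupling relation $w^+=T(z^+,z^-)(w^-)$ at the zero wave, so that $\phi(z,z,w)=0$; the second summand of the three--term decomposition~\eqref{eq:18} is then controlled by the Lipschitz dependence of $\lambda_i$ on $z$.

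Your sketch also glosses over a genuine subtlety in the third summand $(W_i^{\nu+}-W_i^{\nu-})\,\modulo{q_i^{\nu-}}\,\lambda_i^{\nu-}$: the $A_i$ part of $W_i$ may itself jump across the zero wave when the shadow shock $q_i$ changes sign, so the sign argument via $\Delta B_i$ works only on the index set $\hat I$ where $q_i^{\nu+}q_i^{\nu-}>0$, while the indices in $\check I$ must be handled separately through~\eqref{eq:73}. Finally, your invocation of Lemma~\ref{lem:nonfis} "to handle crossings in which the front is a front of $v$" is a misattribution: the shadow decomposition $q_i$ is symmetric in $u$ and $v$, and that lemma plays no role in the derivative estimate between interactions.
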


\begin{proof}
  At any interaction time $t$, the same computations as
  in~\cite{AmadoriGosseGuerra2002, Bressan2000, BressanLiuYang1999}
  ensure that $\Phi$ strictly decreases, thanks to the term
  $\kappa_2 \left( Q(u,t)+Q(v,t)\right)$ in~\eqref{eq:72}.

  At a time $t$ between any two interaction times, use the set
  $\mathcal{J}$ to index the discontinuities in $u (t,\cdot)$,
  $v (t, \cdot)$ and in $\zeta$ at time $t$. The same procedure used
  in~\cite{AmadoriGosseGuerra2002, Bressan2000, BressanLiuYang1999},
  to which we refer also for the standard notation employed below,
  allows to compute the derivative of $\Phi$ with respect to time as
  \begin{displaymath}
    \dfrac{\d{~}}{\d{t}} \Phi (u,v) (t)
    =
    \sum_{\nu \in \mathcal{J}}
    \sum_{i=1}^n
    \left(
      \modulo{q_i^{\nu+}} W_i^{\nu+} (\lambda_i^{\nu+} - \dot x_\nu)
      -
      \modulo{q_i^{\nu-}} W_i^{\nu-} (\lambda_i^{\nu-} - \dot x_\nu)
    \right) \,.
  \end{displaymath}
  The standard procedure in~\cite{AmadoriGosseGuerra2002, Bressan2000,
    BressanLiuYang1999} ensures that the above sum restricted to
  physical or non--physical waves is bounded as follows:
  \begin{displaymath}
    \sum_{\nu \in \mathcal{J} \setminus \mathcal{Z}\mathcal{W}} \;
    \sum_{i=1}^n
    \left(
      \modulo{q_i^{\nu+}} W_i^{\nu+} (\lambda_i^{\nu+} - \dot x_\nu)
      -
      \modulo{q_i^{\nu-}} W_i^{\nu-} (\lambda_i^{\nu-} - \dot x_\nu)
    \right)
    \leq
    \O \, \epsilon \,,
  \end{displaymath}
  where, as in Definition~\ref{def:WFTApproxSol},
  $\mathcal{Z}\mathcal{W}$ groups the indexes referring to zero waves.

  Now consider zero waves: $\nu \in \mathcal{Z}\mathcal{W}$. Then,
  $\dot x_\nu = 0$ and
  \begin{eqnarray}
    \nonumber
    &
    & \modulo{q_i^{\nu+}} W_i^{\nu+} (\lambda_i^{\nu+} - \dot x_\nu)
      -
      \modulo{q_i^{\nu-}} W_i^{\nu-} (\lambda_i^{\nu-} - \dot x_\nu)
    \\
    \label{eq:18}
    & =
    & \!\!
      W_i^{\nu+} \! \left(
      \modulo{q_i^{\nu+}}
      {-}
      \modulo{q_i^{\nu-}} \!
      \right)
      \lambda_i^{\nu+}
      {+}
      W_i^{\nu+}
      \modulo{q_i^{\nu-}}
      \left(
      \lambda_i^{\nu+} {-} \lambda_i^{\nu-}
      \right)
      {+}
      \left(W_i^{\nu+} {-} W_i^{\nu-}\right) \!
      \modulo{q_i^{\nu-}} \lambda_i^{\nu-} .
  \end{eqnarray}
  Now we bound the latter three summands separately. First, we use
  Lemma~\ref{lem:EstimateXi2} with
  $w \equiv (q_1^{\nu-}, \ldots, q_n^{\nu-})$ and
  $\phi (z^+,z^-,w) = q_i^{\nu+} - q_i^{\nu-}$, obtaining
  \begin{displaymath}
    \modulo{
      \modulo{q_i^{\nu+}}
      -
      \modulo{q_i^{\nu-}}
    }
    \leq
    \modulo{q_i^{\nu+} - q_i^{\nu-}}
    =
    \modulo{\phi (z^+, z^-, w) - \phi (z^+,z^-,0)}
    \leq
    \O \, \norma{\Delta z} \, \sum_{i=1}^n \modulo{q_i^{\nu-}} \,.
  \end{displaymath}
  Second, by the Lipschitz continuity of $\lambda_i$,
  \begin{displaymath}
    \modulo{q_i^{\nu-}} \, \modulo{\lambda_i^{\nu+} - \lambda_i^{\nu-}}
    \leq
    \O \norma{\Delta z} \, \sum_{i=1}^n \modulo{q_i^{\nu-}} \,.
  \end{displaymath}
  To bound the third term, introduce the sets
  $\hat I = \left\{i \in \{1, \ldots,n\} \colon q_i^{\nu+} \,
    q_i^{\nu-} > 0\right\}$ and
  $\check I = \left\{i \in \{1, \ldots,n\} \colon q_i^{\nu+} \,
    q_i^{\nu-} \leq 0\right\}$. For $i \in \hat I$ we have
  $A_i^+ = A_i^-$. If $i \leq i_o$ then $\lambda_i < 0$ and
  by~\eqref{eq:17} $\Delta B_i = \norma{\Delta z}$ while if
  $i \geq i_o+1$ then $\lambda_i>0$ and
  $\Delta B_i = - \norma{\Delta z}$. In both cases, the third summand
  in~\eqref{eq:18} satisfies
  \begin{eqnarray*}
    (W_i^{\nu+} - W_i^{\nu-}) \modulo{q_i^{\nu-}} \lambda_i^{\nu-}
    <
    - c \, \kappa_1 \, \norma{\Delta z} \, \modulo{q_i^{\nu-}}\,.
  \end{eqnarray*}
  On the other hand, if $i \in \check I$, $A_i^+$ and $A_i^-$ are not
  directly related, but
  \begin{equation}
    \label{eq:73}
    \modulo{q_i^{\nu+}} + \modulo{q_i^{\nu-}}
    =
    \modulo{q_i^{\nu+} - q_i^{\nu-}}
    \leq
    \O \, \norma{\Delta z} \, \sum_{i=1}^n \modulo{q_i^{\nu-}}
  \end{equation}
  so that
  \begin{equation}
    \label{eq:42}
    (W_i^{\nu+} - W_i^{\nu-}) \modulo{q_i^{\nu-}} \lambda_i^{\nu-}
    \leq
    \O \, \norma{\Delta z} \, \sum_{i=1}^n \modulo{q_i^{\nu-}} \,.
  \end{equation}
  Concluding the estimates on the three terms in~\eqref{eq:18}.
  Moreover, by~\eqref{eq:73},
  \begin{displaymath}
    \sum_{i=1}^n \modulo{q_i^{\nu-}}
    =
    \sum_{i \in \hat I} \modulo{q_i^{\nu-}}
    +
    \sum_{i \in \check I} \modulo{q_i^{\nu-}}
    \leq
    \sum_{i \in \hat I} \modulo{q_i^{\nu-}}
    +
    \O \, \norma{\Delta z} \, \sum_{i=1}^n \modulo{q_i^{\nu-}} \,.
  \end{displaymath}
  Hence, for $\Delta z$ sufficiently small,
  \begin{displaymath}
    \sum_{i=1}^n \modulo{q_i^{\nu-}}
    \leq
    2 \sum_{i \in \hat I} \modulo{q_i^{\nu-}} \,.
  \end{displaymath}
  Adding the different estimates obtained, we bound the term
  in~\eqref{eq:18} by
  \begin{eqnarray}
    \label{eq:45}
    &
    & \sum_{i=1}^n
      \left(
      \modulo{q_i^{\nu+}} W_i^{\nu+} (\lambda_i^{\nu+} - \dot x_\nu)
      -
      \modulo{q_i^{\nu-}} W_i^{\nu-} (\lambda_i^{\nu-} - \dot x_\nu)
      \right)
    \\
    \nonumber
    & \leq
    & - c \, \kappa_1 \, \norma{\Delta z}
      \sum_{i \in \hat I} \modulo{q_i^{\nu-}}
      +
      \O \, \norma{\Delta z} \, \sum_{i=1}^n \modulo{q_i^{\nu-}}
    \\
    \nonumber
    & \leq
    & (\O - c \, \kappa_1) \norma{\Delta z}
      \sum_{i \in \hat I} \modulo{q_i^{\nu-}}
    \\
    \nonumber
    & <
    & 0 \,,
  \end{eqnarray}
  assuming $\kappa_1$ sufficiently large.

  The proof is now completed by means of standard arguments, refer
  to~\cite{AmadoriGosseGuerra2002, Bressan2000, BressanLiuYang1999}.
\end{proof}

\subsubsection{Proof of Theorem~\ref{thm:sgrp} in the Case
  $\zeta \in \PC (\reali; \reali^p)$}
\label{subs:proof--theor-refthm:s}

Let $f$ satisfy~\ref{it:f1}--\ref{it:f4}, $\Xi$
satisfy~\ref{eq:Xi1}--\ref{eq:Xi4}. Fix $\bar z \in \mathcal{Z}$,
$\bar u \in \Omega$ and $\delta$ as defined in
Lemma~\ref{lem:pocosu}. Choose $\zeta \in \PC (\reali; \mathcal{Z})$
with $\tv (\zeta) < \delta$,
$\norma{\zeta - \bar z}_{\L\infty (\reali; \reali^p)} < \delta$ and
let $\mathcal{D}^\zeta  =  \mathcal{D}^\zeta_\delta$ be as
in~\eqref{def:2.6}. Note that
$\mathcal{D}^\zeta \subseteq \bar u + \L1 (\reali; \Omega)$ contains
all functions $u$ in $\bar u + \L1 (\reali; \Omega)$ with
$\tv (u) < \delta$.

\begin{lemma}
  \label{lem:123}
  There exist a positive $L$ and a unique semigroup
  $S^\zeta \colon \reali_+ \times \mathcal{D}^\zeta \to
  \mathcal{D}^\zeta$, obtained as limit of wave front tracking
  $\epsilon$--approximate solutions, such that points~1.,~2.~and~3.~in
  Theorem~\ref{thm:sgrp} hold.
\end{lemma}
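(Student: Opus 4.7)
The plan is to construct $S^\zeta$ as the $\epsilon \to 0$ limit of the wave front tracking $\epsilon$--approximate solutions supplied by Proposition~\ref{thm:ExistenceEpsilonApprox}. First I would fix a piecewise constant $u_o \in \mathcal{D}^\zeta$ and, for each $\epsilon > 0$, obtain an $\epsilon$--approximate solution $u^\epsilon$ with the uniform $\BV$--in--space and $\BV$--in--time bounds of Proposition~\ref{thm:ExistenceEpsilonApprox}. Then I would apply Lemma~\ref{lem:pocosu} with $v = u^{\epsilon'}$: since the functional $\Phi$ is uniformly equivalent to the $\L1$ distance (via the bound $1 \leq W_i \leq 2$) and almost--decreasing up to an error $\O \max\{\epsilon,\epsilon'\}(t-s)$, the family $\{u^\epsilon\}$ is Cauchy in $\C0([0,T]; \Lloc1(\reali;\reali^n))$, and I would call its limit $S^\zeta_t u_o$. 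A second application of Lemma~\ref{lem:pocosu}, now comparing approximations built from different initial data in $\mathcal{D}^\zeta$ with the same $\zeta$, delivers the $\L1$--Lipschitz dependence on the initial datum; the time--Lipschitz bound follows from the uniform speed bound $\hat\lambda$ and the uniform $\BV$ estimate. By this Lipschitz bound, $S^\zeta$ extends from the $\L1$--dense set of piecewise constant data in $\mathcal{D}^\zeta$ to all of $\mathcal{D}^\zeta$.

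For property~3.\ of Theorem~\ref{thm:sgrp}, observe that when both $u_o$ and $\zeta$ are piecewise constant, the wave front tracking algorithm produces, for $t$ smaller than the first interaction time, exactly the gluing of the Admissible Solutions of the Generalized Riemann Problems at each jump point of $u_o$ and each $\bar x \in \mathcal{I}(\zeta)$, thanks to the construction of zero--waves via the coupling~\eqref{eq:1}; this identification passes to the limit $\epsilon \to 0$ by local uniform convergence. Property~2.\ is the limiting version of Lemma~\ref{lem:pocosu}. Property~1.\ is the most delicate: since $\zeta \in \PC$, the non--atomic part $\mu$ of $D\zeta$ in~\eqref{eq:20} vanishes, and one only has to recover the Dirac source at each $\bar x \in \mathcal{I}(\zeta)$ in the integral identity~\eqref{eq:48}. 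I would decompose $\partial_t u^\epsilon + \partial_x f(\zeta, u^\epsilon)$ as a sum over the four kinds of fronts in Definition~\ref{def:WFTApproxSol}: admissible shocks and rarefactions contribute errors of order $\O(\epsilon)$ by~\eqref{eq:59}; non--physical fronts contribute a term vanishing since their total strength is bounded by $\epsilon$; and each zero--wave at $\bar x$ produces exactly the jump $f(\zeta(\bar x+), u^+) - f(\zeta(\bar x), u^-) = \Xi(\zeta(\bar x+), \zeta(\bar x), u^-)$ by~\eqref{eq:1}. Passing to the limit yields~\eqref{eq:48}.

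The uniqueness of $S^\zeta$ within the class of semigroups satisfying~1.,~2.,~3.\ follows from property~3., which pins down $\tilde S^\zeta_t u_o$ on every piecewise constant $u_o$ for $t$ smaller than the first interaction time; the semigroup property then propagates the identity $\tilde S^\zeta_t u_o = S^\zeta_t u_o$ to all $t \geq 0$, and the Lipschitz continuity of~2.\ together with the density of piecewise constant data in $\mathcal{D}^\zeta$ forces $\tilde S^\zeta = S^\zeta$.

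The main obstacle is the pointwise trace convergence $u^\epsilon(t, \bar x\pm) \to u(t, \bar x\pm)$ at the discontinuity points of $\zeta$, which is needed in order to conclude property~1.\ from the front--by--front decomposition. Here the uniform $\BV$--in--time bound $\tv(u^\epsilon(\cdot, x)) \leq K$ of Proposition~\ref{thm:ExistenceEpsilonApprox}, together with the left--continuity convention on $\BV$ representatives and a Helly--type diagonal extraction at each $\bar x \in \mathcal{I}(\zeta)$, should furnish the required trace convergence.
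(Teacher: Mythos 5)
Your existence/stability argument follows the paper's line closely: Lemma~\ref{lem:pocosu} applied to pairs of approximate solutions, the equivalence of $\Phi$ with $\L1$ via $1\le W_i\le 2$, the uniform speed and $\BV$ bounds for time--Lipschitz continuity. The concern you raise at the end about trace convergence at zero-wave locations is legitimate and is resolved essentially as you sketch (it is handled in the cited work).

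However, your uniqueness argument has a genuine gap. You claim that property~3.\ pins down $\tilde S^\zeta_t u_o$ on piecewise constant data for small $t$ and that ``the semigroup property then propagates the identity to all $t\ge 0$.'' This propagation fails: after any positive time, $S^\zeta_\tau u_o$ is generally no longer piecewise constant (rarefaction profiles are continuous), so property~3.\ ceases to apply and there is no way to iterate. The paper avoids this by comparing a competing semigroup $\Sigma^\zeta$ against the $\epsilon$-approximate \emph{wave front tracking} solution $u^\epsilon$, which by construction remains piecewise constant for all time. One then invokes the error formula \cite[Theorem~2.9]{Bressan2000},
\begin{displaymath}
\norma{\Sigma^\zeta_t u^\epsilon(0) - u^\epsilon(t)}_{\L1}
\le
\O \int_0^t \liminf_{h\to 0+}\frac{1}{h}\,
\norma{\Sigma^\zeta_h u^\epsilon(\tau) - u^\epsilon(\tau+h)}_{\L1}\,\d\tau ,
\end{displaymath}
and estimates the integrand locally at each front of $u^\epsilon$: physical and non-physical fronts contribute $\O(\epsilon)\,h$ by \cite[Lemma~9.1]{Bressan2000}, while at zero waves the contribution vanishes because both $\Sigma^\zeta$ (through property~3.) and $u^\epsilon$ (through Definition~\ref{def:WFTApproxSol}) solve the generalized Riemann problem there exactly. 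Sending $\epsilon\to 0$ gives $\Sigma^\zeta_t u_o = S^\zeta_t u_o$. The essential ingredient your proposal is missing is this comparison with the piecewise-constant WFT approximation rather than with the exact semigroup orbit.
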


\begin{proof}
  Since $\mathcal{D}^\zeta_\delta$ is separable, the existence of a
  Lipschitz continuous semigroup $S^\zeta$ enjoying properties~1.,
  2.~and~3.~can be obtained through the limit of (subsequences of)
  wave front tracking $\epsilon$--approximations in
  Definition~\ref{def:WFTApproxSol} following standard arguments, see
  for instance, \cite{AmadoriGosseGuerra2002} or~\cite{Bressan2000,
    BressanLiuYang1999, ColomboGuerra2008, ColomboGuerraHolle}.

  To prove uniqueness, let $\Sigma^\zeta$ be any Lipschitz continuous
  semigroups satisfying~1., 2.~and~3.. Fix an initial datum
  $u_o \in \mathcal{D}^\zeta_\delta$.  Call $u^\epsilon$ a wave front
  tracking $\epsilon$--approximate solution approaching the orbit
  $t \mapsto S^\zeta u_o$ as $\epsilon \to 0$. Then, by the Lipschitz
  continuity of $\Sigma^\zeta$ and Lemma~\ref{lem:pocosu},
  \begin{equation}
    \label{eq:65}
    \norma{\Sigma^\zeta_t u_o - S^\zeta_t u_o}_{\L1 (\reali; \reali^n)}
    \leq
    \norma{\Sigma^\zeta_t u^\epsilon (0) - u^\epsilon (t)}_{\L1 (\reali; \reali^n)}
    +
    \O \, \epsilon \, (1+t) \,.
  \end{equation}
  We now use~\cite[Theorem~2.9]{Bressan2000} to bound the first term
  in the right hand side above:
  \begin{equation}
    \label{eq:66}
    \norma{\Sigma^\zeta_t u^\epsilon (0) - u^\epsilon (t)}_{\L1 (\reali; \reali^n)}
    \leq
    \O
    \int_0^t \liminf_{h\to 0}
    \dfrac{\norma{\Sigma^\zeta_h u^\epsilon (t) - u^\epsilon (t+h)}_{\L1 (\reali; \reali^n)}}{h} \d\tau \,.
  \end{equation}
  Using the notation as in Definition~\ref{def:WFTApproxSol} and the
  classical estimates on physical and non--physical waves
  in~\cite[Lemma~9.1]{Bressan2000}, for $h$ so small that solutions to
  adjacent Riemann Problems do not overlap, we have:
  \begin{eqnarray*}
    &
    & \norma{\Sigma^\zeta_h u^\epsilon (t) - u^\epsilon (t+h)}_{\L1 (\reali; \reali^n)}
    \\
    & \leq
    & \sum_{\alpha \in \mathcal{J}}
      \int_{x_\alpha - \hat\lambda h}^{x_\alpha + \hat\lambda h}
      \norma{\left(\Sigma^\zeta_h u^\epsilon (t)\right) (x) - u^\epsilon (t+h,x)}
      \d{x}
    \\
    & \leq
    & \O \, \epsilon \, h
      +
      \sum_{\alpha \in \mathcal{ZW}}
      \int_{x_\alpha - \hat\lambda h}^{x_\alpha + \hat\lambda h}
      \norma{\left(\Sigma^\zeta_h u^\epsilon (t)\right) (x) - u^\epsilon (t+h,x)}
      \d{x}
    \\
    & =
    & \O \, \epsilon \, h
  \end{eqnarray*}
  since for all zero waves
  $\left(\Sigma^\zeta_h u^\epsilon (t)\right) (x) = u^\epsilon
  (t+h,x)$ for a.e.~$t$ and wave front tracking
  $\epsilon$--approximation solves Riemann Problems at zero waves
  exactly.

  Insert the latter bound in~\eqref{eq:66}, so that in the limit
  $\epsilon\to 0$, \eqref{eq:65} and the arbitrariness of $u_o$ yield
  the equality of $S^\zeta$ and $\Sigma^\zeta$.
\end{proof}

\begin{lemma}
  \label{lem:1}
  Fix $\xi \in \reali$ and define
  \begin{equation}
    \label{eq:47}
    \tilde\zeta (x) = \left\{
      \begin{array}{ll}
        \zeta (\xi)
        & x\leq \xi,
        \\
        \zeta (\xi+)
        & x> \xi .
      \end{array}
    \right.
  \end{equation}
  Choose $u \in \mathcal{D}^\zeta \cap \mathcal{D}^{\tilde\zeta}$.
  Then, for all $\theta > 0$,
  \begin{displaymath}
    \dfrac1\theta
    \int_{\xi - \hat\lambda \theta}^{\xi + \hat\lambda \theta}
    \norma{S^\zeta_\theta u (x)   - S^{\tilde\zeta}_\theta u (x)}
    \d{x}
    \leq
    \O \;
    \tv \! \left(
      \zeta;
      \mathopen]\xi-2\hat\lambda\theta, \xi\mathclose[
      \cup \mathopen]\xi, \xi+2\hat\lambda\theta\mathclose[
    \right) \,.
  \end{displaymath}
\end{lemma}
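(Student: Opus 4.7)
The plan is to combine finite propagation speed with a telescoping argument over the jumps of $\zeta$ that distinguish it from $\tilde\zeta$, reducing the comparison to a one-jump estimate handled by wave-front tracking.

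First, the construction of $S^\zeta$ and $S^{\tilde\zeta}$ in Lemma~\ref{lem:123} as limits of $\epsilon$-approximate solutions (Definition~\ref{def:WFTApproxSol}) yields a uniform bound $\hat\lambda$ on propagation speed. Hence the values of $S^\zeta_\theta u$ and $S^{\tilde\zeta}_\theta u$ on $[\xi-\hat\lambda\theta,\xi+\hat\lambda\theta]$ depend only on the data on $[\xi-2\hat\lambda\theta,\xi+2\hat\lambda\theta]$, so I may truncate $\zeta$ outside this cone without altering the integral. Denote by $x_1<\dots<x_N$ the jumps of $\zeta$ distinct from $\xi$ inside $\mathopen]\xi-2\hat\lambda\theta,\xi+2\hat\lambda\theta\mathclose[$.

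Second, I would introduce a telescoping family by setting $\zeta^{(0)}:=\tilde\zeta$ and defining $\zeta^{(k)}$ inductively by reinserting the jump of $\zeta$ at $x_k$, so that $\zeta^{(N)}$ coincides with $\zeta$ inside the cone. The triangle inequality gives
\begin{displaymath}
  \frac{1}{\theta} \int_{\xi-\hat\lambda\theta}^{\xi+\hat\lambda\theta}
  \norma{S^\zeta_\theta u - S^{\tilde\zeta}_\theta u} \d{x}
  \leq \sum_{k=1}^N \frac{1}{\theta}
  \int_{\xi-\hat\lambda\theta}^{\xi+\hat\lambda\theta}
  \norma{S^{\zeta^{(k)}}_\theta u - S^{\zeta^{(k-1)}}_\theta u} \d{x}.
\end{displaymath}
Since $\sum_{k=1}^N \norma{\Delta\zeta(x_k)} = \tv(\zeta;\mathopen]\xi-2\hat\lambda\theta,\xi\mathclose[\cup\mathopen]\xi,\xi+2\hat\lambda\theta\mathclose[)$, it suffices to prove the one-jump bound $\frac{1}{\theta}\int\norma{S^{\zeta^{(k)}}_\theta u - S^{\zeta^{(k-1)}}_\theta u}\d{x}\leq\O\,\norma{\Delta\zeta(x_k)}$ for each $k$, and sum over $k$.

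For the one-jump estimate I would use the $\epsilon$-approximate wave-front-tracking solutions of Proposition~\ref{thm:ExistenceEpsilonApprox}, constructing $u^\epsilon$ and $v^\epsilon$ associated to $\zeta^{(k)}$ and $\zeta^{(k-1)}$ respectively and both started from the common datum $u$. The sole structural difference between the two approximations is the presence in $u^\epsilon$ of an extra zero wave at $x_k$ of strength $\norma{\Delta\zeta(x_k)}$. By Lemma~\ref{lem:ExistenceT} together with Lemma~\ref{lem:EstimateXi2}, the coupling $T(\zeta^{(k)}(x_k+),\zeta^{(k)}(x_k))$ differs from the identity by at most $\O\,\norma{\Delta\zeta(x_k)}$ times the crossing wave's strength, and finite propagation localizes the entire discrepancy inside the cone $[x_k-\hat\lambda\theta,x_k+\hat\lambda\theta]$. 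Combining this with the interaction estimates in Lemmas~\ref{lem:inter},~\ref{lem:superRP}, and~\ref{lem:nonfis}, together with the uniform total-variation bound of Proposition~\ref{thm:ExistenceEpsilonApprox}, each interaction of a physical wave with the extra zero wave contributes $\O\,\norma{\Delta\zeta(x_k)}$ times the physical wave's size, summing to $\O\,\theta\,\norma{\Delta\zeta(x_k)}$ over the time interval $[0,\theta]$; passing to the limit $\epsilon\to 0$ yields the bound.

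The main obstacle is this one-jump estimate, since the stability functional $\Phi$ of Lemma~\ref{lem:pocosu} compares solutions sharing a common $\zeta$ and cannot be applied verbatim when the two $\zeta$'s differ. I expect either a modified functional that incorporates the mismatch in the zero-wave structure, or a direct accounting along the two $\epsilon$-approximations using Lemma~\ref{lem:zetaPbR} at the elementary Riemann subproblems generated by the algorithm, to be required in order to push the argument through rigorously.
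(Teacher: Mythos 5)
Your proposal has a genuine gap, which you partially acknowledge, and it differs from the paper's argument in a way that matters.

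The telescoping step is not as innocuous as your sketch suggests. When you pass from $\zeta^{(k-1)}$ to $\zeta^{(k)}$ by ``reinserting the jump of $\zeta$ at $x_k$'', you are not adding one zero wave and otherwise leaving the configuration alone: the values of $\zeta^{(k)}$ are shifted by $\Delta\zeta(x_k)$ on an entire half-line past $x_k$, so all zero waves and all Riemann problems downstream of $x_k$ are perturbed. There is therefore no ``sole extra zero wave'' whose interactions you can track and sum; the entire wave pattern on one side changes. The one-jump estimate $\frac1\theta\int\norma{S^{\zeta^{(k)}}_\theta u - S^{\zeta^{(k-1)}}_\theta u}\leq\O\,\norma{\Delta\zeta(x_k)}$ is thus not a local interaction count but a full comparison of two semigroups with different $\zeta$'s, i.e., a problem of exactly the same type as the one you started with. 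You correctly observe that the functional $\Phi$ of Lemma~\ref{lem:pocosu} cannot bridge two different $\zeta$'s, and you correctly name Lemma~\ref{lem:zetaPbR} as the missing ingredient; but you never actually close the loop, and the interaction estimates (Lemmas~\ref{lem:inter}, \ref{lem:superRP}, \ref{lem:nonfis}) do not by themselves produce the claim, since they bound interaction outputs within a single approximate solution, not the $\L1$-distance between two solutions built on distinct $\zeta$'s.

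The paper avoids telescoping entirely. It fixes a wave-front-tracking approximation $u^\epsilon$ of $S^\zeta u$, and applies the error formula of \cite[Theorem~2.9]{Bressan2000} against the \emph{other} semigroup $S^{\tilde\zeta}$, which converts the left-hand side (up to $\O\,\epsilon$) into a time integral of instantaneous rates $\liminf_{h\to0+}\frac1h\int\norma{u^\epsilon(t+h)-S^{\tilde\zeta}_h u^\epsilon(t)}$. This rate splits into the $\epsilon$-approximation error of $u^\epsilon$ for the $\zeta$-problem (which is $\O\,\epsilon$) plus $\frac1h\int\norma{S^\zeta_h u^\epsilon(t)-S^{\tilde\zeta}_h u^\epsilon(t)}$. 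Since $u^\epsilon(t,\cdot)$ is piecewise constant, this last quantity decomposes over the finitely many local Riemann problems, each compared via Lemma~\ref{lem:zetaPbR} with identical $u$-data but $\zeta$ versus $\tilde\zeta$ as the coefficient field. Because $\tilde\zeta$ coincides with $\zeta(\xi\pm)$, the discrepancy $\norma{\zeta(x_\alpha)-\tilde\zeta(x_\alpha)}$ at each $x_\alpha$ in the cone is controlled by $\tv(\zeta;\mathopen]\xi-2\hat\lambda\theta,\xi\mathclose[\cup\mathopen]\xi,\xi+2\hat\lambda\theta\mathclose[)$, and the term at $x_\alpha=\xi$ vanishes. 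This decomposition is local in \emph{time} (via Bressan's error formula) rather than in the jump structure of $\zeta$, and it is precisely what lets Lemma~\ref{lem:zetaPbR} be applied where $\Phi$ cannot. Your plan would need to recreate this machinery inside each telescoping step, so the telescoping gains nothing and the sketch as written does not constitute a proof.
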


\begin{proof}
  Let $\hat u^\epsilon$ be an $\epsilon$-wave front tracking
  approximation of $S^{\zeta} u$ so that
  \begin{displaymath}
    \int_{\xi - \hat\lambda \theta}^{\xi + \hat\lambda \theta}
    \norma{S^\zeta_\theta u (x)   - S^{\tilde\zeta}_\theta u (x)}
    \d{x}
    \leq
    \O \, \epsilon
    +
    \int_{\xi - \hat\lambda \theta}^{\xi + \hat\lambda \theta}
    \norma{u^\epsilon (\theta,x)   -
      \left(S^{\tilde\zeta}_\theta u^\epsilon (0)\right)(x)}
    \d{x} \,.
  \end{displaymath}
  By~\cite[Theorem~2.9]{Bressan2000},
  \begin{eqnarray}
    \nonumber
    &
    & \int_{\xi - \hat\lambda \theta}^{\xi + \hat\lambda \theta}
      \norma{u^\epsilon (\theta,x)   -
      \left(S^{\tilde\zeta}_\theta u^\epsilon (0)\right)(x)}
      \d{x}
    \\
    \nonumber
    & \leq
    & \O \int_0^\theta
      \liminf_{h\to 0+} \dfrac1h
      \int_{\xi-2\hat\lambda\theta+\hat\lambda(t+h)}%
      ^{\xi+2\hat\lambda\theta-\hat\lambda (t+h)}
      \norma{u^\epsilon (t+h,x) - %
      \left(S^{\tilde\zeta}_h u^\epsilon (t)\right) (x)} \d{x} \d{t}
    \\
    \label{eq:74}
    & \leq
    & \O \int_0^\theta
      \liminf_{h\to 0+} \dfrac1h
      \int_{\xi-2\hat\lambda\theta+\hat\lambda(t+h)}%
      ^{\xi+2\hat\lambda\theta-\hat\lambda (t+h)}
      \norma{u^\epsilon (t+h,x) - %
      \left(S^{\zeta}_h u^\epsilon (t)\right) (x)} \d{x} \d{t}
    \\
    \label{eq:75}
    &
    & +\O \int_0^\theta
      \liminf_{h\to 0+} \dfrac1h
      \int_{\xi-2\hat\lambda\theta+\hat\lambda(t+h)}%
      ^{\xi+2\hat\lambda\theta-\hat\lambda (t+h)}
      \norma{\left(S^{\zeta}_h u^\epsilon (t)\right) (x) - %
      \left(S^{\tilde\zeta}_h u^\epsilon (t)\right) (x)} \d{x} \d{t} \,.
  \end{eqnarray}
  The integral in~\eqref{eq:74} is bounded by $\O\,\epsilon$ since
  $u^\epsilon$ is a piecewise constant $\epsilon$--approximation of
  the trajectory $t \mapsto S^{\zeta}_t \left(u^\epsilon
    (0)\right)$. The map $x \mapsto u^\epsilon (t,x)$ is piecewise
  constant, hence the integral in~\eqref{eq:75} can be computed
  estimating the differences in the local solutions to Riemann
  Problems arising from the discontinuities in $u^\epsilon (t)$ using
  Lemma~\ref{lem:zetaPbR} in the case $\check u_o = \hat u_o$. Thus,
  the term in~\eqref{eq:75} is estimated as
  \begin{eqnarray*}
    &
    & \liminf_{h\to 0+} \dfrac1h
      \int_{\xi-2\hat\lambda\theta+\hat\lambda(t+h)}%
      ^{\xi+2\hat\lambda\theta-\hat\lambda (t+h)}
      \norma{\left(S^{\zeta}_h u^\epsilon (t)\right) (x) - %
      \left(S^{\tilde\zeta}_h u^\epsilon (t)\right) (x)} \d{x}
    \\
    & \leq
    & \sum_{
      \substack{
      x_\alpha \in \mathcal{I} (u^\epsilon (t))
    \\
    x_\alpha \in
    ]{\xi-2\hat\lambda\theta+\hat\lambda t},
    {\xi+2\hat\lambda\theta-\hat\lambda t}[
    \\
    x_\alpha \neq\xi \,,\;
    x_\alpha \not\in \mathcal{I} (\zeta)
    }
    }
    \liminf_{h\to 0+} \dfrac1h
    \int_{x_\alpha-\hat\lambda h}%
    ^{x_\alpha + \hat\lambda h}
    \norma{\left(S^{\zeta}_h u^\epsilon (t)\right) (x) - %
    \left(S^{\tilde\zeta}_h u^\epsilon (t)\right) (x)} \d{x}
    \\
    &
    & + \sum_{
      \substack{
      x_\alpha \in \mathcal{I} (\zeta)
    \\
    x_\alpha \in
    ]{\xi-2\hat\lambda\theta+\hat\lambda t},
    {\xi+2\hat\lambda\theta-\hat\lambda t}[
    \\
    x_\alpha \neq\xi
    }
    }
    \liminf_{h\to 0+} \dfrac1h
    \int_{x_\alpha-\hat\lambda h}%
    ^{x_\alpha + \hat\lambda h}
    \norma{\left(S^{\zeta}_h u^\epsilon (t)\right) (x) - %
    \left(S^{\tilde\zeta}_h u^\epsilon (t)\right) (x)} \d{x}
    \\
    &
    & +
      \liminf_{h\to 0+} \dfrac1h
      \int_{\xi-\hat\lambda h}%
      ^{\xi + \hat\lambda h}
      \underbrace{\norma{\left(S^{\zeta}_h u^\epsilon (t)\right) (x) - %
      \left(S^{\tilde\zeta}_h u^\epsilon (t)\right) (x)}}_{=0} \d{x}
    \\
    & \leq
    & \O \sum_{
      \substack{
      x_\alpha \in \mathcal{I} (u^\epsilon (t))
    \\
    x_\alpha \in
    ]{\xi-2\hat\lambda\theta+\hat\lambda t},
    {\xi+2\hat\lambda\theta-\hat\lambda t}[
    \\
    x_\alpha \neq\xi \,,\;
    x_\alpha \not\in \mathcal{I} (\zeta)
    }
    }
    \norma{\Delta u^\epsilon (t,x_\alpha)} \,
    \norma{\zeta (x_\alpha) - \tilde\zeta (x_\alpha)}
    \\
    &
    & + \O \sum_{
      \substack{
      x_\alpha \in \mathcal{I} (\zeta)
    \\
    x_\alpha \in
    ]{\xi-2\hat\lambda\theta+\hat\lambda t},
    {\xi+2\hat\lambda\theta-\hat\lambda t}[
    \\
    x_\alpha \neq\xi
    }
    }
    \left(
    \norma{\Delta u^\epsilon (t,x_\alpha)} \,
    \norma{\zeta (x_\alpha) - \tilde\zeta (x_\alpha)}
    +
    \norma{\Delta\zeta (x_\alpha)}
    \right)
    \\
    & \leq
    & \O \, \tv \left(\zeta;\mathopen]\xi-2\hat\lambda\theta, \xi\mathclose[
      \cup \mathopen]\xi, \xi+2\hat\lambda\theta\mathclose[\right) \,,
  \end{eqnarray*}
  and, in the limit $\epsilon\to 0$, the proof of Lemma~\ref{lem:1}
  follows.
\end{proof}

\begin{lemma}
  \label{lem:2}
  Fix $u_o \in \mathcal{D}^\zeta$. For a $\xi \in \reali$ define
  \begin{equation}
    \label{eq:46}
    \tilde u (x)
    =
    \left\{
      \begin{array}{l@{\qquad}r@{\;}c@{\;}l}
        u_o (\xi-)
        & x
        & \in
        & \mathopen]\xi-\delta, \xi\mathclose[
        \\
        u_o (\xi+)
        & x
        & \in
        & \mathopen]\xi, \xi+\delta\mathclose[
        \\
        u_o (x)
        & x
        & \in
        & \mathopen]-\infty, \xi-\delta\mathclose[
          \cup \mathopen]\xi+\delta, +\infty\mathclose[
      \end{array}
    \right.
  \end{equation}
  and assume that $\tilde u \in \mathcal{D}^\zeta$. Then, for
  $\theta \in \mathopen]0, \delta/ (2\hat\lambda)\mathclose[$,
  \begin{displaymath}
    \dfrac1\theta
    \int_{\xi - \hat\lambda\theta}^{\xi+\hat\lambda\theta}
    \norma{S^\zeta_\theta u_o (x) - S^\zeta_\theta \tilde u (x)} \d{x}
    \leq
    \O \; \tv\left(
      u_o;
      \mathopen]\xi-2\hat\lambda\theta, \xi\mathclose[
      \cup \mathopen]\xi, \xi+2\hat\lambda\theta\mathclose[
    \right) \,.
  \end{displaymath}
\end{lemma}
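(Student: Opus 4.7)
The plan is to exploit the finite speed of propagation of $S^\zeta$ to localize the problem, and then to apply the $\L1$--Lipschitz continuity of the semigroup provided by Lemma~\ref{lem:123}. Since $\theta < \delta/(2\hat\lambda)$, we have $2\hat\lambda\theta < \delta$, hence $\mathopen]\xi-2\hat\lambda\theta, \xi+2\hat\lambda\theta\mathclose[ \subset \mathopen]\xi-\delta, \xi+\delta\mathclose[$, and on this inner interval $\tilde u$ takes the constant values $u_o(\xi-)$ to the left of $\xi$ and $u_o(\xi+)$ to the right. Introduce
\begin{displaymath}
  \bar u(x)
  =
  \left\{
    \begin{array}{ll}
      u_o(x) & x \in \mathopen]\xi-2\hat\lambda\theta, \xi+2\hat\lambda\theta\mathclose[\,,
      \\
      \tilde u(x) & \mbox{otherwise.}
    \end{array}
  \right.
\end{displaymath}
A direct computation shows $\tv(\bar u) = \O \, \tv(u_o)$, the boundary jumps at $\xi \pm 2\hat\lambda\theta$ being each controlled by the local total variation of $u_o$; up to reducing $\delta$ in Theorem~\ref{thm:sgrp} if needed, this ensures $\bar u \in \mathcal{D}^\zeta$.

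Two observations then conclude the argument. First, since $\bar u$ and $u_o$ coincide on $\mathopen]\xi-2\hat\lambda\theta, \xi+2\hat\lambda\theta\mathclose[$, the finite speed of propagation of $S^\zeta$ yields $S^\zeta_\theta u_o \equiv S^\zeta_\theta \bar u$ on $\mathopen]\xi-\hat\lambda\theta, \xi+\hat\lambda\theta\mathclose[$, so that by Lemma~\ref{lem:123}
\begin{displaymath}
  \int_{\xi-\hat\lambda\theta}^{\xi+\hat\lambda\theta}
  \norma{S^\zeta_\theta u_o(x) - S^\zeta_\theta \tilde u(x)} \d x
  =
  \int_{\xi-\hat\lambda\theta}^{\xi+\hat\lambda\theta}
  \norma{S^\zeta_\theta \bar u(x) - S^\zeta_\theta \tilde u(x)} \d x
  \leq L \, \norma{\bar u - \tilde u}_{\L1(\reali;\reali^n)}.
\end{displaymath}
Second, since $\bar u = \tilde u$ outside $\mathopen]\xi-2\hat\lambda\theta, \xi+2\hat\lambda\theta\mathclose[$ and $\tilde u$ is constant on each side of $\xi$ inside,
\begin{displaymath}
  \norma{\bar u - \tilde u}_{\L1(\reali;\reali^n)}
  =
  \int_{\xi-2\hat\lambda\theta}^{\xi} \norma{u_o(x) - u_o(\xi-)} \d x
  +
  \int_{\xi}^{\xi+2\hat\lambda\theta} \norma{u_o(x) - u_o(\xi+)} \d x
  \leq
  \O \, \theta \, \tv\!\left(u_o;
    \mathopen]\xi-2\hat\lambda\theta, \xi\mathclose[
    \cup \mathopen]\xi, \xi+2\hat\lambda\theta\mathclose[
  \right).
\end{displaymath}
Dividing by $\theta$ gives the claim.

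The main obstacle is the rigorous justification of the finite speed of propagation for $S^\zeta$ used above. At the level of the wave front tracking $\epsilon$--approximations of Proposition~\ref{thm:ExistenceEpsilonApprox} this property is manifest from Definition~\ref{def:WFTApproxSol}: physical fronts travel at speeds bounded by $\hat\lambda$, non--physical fronts at exactly $\hat\lambda$, and zero--waves remain stationary at the points of $\mathcal{I}(\zeta)$. Constructing WFT approximations of $S^\zeta u_o$ and $S^\zeta \bar u$ whose piecewise constant initial data agree on $\mathopen]\xi-2\hat\lambda\theta, \xi+2\hat\lambda\theta\mathclose[$, the two approximations must coincide on the backward characteristic cone through $\{\theta\}\times \mathopen]\xi-\hat\lambda\theta, \xi+\hat\lambda\theta\mathclose[$, and this equality passes to the $\L1$--limit $\epsilon \to 0$ by Lemma~\ref{lem:123}. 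An equivalent route, closer in spirit to the proof of Lemma~\ref{lem:1}, is to take a single WFT $\epsilon$--approximation $u^\epsilon$ of $S^\zeta u_o$ and apply~\cite[Theorem~2.9]{Bressan2000} in the backward cone to compare $u^\epsilon(\theta)$ with $S^\zeta_\theta \tilde u$, reducing the bound to a local $\L1$ estimate on the base $\mathopen]\xi-2\hat\lambda\theta, \xi+2\hat\lambda\theta\mathclose[$ that converges, as $\epsilon \to 0$, to the expression computed above.
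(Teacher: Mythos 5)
Your proposal is correct and follows essentially the same route as the paper: the key fact is the $\L1$--Lipschitz continuity of $S^\zeta$ restricted to domains of dependence, combined with a pointwise bound of $\norma{u_o - \tilde u}$ by the local total variation. The paper invokes this localized Lipschitz estimate directly, namely
$\int_{\xi-\hat\lambda\theta}^{\xi+\hat\lambda\theta}\norma{S^\zeta_\theta u_o - S^\zeta_\theta\tilde u}\d x
\leq L\int_{\xi-2\hat\lambda\theta}^{\xi+2\hat\lambda\theta}\norma{u_o-\tilde u}\d x$,
whereas you unpack it by inserting the auxiliary $\bar u$ (finite propagation speed gives equality, then global Lipschitz finishes); these are the same estimate, but your detour through $\bar u$ forces you to also verify $\bar u\in\mathcal{D}^\zeta$, a small extra check the paper's direct invocation avoids.
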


\begin{proof}
  Use the Lipschitz continuity of $S^\zeta$, see Lemma~\ref{lem:123},
  on the dependency domain,
  \begin{eqnarray*}
    \dfrac1\theta
    \int_{\xi - \hat\lambda\theta}^{\xi+\hat\lambda\theta}
    \norma{S^\zeta_\theta u_o (x) - S^\zeta_\theta \tilde u (x)} \d{x}
    & \leq
    & \dfrac{L}{\theta}
      \int_{\xi - 2\hat\lambda\theta}^{\xi+2\hat\lambda\theta}
      \norma{u_o (x) - \tilde u (x)} \d{x}
    \\
    & \leq
    & \dfrac{L}{\theta}
      \int_{\xi - 2\hat\lambda\theta}^{\xi+2\hat\lambda\theta}
      \tv\left(
      u_o;
      \mathopen]\xi-2\hat\lambda\theta, \xi\mathclose[
      \cup \mathopen]\xi, \xi+2\hat\lambda\theta\mathclose[
      \right)
      \d{x}
  \end{eqnarray*} and the proof follows.
\end{proof}

We are now ready to complete the proof of~\emph{(i)} in
Theorem~\ref{thm:sgrp} for a piecewise constant $\zeta$. Use
$\tilde u$ as defined in~\eqref{eq:46} with $u_o = u (\tau)$ and
$\tilde\zeta$ as in~\eqref{eq:47}, so that
$U^\sharp_{(u,\tau,\xi)} (\theta,x) = (S^{\tilde\zeta}_\theta \tilde
u) (x)$ for $\theta$ in a right neighborhood of $0$ and $x$ near
$\xi$:
\begin{eqnarray*}
  &
  & \int_{\xi-\hat\lambda\theta}^{\xi+\hat\lambda\theta}
    \norma{u (\tau+\theta,x) - U^\sharp_{(u,\tau,\xi)} (\theta,x)}
    \d{x}
  \\   & =
  & \int_{\xi-\hat\lambda\theta}^{\xi+\hat\lambda\theta}
    \norma{
    (S^\zeta_\theta u (\tau)) (x)
    -
    (S^{\tilde\zeta}_\theta \tilde u (\tau)) (x)}
    \d{x}
  \\
  & \leq
  & \int_{\xi-\hat\lambda\theta}^{\xi+\hat\lambda\theta}
    \norma{
    (S^\zeta_\theta u (\tau)) (x)
    -
    (S^{\zeta}_\theta \tilde u (\tau)) (x)}
    \d{x}
    + \int_{\xi-\hat\lambda\theta}^{\xi+\hat\lambda\theta}
    \norma{
    (S^\zeta_\theta \tilde u (\tau)) (x)
    -
    (S^{\tilde\zeta}_\theta \tilde u (\tau)) (x)}
    \d{x}
  \\
\end{eqnarray*}
and the latter two terms are estimated by means of Lemma~\ref{lem:2}
and Lemma~\ref{lem:1}, obtaining
\begin{equation}
  \label{eq:38}
  \begin{array}{@{}cl@{}}
    & \displaystyle
      \dfrac1\theta
      \int_{\xi-\hat\lambda\theta}^{\xi+\hat\lambda\theta}
      \norma{u (\tau+\theta,x) - U^\sharp_{(u,\tau,\xi)} (\theta,x)}
      \d{x}
    \\
    \leq
    & \displaystyle
      \O
      \left(
      \tv\left(
      u,
      \mathopen]\xi-2\hat\lambda\theta, \xi\mathclose[
      \cup \mathopen]\xi, \xi+2\hat\lambda\theta\mathclose[
      \right)
      +
      \tv\left(
      \zeta,
      \mathopen]\xi-2\hat\lambda\theta, \xi\mathclose[
      \cup \mathopen]\xi, \xi+2\hat\lambda\theta\mathclose[
      \right)
      \right)
  \end{array}
\end{equation}
and the statement follows passing to the limit $\theta \to 0$.

We now head towards the proof of~\emph{(ii)} in
Theorem~\ref{thm:sgrp}. Preliminary is the following result.

\begin{lemma}
  \label{lem:LinSol}
  Let $A$ be an $n\times n$ non singular matrix with $n$ real
  eigenvalues $\lambda_1, \ldots, \lambda_n$, $n$ linearly independent
  right, respectively left, eigenvectors $r_1, \ldots, r_n$,
  respectively $l_1, \ldots, l_n$, and let $m$ be a finite vector
  measure. Then, the equation
  \begin{displaymath}
    \partial_t u + A \; \partial_x u
    =
    m
  \end{displaymath}
  generates the $\L1$--Lipschitz semigroup
  \begin{displaymath}
    \begin{array}{@{}c@{\,}c@{\,}ccc@{}}
      L_t
      & \colon
      & \L1 (\reali; \reali^n)
      & \to
      & \L1 (\reali; \reali^n)
      \\
      &
      & u
      & \to
      & \displaystyle
        \sum_{i=1}^n
        l_i \cdot
        \left(
        u (x-\lambda_i\, t)
        +
        \frac{1}{\lambda_i} \,
        \int_{x-\lambda_i t}^x \d{m}
        \right) r_i \,.
    \end{array}
  \end{displaymath}
\end{lemma}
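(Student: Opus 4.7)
\medskip\noindent\textbf{Proof proposal.}
The plan is to diagonalize and reduce to $n$ scalar transport equations with measure source. Let $R = [r_1 \,|\, \dots \,|\, r_n]$ and $L = R^{-1}$, normalized so that $l_i \cdot r_j = \delta_{ij}$. Setting $u^i = l_i \cdot u$ and denoting by $m^i = l_i \cdot m$ the scalar finite measure obtained by pairing $m$ with the $i$--th left eigenvector, the equation $\partial_t u + A\,\partial_x u = m$ decouples componentwise into
\begin{equation*}
  \partial_t u^i + \lambda_i \, \partial_x u^i = m^i, \qquad i = 1, \dots, n.
\end{equation*}

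Each scalar problem is then to be solved along characteristics $s \mapsto x_0 + \lambda_i s$ by Duhamel's formula; the change of variable $y = x_0 + \lambda_i s$ (well defined since $\lambda_i \neq 0$) yields
\begin{equation*}
  u^i(t, x_0 + \lambda_i t) \;=\; u^i_o(x_0) + \frac{1}{\lambda_i} \int_{x_0}^{x_0 + \lambda_i t} \d{m^i}.
\end{equation*}
Setting $x = x_0 + \lambda_i t$ and reassembling $u = \sum_i u^i \, r_i$ produces exactly the announced formula. Making this rigorous when $m^i$ has atoms is the main technical step: I would verify that the candidate satisfies the equation in the distributional sense by substituting it into the weak formulation and applying Fubini, keeping careful track of the sign of $\lambda_i$ and choosing a consistent convention for atoms of $m^i$ on the characteristic through $(t,x)$, so that $L_t u$ is well defined pointwise almost everywhere.

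The semigroup and $\L1$--Lipschitz properties then follow by inspection. Since the source contribution depends only on $m$, one has $L_t u_1 - L_t u_2 = \sum_i \left(l_i \cdot (u_1 - u_2)(\cdot - \lambda_i t)\right) r_i$, and translation invariance of the Lebesgue measure yields $\norma{L_t u_1 - L_t u_2}_{\L1(\reali;\reali^n)} \leq \left(\sum_i \norma{l_i}\,\norma{r_i}\right) \norma{u_1 - u_2}_{\L1(\reali;\reali^n)}$. The semigroup identity $L_{t+s} = L_t \circ L_s$ reduces componentwise to the additivity of integration, $\int_{x - \lambda_i(t+s)}^x = \int_{x - \lambda_i(t+s)}^{x - \lambda_i t} + \int_{x - \lambda_i t}^x$ with respect to $m^i$, combined with the translation identity $(x - \lambda_i t) - \lambda_i s = x - \lambda_i (t+s)$, provided the half--open--interval convention fixed above is used consistently on both sides.
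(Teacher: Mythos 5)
Your proposal is correct and follows exactly the computation the paper has in mind; the paper omits the proof with the remark that it ``relies on a direct computation,'' and the diagonalization into $n$ scalar transport equations, the Duhamel/characteristic change of variables $y=x_0+\lambda_i s$ giving $\frac{1}{\lambda_i}\int_{x-\lambda_i t}^x\d{m^i}$, and the componentwise verification of the semigroup identity and the $\L1$--Lipschitz bound are precisely that direct computation. The one point worth being explicit about --- and you flag it --- is that the pointwise formula is only defined off the (countable, hence $\L1$--null) set of translated atoms of $m^i$; since $A$ is nonsingular, $\lambda_i\neq 0$, so each $\frac{1}{\lambda_i}$ is finite and the half--open interval convention makes the semigroup law exact rather than merely a.e., which is all that is needed.
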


\noindent The proof relies on a direct computation and is
omitted. Note for later use that
\begin{displaymath}
  L_t u
  =
  \sum_{i=1}^n
  l_i \cdot
  u (x-\lambda_i\, t)
  \; r_i
  +
  A^{-1}
  \sum_{i=1}^n
  l_i \cdot
  \int_{x-\lambda_i t}^x \d{m}
  \; r_i
\end{displaymath}

The next Lemma proves~\emph{(ii)} in Theorem~\ref{thm:sgrp} in the
case $x \mapsto u (\tau,x)$ is piecewise constant.

\begin{lemma}
  \label{lem:PCu}
  Under the same assumptions of Theorem~\ref{thm:sgrp}, assume
  moreover that $x \to u (\tau,x)$ is piecewise constant. Then,
  \emph{(ii)} in Theorem~\ref{thm:sgrp} holds.
\end{lemma}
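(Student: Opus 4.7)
The plan is to exploit that in the PC setting both $u(\tau,\cdot)$ and $\zeta$ have finitely many jumps in $\mathopen]a,b\mathclose[$, say at $a < x_1 < x_2 < \cdots < x_N < b$, and to pick $\theta$ so small that the backward characteristic cones issuing from distinct $x_k$ are pairwise disjoint and entirely contained in $\mathopen]a+\hat\lambda\theta, b-\hat\lambda\theta\mathclose[$. Outside the union of these cones both $(t,x)\mapsto u(\tau+t,x)$ and $(t,x)\mapsto U^\flat_{(u;\tau,\xi)}(t,x)$ coincide with the piecewise constant $u(\tau,\cdot)$: for the nonlinear solution this is finite propagation speed, for the linear one it follows from Lemma~\ref{lem:LinSol} combined with the observation that, in the PC case, the source measure $g$ in~\eqref{eq:57} reduces to its atomic part supported on $\mathcal{I}(\zeta)$. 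Consequently, the integral in~\emph{(ii)} decomposes as
\begin{displaymath}
\frac{1}{\theta} \int_{a+\hat\lambda\theta}^{b-\hat\lambda\theta}
\norma{u(\tau+\theta,x) - U^\flat_{(u;\tau,\xi)}(\theta,x)} \d{x}
=
\frac{1}{\theta} \sum_{k=1}^N
\int_{x_k-\hat\lambda\theta}^{x_k+\hat\lambda\theta}
\norma{u(\tau+\theta,x) - U^\flat_{(u;\tau,\xi)}(\theta,x)} \d{x} \,.
\end{displaymath}

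At each $x_k$ I would compare the nonlinear Riemann fan given by Definition~\ref{def:GenRP} with the corresponding linear fan produced by the frozen matrix $A = D_u f(\zeta(\xi), u(\tau,\xi))$ and the point mass of $g$ at $x_k$, via the explicit formula of Lemma~\ref{lem:LinSol}. For a pure $u$-jump at $x_k$ (i.e.~$x_k\notin\mathcal{I}(\zeta)$), the classical Lax-curve second-order expansion, see~\cite[\S~9]{Bressan2000}, bounds the local $\L1$--error by $\O\,\theta\,\norma{\Delta u(\tau,x_k)} \bigl(\norma{\Delta u(\tau,x_k)} + \norma{u(\tau,x_k)-u(\tau,\xi)} + \norma{\zeta(x_k)-\zeta(\xi)}\bigr)$, where the last two addends account for replacing $D_u f(\zeta(x_k),u(\tau,x_k))$ by the frozen $A$. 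For a $\zeta$-jump at $x_k$, the genuinely new contribution is the discrepancy between the coupling imposed by $T$ in~\eqref{eq:1} and the point mass carried by $g$; Lemma~\ref{lem:ExistenceT}.2 applied with $z^+ = \zeta(x_k+)$, $z^- = \zeta(x_k)$, $z^* = \zeta(\xi)$ and $u^* = u(\tau,\xi)$ bounds precisely this mismatch by $\O\,\norma{\Delta\zeta(x_k)}\bigl(\norma{\Delta\zeta(x_k)} + \norma{\zeta(x_k)-\zeta(\xi)} + \norma{u(\tau,x_k-)-u(\tau,\xi)}\bigr)$, which upon integration over $\mathopen]x_k-\hat\lambda\theta,x_k+\hat\lambda\theta\mathclose[$ yields an $\L1$--error of the same quadratic form times $\theta$.

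Summing over $k$ and using $\sum_k \norma{\Delta u(\tau,x_k)} \leq \tv(u(\tau),\mathopen]a,b\mathclose[)$, $\sum_k \norma{\Delta\zeta(x_k)} \leq \tv(\zeta,\mathopen]a,b\mathclose[)$ together with the pointwise estimates $\norma{\zeta(x_k)-\zeta(\xi)} \leq \tv(\zeta,\mathopen]a,b\mathclose[)$ and $\norma{u(\tau,x_k)-u(\tau,\xi)} \leq \tv(u(\tau),\mathopen]a,b\mathclose[)$, the factor $\theta$ produced by the integration cancels the prefactor $1/\theta$ and the desired estimate $\O\bigl(\tv(u(\tau),\mathopen]a,b\mathclose[) + \tv(\zeta,\mathopen]a,b\mathclose[)\bigr)^2$ follows. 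The main obstacle I anticipate is the bookkeeping at mixed points $x_k\in \mathcal{I}(u(\tau,\cdot))\cap\mathcal{I}(\zeta)$, where the nonlinear Riemann fan carries both physical waves (arranged on either side of the stationary jump according to the $i_o$-splitting) and the zero wave at $x_k$, and the linear fan must match all of them coherently; Lemma~\ref{lem:ExistenceT}.2 supplies precisely the decomposition needed to isolate the $\zeta$--contribution from the Lax-curve contributions, but one must verify that the residual cross terms produced by freezing $A$ away from $x_k$ still fit into the claimed quadratic pattern and that non--physical fronts produced by the wave--front tracking approximations (used in the limit defining $u(\tau+\theta,\cdot)$) contribute only at order $\epsilon$, vanishing in the limit.
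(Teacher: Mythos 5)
There is a genuine gap: you restrict to $\theta$ so small that the backward cones issuing from distinct jump points are pairwise disjoint, i.e.~to $\theta$ smaller than the time of first interaction. But~\emph{(ii)} in Theorem~\ref{thm:sgrp} must hold for \emph{every} $\theta \in \mathopen]0, (b-a)/(2\hat\lambda)\mathclose[$, which may be much larger than $\min_k (x_{k+1}-x_k)/(2\hat\lambda)$. Once waves emanating from adjacent $x_k$ interact, the solution $S_\theta u(\tau)$ is no longer a union of non--overlapping Riemann fans, your decomposition into disjoint cones fails, and the local Lax-curve expansion at time $\theta$ has nothing to anchor to. Your appeal to property~3 of Theorem~\ref{thm:sgrp} also only gives the Riemann-fan description ``for $t$ sufficiently small,'' so it cannot be bootstrapped to larger $\theta$.

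The paper sidesteps this by never comparing the two evolutions at the final time $\theta$. Instead it applies the standard error formula (\cite[Theorem~2.9]{Bressan2000}) to compare $S_\theta u_o$ against the trajectory $t \mapsto L_t u_o$, reducing the estimate to a time integral of the \emph{instantaneous} error rate $\liminf_{h\to 0+}\frac{1}{h}\int\norma{S_h L_t u_o - L_h L_t u_o}\d{x}$. Because $L$ is a linear semigroup and the atomic source $g$ acts on constants, $L_t u_o$ stays piecewise constant with controlled total variation for every $t \in [0,\theta]$, regardless of what $S$ does; the instantaneous error can then be localized at the jumps of $L_t u_o$, with the same two-case analysis you sketch (via~\cite[Formula~(3.8)]{BianchiniColombo2002} for jumps outside $\mathcal{I}(\zeta)$, and via Lemma~\ref{lem:ExistenceT}.2 together with an auxiliary zero-wave profile $w$ for jumps in $\mathcal{I}(\zeta)$), but now \emph{at every intermediate time $t$}. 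The resulting bound $\O\left(\tv(u_o,\mathopen]a,b\mathclose[) + \tv(\zeta,\mathopen]a,b\mathclose[)\right)^2$ is uniform in $t$, so integrating over $[0,\theta]$ and dividing by $\theta$ gives the required estimate for \emph{all} admissible $\theta$. Your local building blocks (the Lax-curve expansion and the use of Lemma~\ref{lem:ExistenceT}.2 with $z^* = \zeta(\xi)$, $u^* = u(\tau,\xi)$) are essentially the right ingredients, but they must be embedded in this time-integrated framework rather than applied once at the endpoint. One further minor point: the reference to ``non-physical fronts produced by the wave-front tracking approximations'' is misplaced here, since $u(\tau+\theta,\cdot) = S_\theta u(\tau)$ is the exact semigroup solution and the comparison with $L_\theta u(\tau)$ is done at the semigroup level, with no residual $\epsilon$ to dispose of.
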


\begin{proof}
  With the notation in Lemma~\ref{lem:LinSol}, recalling the
  definition~\eqref{eq:HelpC} of $U^\flat$ in the case of a piecewise
  constant $\zeta$,
  \begin{equation}
    \label{eq:37}
    U^\flat_{(u;\tau,\xi)} (\theta,\cdot) = L_\theta u (\tau)
    \mbox{ if }
    \left\{
      \begin{array}{@{}r@{\,}c@{\,}l@{}}
        A
        & =
        & D_u f \left(\zeta (\xi), u (\tau,\xi)\right)
        \\
        k (\bar x)
        & =
        & \Xi\left(\zeta (\bar x+), \zeta (\bar x), u (\tau, \xi)\right)
        \\
        &
        & \quad -
          f\left(\zeta (\bar x+),u (\tau, \xi)\right)
          +
          f\left(\zeta (\bar x),u (\tau, \xi)\right)
        \\
        m
        &  =
        &  \sum\limits_{\bar x\in \mathcal{I} (\zeta)}
          k (\bar x) \, \delta_{\bar x} \,.
      \end{array}
    \right.
  \end{equation}

  Below, set for simplicity $u_o = u (\tau)$ and assume that
  $\tau=0$. Use~\cite[Theorem~2.9]{Bressan2000} with the notation in
  the statement of Theorem~\ref{thm:sgrp}, recalling that
  $x \mapsto L_t u_o (x)$ is piecewise constant, since so is $u_o$ and
  $L$ is linear.
  \begin{eqnarray*}
    &
    & \dfrac{1}{\theta}
      \int_{a+\hat\lambda \theta}^{b-\hat\lambda \theta}
      \norma{S_\theta u_o - U^\flat_{(u_o;0,\xi)}} \d{x}
    \\
    & =
    & \dfrac{1}{\theta}
      \int_{a+\hat\lambda \theta}^{b-\hat\lambda \theta}
      \norma{S_\theta u_o - L_\theta u_o} \d{x}
    \\
    & \leq
    & \O \, \dfrac{1}{\theta} \int_0^\theta
      \liminf_{h \to 0+}
      \dfrac{1}{h}
      \int_{a + \hat\lambda (t+h)}^{b - \hat\lambda (t+h)}
      \norma{S_h L_t u_o - L_h L_t u_o} \d{x} \d{t}
    \\
    & \leq
    & \O \, \dfrac{1}{\theta} \int_0^\theta
      \sum_{\bar x \in \mathcal{I} (L_t u_o) \cup \mathcal{I} (\zeta)}
      \liminf_{h \to 0+}
      \dfrac{1}{h}
      \int_{\bar x-\hat\lambda h}^{\bar x +\hat\lambda h}
      \norma{S_h L_t u_o - L_h L_t u_o} \d{x} \d{t} \,.
  \end{eqnarray*}
  To compute the latter sum, we distinguish the two cases
  $\bar x \not\in \mathcal{I} (\zeta)$ or
  $\bar x \in \mathcal{I} (\zeta)$.

  In the former case, in a neighborhood of $\bar x$ we have that the
  semigroup $L$ locally coincide with that generated by the
  homogeneous equation $\partial_t u + A \, \partial_x u = 0$. Hence,
  by~\cite[Formula~(3.8)]{BianchiniColombo2002}, Lemma~\ref{lem:PCu}
  and~\eqref{eq:37}, we have
  \begin{eqnarray*}
    &
    & \dfrac{1}{h}
      \int_{\bar x-\hat\lambda h}^{\bar x +\hat\lambda h}
      \norma{S_h L_t u_o - L_h L_t u_o} \d{x}
    \\
    & \leq
    &  \O
      \sup_{x \in\mathopen]a,b\mathclose[} \norma{
      A - D_u f\left(\zeta (x), (L_t u_o) (x)\right)
      }
      \norma{\Delta (L_t \, u_o) (\bar x)}
    \\
    & \leq
    & \O
      \sup_{x \in\mathopen]a,b\mathclose[}
      \norma{
      D_u f\left(\zeta (\xi),u_o(\xi)\right) -
      D_u f\left(\zeta (x), (L_t u_o) (x)\right)
      }
      \norma{\Delta (L_t \, u_o) (\bar x)}
    \\
    & \leq
    & \O
      \sup_{x \in\mathopen]a,b\mathclose[}
      \left(
      \norma{\zeta (x) - \zeta (\xi)}
      +
      \norma{u_o(\xi) - (L_t u_o) (x)}
      \right)
      \norma{\Delta (L_t \, u_o) (\bar x)}
    \\
    & \leq
    & \O \, \norma{\Delta (L_t u_o) (\bar x)}
      \left(
      \tv\left(u_o, \mathopen]a,b\mathclose[\right)
      +
      \tv\left(\zeta, \mathopen]a,b\mathclose[\right)
      \right) \,.
  \end{eqnarray*}
  Hence,
  \begin{eqnarray*}
    &
    &  \sum_{\bar x \in \mathcal{I} (L_t u_o) , \bar x \not \in \mathcal{I}(\zeta)}
      \dfrac{1}{h}
      \int_{\bar x-\hat\lambda h}^{\bar x +\hat\lambda h}
      \norma{S_h L_t u_o - L_h L_t u_o} \d{x}
    \\
    & \leq
    &  \O \sum_{\bar x \in \mathcal{I} (L_t u_o) , \bar x \not \in \mathcal{I}(\zeta)}
      \norma{\Delta (L_t u_o) (\bar x)}
      \left(
      \tv\left(u_o, \mathopen]a,b\mathclose[\right)
      +
      \tv\left(\zeta, \mathopen]a,b\mathclose[\right)
      \right)
    \\
    & \leq
    & \O
      \left(
      \tv\left(u_o, \mathopen]a,b\mathclose[\right)
      +
      \tv\left(\zeta, \mathopen]a,b\mathclose[\right)
      \right)^2 \,.
  \end{eqnarray*}

  Assume now that $\bar x \in \mathcal{I} (\zeta)$. Using the map $T$
  defined in Lemma~\ref{lem:ExistenceT}, define
  \begin{eqnarray*}
    \tilde u
    & =
    & (L_tu_o) (\bar x-)
    \\
    w (x)
    & =
    & \left\{
      \begin{array}{l@{\quad}r@{\,}c@{\,}l}
        \tilde u
        & x
        & <
        & \bar x
        \\
        T\left(\zeta (\bar x+), \zeta (\bar x-)\right)
        (\tilde u)
        & x
        & >
        & \bar x
      \end{array}
          \right.
  \end{eqnarray*}
  and note that
  $(L_t u_o) (\bar x+) = \tilde u + A^{-1}\, k (\bar x)$.  Then,
  $S_h w = w$ and $L_h L_t u_o = L_t u_o$ in a neighborhood of
  $\bar x$. Moreover, recall the Lipschitz continuity of $S_h$
  restricted to dependency domains:
  \begin{displaymath}
    \int_{\bar x - \bar\lambda h}^{\bar x + \bar\lambda h}
    \norma{S_h L_t u_o - S_h w} \d{x}
    \leq
    \int_{\bar x - 2\bar\lambda h}^{\bar x + 2\bar\lambda h}
    \norma{L_t u_o - w} \d{x} \,,
  \end{displaymath}
  and, using the notation in~\eqref{eq:37}, proceed
  \begin{eqnarray*}
    &
    & \dfrac{1}{h}
      \int_{\bar x - \hat\lambda h}^{\bar x +\hat\lambda h}
      \norma{S_h L_t u_o - L_h L_t u_o} \d{x}
    \\
    & \leq
    & \dfrac{1}{h}
      \int_{\bar x - \hat\lambda h}^{\bar x +\hat\lambda h}
      \norma{S_h L_t u_o - S_h w} \d{x}
      +
      \dfrac{1}{h}
      \int_{\bar x - \hat\lambda h}^{\bar x +\hat\lambda h}
      \norma{S_h w - L_h L_t u_o} \d{x}
    \\
    & \leq
    & \O \,
      \norma{
      T\left(\zeta (\bar x+), \zeta (\bar x)\right)
      (\tilde u)
      -
      \left(
      \tilde u
      +
      A^{-1} \, k (\bar x)
      \right)}
    \\
    & \leq
    & \O \,
      \norma{
      A \, T\left(\zeta (\bar x+), \zeta (\bar x)\right)
      (\tilde u)
      -
      \left(
      A \tilde u
      +
      k (\bar x)
      \right)}
    \\
    & =
    & \O \, \left\|
      D_u f\left(\zeta (\xi), u_o (\xi)\right)
      \left(
      T\left(\zeta (\bar x+), \zeta (\bar x)\right)
      (\tilde u)
      -
      \tilde u\right)
      \right.
    \\
    &
    & \left.
      \qquad
      -\Xi\left(\zeta (\bar x+), \zeta (\bar x), u_o (\xi)\right)
      +
      f\left(\zeta (\bar x+),u_o (\xi)\right)
      -
      f\left(\zeta (\bar x),u_o (\xi)\right)
      \right\|
    \\
    & \leq
    & \O \,
      \norma{\Delta \zeta (\bar x)}
      \left(
      \norma{\zeta (\bar x+) - \zeta (\xi)}
      +
      \norma{\Delta \zeta (\bar x)}
      +
      \norma{u_o (\xi) - \tilde u}
      \right)
    \\
    & \leq
    & \O \,
      \norma{\Delta \zeta (\bar x)}
      \left(
      \tv\left(u_o, \mathopen]a,b\mathclose[\right)
      +
      \tv\left(\zeta, \mathopen]a,b\mathclose[\right)
      \right)
      \,,
  \end{eqnarray*}
  where we used~2.~in Lemma~\ref{lem:ExistenceT}. Now, we add over
  $\bar x \in \mathcal{I} (\zeta)$:
  \begin{displaymath}
    \sum_{x \in \mathcal{I} (\zeta)}
    \dfrac{1}{h}
    \int_{\bar x - \hat\lambda h}^{\bar x +\hat\lambda h}
    \norma{S_h L_t u_o - L_h L_t u_o} \d{x}
    \leq
    \O
    \left(
      \tv\left(u_o, \mathopen]a,b\mathclose[\right)
      +
      \tv\left(\zeta, \mathopen]a,b\mathclose[\right)
    \right)^2
    \,,
  \end{displaymath}
  completing the proof of Lemma~\ref{lem:PCu}.
\end{proof}

To complete the proof of~\emph{(ii)} in Theorem~\ref{thm:sgrp},
consider the case of $x \to u (\tau,x)$ not necessarily piecewise
constant. We keep using the equalities
$u (\tau+\theta) = S_\theta\left(u (\tau)\right)$ and
$U^\flat_{(u;\tau,\xi)} = L_\theta\left(u (\tau)\right)$, the linear
operator $L$ being defined in Lemma~\ref{lem:LinSol} with $A$ and $k$
as in~\eqref{eq:37}. Then, for $\epsilon>0$ call $u^\epsilon$ a
piecewise constant approximation of $u (\tau)$ with
$\tv (u^\epsilon) \leq \tv \left(u (\tau)\right)$. By the Lipschitz
continuity of $S_\theta$ and $L_t$, we have
\begin{eqnarray*}
  &
  & \dfrac1\theta \int_{a-\hat\lambda \theta}^{b+\hat\lambda \theta}
    \norma{u (\tau+\theta,x) - U^\flat_{(u;\tau,\xi)} (\theta,x)} \d{x}
  \\
  & \leq
  & \O \dfrac{\epsilon}\theta
    + \dfrac1\theta
    \int_{a-\hat\lambda \theta}^{b+\hat\lambda \theta}
    \norma{(S_\theta u^\epsilon) (x) - (L_\theta u^\epsilon) (x)} \d{x}
  \\
  & \leq
  & \O \dfrac{\epsilon}\theta
    +
    \O
    \left(
    \tv\left(u (\tau), \mathopen]a,b\mathclose[\right)
    +
    \tv\left(\zeta, \mathopen]a,b\mathclose[\right)
    \right)^2 \,.
\end{eqnarray*}
Passing first to the limit $\epsilon \to 0$ and then to the limit
$\theta \to 0$, we complete the proof.

\subsection{The General Case}

Consider now the case $\zeta \in \BV (\reali; \reali^p)$.

\begin{lemma}
  \label{lem:uno}
  Assume $f$ satisfies~\ref{it:f1}--\ref{it:f4} and $\Xi$
  satisfies~\ref{eq:Xi1}--\ref{eq:Xi4}. Then, for all
  $z \in \mathcal{Z}$, $v \in \reali^p$.  $\omega \in \Omega$, if
  $\delta>0$ is sufficiently small,
  \begin{displaymath}
    \Xi (z+\delta\, v, z, \omega)
    -
    f (z+\delta\, v, \omega)
    +
    f (z,\omega)
    =
    \delta
    \left(
      D_v^+\Xi (z,z,\omega)
      -
      D_zf (z,\omega) \, v
    \right)
    + \O \, \sigma (\delta) \, \delta \,.
  \end{displaymath}
\end{lemma}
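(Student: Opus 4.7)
The plan is to split the left hand side into the two natural pieces
\begin{displaymath}
  \bigl[\Xi(z+\delta\,v,z,\omega) - \delta\,D_v^+\Xi(z,z,\omega)\bigr]
  \;-\;
  \bigl[f(z+\delta\,v,\omega) - f(z,\omega) - \delta\,D_z f(z,\omega)\,v\bigr]
\end{displaymath}
and bound each bracket independently, then rewrite the combined error in the form $\O\,\sigma(\delta)\,\delta$.

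For the first bracket, assumption~\ref{eq:Xi4} provides the required control but only for $v$ in the unit ball, so first I would observe that the one sided derivative defined in~\eqref{eq:26} is positively homogeneous: a direct substitution $s=c\,t$ in the defining limit yields $D_{c\,v}^+\Xi=c\,D_v^+\Xi$ for every $c>0$. Writing $\delta\,v = (\delta\norma{v})\,\hat v$ with $\hat v=v/\norma{v}$ and applying~\ref{eq:Xi4} to the unit vector $\hat v$ with parameter $t=\delta\norma{v}$ then gives, for $\delta\norma{v}$ small,
\begin{displaymath}
  \norma{\Xi(z+\delta\,v,z,\omega) - \delta\,D_v^+\Xi(z,z,\omega)}
  \leq
  \norma{v}\,\sigma(\delta\norma{v})\,\delta \,.
\end{displaymath}

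For the second bracket, a standard first order Taylor expansion in the $z$ variable, justified by $f\in\C4$ from~\ref{it:f1}, produces
\begin{displaymath}
  \norma{f(z+\delta\,v,\omega) - f(z,\omega) - \delta\,D_z f(z,\omega)\,v}
  \leq
  \O\,\delta^2 \,.
\end{displaymath}

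Summing the two bounds yields the stated identity up to a remainder of order $\sigma(\delta\norma{v})\,\delta + \delta^2$. To recast this as $\O\,\sigma(\delta)\,\delta$, I would replace $\sigma$ by the non decreasing modulus $\tilde\sigma(t):=\max\{\sigma(t),\,t\}$, which still satisfies $\tilde\sigma(t)\to 0$ as $t\to 0$ and still fulfils~\ref{eq:Xi4}; the factor $\norma{v}$ is harmless since the constant hidden in $\O$ is allowed to depend on $v$, consistently with the conventions fixed at the beginning of Section~\ref{sec:assumpt-main-results}. The only mildly delicate step is the positive homogeneity of the Dini derivative, which is a one line verification rather than a genuine obstacle; the rest is routine bookkeeping.
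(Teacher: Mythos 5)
Your decomposition and chain of estimates reproduce what the paper itself merely asserts (``directly follows from~\ref{eq:Xi4} and from the Taylor expansion of~$f$''), and the positive homogeneity of the Dini derivative $D_{cv}^+\Xi = c\,D_v^+\Xi$ for $c>0$, used to extend~\ref{eq:Xi4} beyond the unit ball, is a legitimate and necessary observation if one reads the lemma literally for all $v\in\reali^p$. The $\tilde\sigma(t)=\max\{\sigma(t),t\}$ device to absorb the $\delta^2$ Taylor remainder into a modulus is also correct, since enlarging $\sigma$ preserves the validity of~\ref{eq:Xi4}.

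One detail is inconsistent with the paper, however: at the start of Section~\ref{sec:assumpt-main-results} the paper declares that ``$\O$ denotes a constant depending exclusively on $f$, $\Xi$ and on a neighborhood of $\bar u$,'' so $\O$ is \emph{not} permitted to depend on $v$. Your bound for the $\Xi$ bracket is $\norma{v}\,\sigma\!\left(\delta\norma{v}\right)\delta$, and for $\norma{v}>1$ the factor $\norma{v}$ and the argument $\delta\norma{v}$ cannot be uniformly absorbed into a $v$-independent $\O\,\sigma(\delta)$. The resolution is that the lemma is only ever invoked in the proof of Theorem~\ref{thm:sgrp} (estimate of $\mathcal{E}_4$) with $v=\bar v$ taken from~\eqref{eq:62}, which satisfies $\norma{\bar v}\le 1$ because it is an average of the unit-norm density $v$ from~\eqref{eq:20}; in that regime $\sigma(\delta\norma{v})\le\sigma(\delta)$ by monotonicity and the factor $\norma{v}\le 1$ is harmless. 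So the estimate you proved is correct and exactly what the paper uses, but you should either restrict the statement to $\norma{v}\le 1$ or make the $\O$ explicitly $\norma{v}$-dependent, rather than appeal to a convention the paper does not grant.
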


\noindent The proof directly follows from--\ref{eq:Xi4} and from the
Taylor expansion of $f$.

\begin{proofof}{Theorem~\ref{thm:sgrp}}
  Let $\zeta \in \BV (\reali; \mathcal{Z})$. Call
  $\mathcal{I} (\zeta)$ the, at most countable, set of points of jump
  in $\zeta$. Recall that $D\zeta$ is a finite vector measure. Let
  $\mu$ and $v$ be as in~\eqref{eq:20}. By Lusin
  Theorem~\cite[Theorem~2.24]{Rudin}, for any $h > 0$, there exists a
  $\tilde v^h \in \Cc0 (\reali; \reali^p)$ such that
  \begin{equation}
    \label{eq:29}
    \norma{\tilde v^h (x)} \leq 1
    \quad \mbox{ and } \quad
    \norma{D\zeta}
    \left(
      \left\{x \in \reali \colon \tilde v^h (x) \neq v (x)\right\}
    \right) < h \,.
  \end{equation}
  Following~\cite[Step~1, \S~4.3]{ColomboGuerraHolle}, introduce
  points $\{ x_1, \ldots, x_{N_h-1}\} \in \reali$ such that:
  \begin{enumerate}[label={\bf{(\roman*)}}]
  \item \label{item:r1} $x_0 = -\infty$, $x_1 < -1/h$, $x_{i-1} < x_i$
    for $i = 2, \ldots, N_h-1$, $x_{N_h-1} > 1/h$ and
    $x_{N_h} = +\infty$.
  \item \label{item:r2}
    $\sum_{x \in \mathcal{I} (\zeta)\setminus \mathcal{I}^h}
    \left\|\Delta \zeta(x)\right\|< h$ for a suitable set of points
    $\mathcal{I}^h$ contained in $\{x_1, x_2, \ldots, x_{N_h -1}\}$.
  \item \label{item:r3} Whenever\footnote{Everywhere, $\sharp A$
      stands the (finite) cardinality of the set $A$.}
    $x_i \in \mathcal{I}^h$,
    $\tv\left(\zeta, \left[x_{i-1}, x_i \right[\right) < \left. h
      \middle/ (1+\sharp \mathcal{I}^h) \right.$.
  \item \label{item:r4}
    $\tv \left(\zeta, \left]x_{i-1}, x_i\right[\right) < h$ for all
    $i = 1, \ldots, N_h$.
  \item \label{item:r6}
    $\norma{\tilde v^h \left(x'\right) - \tilde v^h \left(x''\right)}
    < h$ for $x',\,x''\in \mathopen]x_{i-1},x_{i}\mathclose[$,
    $i=1,\ldots,N_{h}$.

  \item \label{item:r7} $x_i - x_{i-1} \in \left]0, h\right[$ for all
    $i=2, \ldots, N_h-1$.
  \end{enumerate}
  \noindent Points satisfying~\ref{item:r1} are easily
  constructed. Then, adding more points, one fulfills
  also~\ref{item:r2} and this condition fully defines $\mathcal{I}^h$
  and, hence, $\sharp \mathcal{I}^h$. Iteratively continuing to add
  points, thus increasing $N_h$, we satisfy also~\ref{item:r3},
  \ref{item:r4}, \ref{item:r6} and~\ref{item:r7}, in this
  order. Define the piecewise constant map
  \begin{equation}
    \label{eq:24}
    \zeta^h
    =
    \zeta\left(-\infty\right)\caratt{]-\infty,x_{1}]}
    +
    \sum_{i=2}^{N_h-1} \zeta \left(x_{i-1}+\right) \;
    \caratt{]x_{i-1}, x_i]}
    +
    \zeta\left(x_{N_{h}-1}+\right)\caratt{]x_{N_{h}-1},+\infty [}
  \end{equation}
  and note that
  \begin{equation}
    \label{eq:51}
    \zeta^h (x_i) = \zeta (x_{i-1}+)
    \quad \mbox{ and } \quad
    \zeta^h (x_i+) = \zeta (x_i+) \,.
  \end{equation}
  The approximations $\zeta^h$ converge to $\zeta$ uniformly on
  $\reali$ as $h \to 0$. Indeed, fix $h$ and for any $x \in \reali$,
  by~\ref{item:r1} we have $x \in \left]x_{i-1}, x_i\right]$
  (obviously excluding $+\infty$) and for $i \in \{1, \ldots, N_h\}$,
  by~\ref{item:r4},
  \begin{displaymath}
    \norma{\zeta^h (x) - \zeta (x)}
    \leq
    \tv\left(\zeta, \left]x_{i-1}, x_i\right[\right)
    \leq h \,.
  \end{displaymath}
  Call
  $S^{\zeta^h} \colon \mathopen[0, +\infty\mathclose[ \times
  \mathcal{D}_{\delta}^{\zeta^h} \to \mathcal{D}_\delta^{\zeta^h}$ the
  semigroup whose existence is proved in the piecewise constant case
  in \S~\ref{subs:proof--theor-refthm:s}, provided $\delta$ is
  sufficiently small. We prove that as $h\to 0$ the semigroups
  $S^{\zeta^h}$ converge to a semigroup $S^\zeta$ in $\L1$.

  Using the notation~\eqref{def:2.6}, introduce the sets:
  \begin{equation}
    \label{eq:22}
    \displaystyle
    \check{\mathcal{D}}_\delta^\zeta
    =
    \bigcap_{h>0} \mathcal{D}_\delta^{\zeta^h}
    \qquad\qquad\qquad
    \hat{\mathcal{D}}_\delta^\zeta
    =
    \overline{\bigcup_{h>0} \mathcal{D}_\delta^{\zeta^h}}
  \end{equation}
  the latter closure is understood in the strong $\L1$ topology. If
  $\zeta$ has sufficiently small total variation then suitably
  choosing positive $\delta$ and $\delta'$
  \begin{displaymath}
    \check{\mathcal{D}}_\delta^\zeta
    \subseteq
    \hat{\mathcal{D}}_\delta^\zeta
    \subseteq
    \check{\mathcal{D}}_{\delta'}^\zeta
  \end{displaymath}
  and all these sets are not empty since they contain all $u$ with
  sufficiently small total variation.

  Since $\check{\mathcal{D}}_{\delta'}^{\zeta}$ is separable with
  respect to the strong $\L1$ topology, by a diagonalization process
  there exists a sequence $h_i$ such that for all
  $u \in \check{\mathcal{D}}_{\delta'}^\zeta$ and for all
  $t \in \mathopen[0, +\infty\mathclose[$, the sequence
  $S^{\zeta^{h_i}}_t u$ converges in $\Lloc1 (\reali; \reali^n)$ to a
  limit which we define as $S^\zeta_t u$. Clearly,
  $S_t^\zeta u \in \hat{\mathcal{D}}_{\delta'}^\zeta$. Moreover,
  whenever $S_t^\zeta u \in \check{\mathcal{D}}_{\delta'}^\zeta$,
  thanks to the Lipschitz continuity of
  $u \mapsto S^{\zeta_{h_i}}_t u$, the semigroup property holds in the
  limit $h_i \to 0$, i.e., $S_s^\zeta S_t^\zeta u = S_{s+t}^\zeta u$
  for all $s \geq 0$.

  Define now
  \begin{equation}
    \label{eq:27}
    \displaystyle
    \mathcal{D}_\delta^\zeta
    =
    \left\{
      u \in \hat{\mathcal{D}}_\delta^\zeta \colon
      \exists\,t \in \reali_+ \mbox{ and } \exists w \in
      \check{\mathcal{D}}_\delta^\zeta \mbox{ such that } S^\zeta_t w = u
    \right\} \,.
  \end{equation}
  Note that
  \begin{displaymath}
    \check{\mathcal{D}}_\delta^\zeta
    \subseteq
    \mathcal{D}_\delta^\zeta
    \subseteq
    \hat{\mathcal{D}}_\delta^\zeta \,.
  \end{displaymath}
  For all $t \in \reali_+$, the domain $\mathcal{D}_\delta^\zeta$ is
  invariant with respect to $S_t^\zeta$, in the sense that
  $(S_t^\zeta \mathcal{D}_\delta^\zeta) \subseteq
  \mathcal{D}_\delta^\zeta$.

  Following the lines of~\cite[Theorem~2.2]{ColomboGuerraHolle}, the
  above construction proves 1.~in
  Theorem~\ref{thm:sgrp}. Condition~2.~in Theorem~\ref{thm:sgrp} also
  follows, since the semigroup $S^{\zeta^{h_i}}$ admits a Lipschitz
  constant independent of $h_i$. Statement~3.~in
  Theorem~\ref{thm:sgrp} now follows from the results in
  \S~\ref{subs:proof--theor-refthm:s}, since for $h$ sufficiently
  small $\zeta^h$ coincides with $\zeta$.

  To prove~4.~in Theorem~\ref{thm:sgrp}, we consider first~\emph{(i)}.

  \paragraph{Proof of~\textit{(i)}.} To simplify the notation, we
  denote $h_i$ by $h$. By $U^{\sharp h}_{(u;\tau,\xi)}$ denote the
  solution to the Riemann Problem~\eqref{eq:64} with $\zeta$ replaced
  by $\zeta^h$. Clearly,
  \begin{eqnarray}
    \label{eq:54}
    &
    & \dfrac1\theta \int_{\xi-\hat\lambda \theta}^{\xi + \hat\lambda \theta}
      \norma{
      \left(S_\theta^\zeta u (\tau)\right) (x)
      -
      U^\sharp_{(u;\tau,\xi)} (\theta,x)
      }
      \d{x}
    \\
    \nonumber
    & \leq
    & \dfrac1\theta
      \int_{\xi-\hat\lambda \theta}^{\xi + \hat\lambda \theta}
      \norma{
      \left(S_\theta^\zeta u (\tau)\right) (x)
      -
      \left(S_\theta^{\zeta^h} u (\tau)\right) (x)
      }
      \d{x}
    \\
    \nonumber
    &
    & +
      \dfrac1\theta \int_{\xi-\hat\lambda \theta}^{\xi + \hat\lambda \theta}
      \norma{
      \left(S_\theta^{\zeta^h} u (\tau)\right) (x)
      -
      U^{\sharp h}_{(u;\tau,\xi)} (\theta,x)
      }
      \d{x}
    \\
    \label{eq:76}
    &
    & +
      \dfrac1\theta \int_{\xi-\hat\lambda \theta}^{\xi + \hat\lambda \theta}
      \norma{
      U^{\sharp h}_{(u;\tau,\xi)} (\theta,x)
      -
      U^\sharp_{(u;\tau,\xi)} (\theta,x)
      }
      \d{x}.
  \end{eqnarray}
  The first integral in the right hand side above vanishes in the
  limit $h \to 0$. To estimate the second integral, use~\eqref{eq:38}
  and~\cite[Formula~(4.29)]{ColomboGuerraHolle} to get
  \begin{eqnarray*}
    &
    & \dfrac1\theta \int_{\xi-\hat\lambda \theta}^{\xi + \hat\lambda \theta}
      \norma{
      \left(S_\theta^{\zeta^h} u (\tau)\right) (x)
      -
      U^{\sharp h}_{(u;\tau,\xi)} (\theta,x)
      }
      \d{x}
    \\
    & \leq
    & \O
      \left(
      \tv\left(
      u (\tau),
      \mathopen]\xi-2\hat\lambda\theta, \xi\mathclose[
      \cup \mathopen]\xi, \xi+2\hat\lambda\theta\mathclose[
      \right)
      +
      \tv\left(
      \zeta^h,
      \mathopen]\xi-2\hat\lambda\theta, \xi\mathclose[
      \cup \mathopen]\xi, \xi+2\hat\lambda\theta\mathclose[
      \right)
      \right)
    \\
    & \leq
    & \O
      \left(
      \tv\left(
      u (\tau),
      \mathopen]\xi-2\hat\lambda\theta, \xi\mathclose[
      \cup \mathopen]\xi, \xi+2\hat\lambda\theta\mathclose[
      \right)
      {+}
      \tv\left(
      \zeta,
      \mathopen]\xi-2\hat\lambda\theta, \xi\mathclose[
      \cup \mathopen]\xi, \xi+2\hat\lambda\theta\mathclose[
      \right)
      +
      h
      \right) .
  \end{eqnarray*}
  In the limits, first for $h\to 0$ and then for $\theta \to 0$, the
  latter term above vanishes.

  Concerning the third term~\eqref{eq:76}, use Lemma~\ref{lem:zetaPbR}
  and obtain
  \begin{displaymath}
    \dfrac1\theta \int_{\xi-\hat\lambda \theta}^{\xi + \hat\lambda \theta}
    \norma{
      U^{\sharp h}_{(u;\tau,\xi)} (\theta,x)
      -
      U^\sharp_{(u;\tau,\xi)} (\theta,x)
    }
    \d{x}
    \leq
    \O
    \left(
      \norma{\zeta^h (\xi) - \zeta (\xi)}
      +
      \norma{\zeta^h (\xi+) - \zeta (\xi+)}
    \right)
  \end{displaymath}
  which vanishes as $h\to 0$ by the uniform convergence of $\zeta^h$
  to $\zeta$, completing the proof of~\emph{(i)}.

  \paragraph{Proof of~\textit{(ii)}.}
  We now pass to~\emph{(ii)} in item~4.~of Theorem~\ref{thm:sgrp}.
  the following definitions and preliminary results are of use below.

  Recall the notation in~\eqref{eq:20}. For $i=1, \ldots, N_h$, let
  \begin{eqnarray}
    \label{eq:60}
    \delta_i
    & =
    & \norma{\mu}\left(\mathopen]x_{i-1}, x_i \mathclose[\right) \,;
    \\
    \label{eq:62}
    v_i
    & =
    & \left\{
      \begin{array}{l@{\qquad}r@{\,}c@{\,}l}
        \frac{1}{\delta_i} \, \mu\left(\mathopen]x_{i-1}, x_i \mathclose[\right)
        & \delta_i
        & \neq
        & 0 \,;
        \\
        0
        & \delta_i
        & =
        & 0 \,;
      \end{array}
          \right.
    \\
    \label{eq:61}
    v^h
    & =
    & \sum_{i=1}^{N_h-1} v_i \; \caratt{\mathopen]x_{i-1}, x_i \mathclose]}
      + v_{\strut N_h} \;
      \caratt{\mathopen]x_{\strut N_h-1}, +\infty \mathclose[} \,.
  \end{eqnarray}
  Note also that for $\delta_i \neq 0$,
  $v_i = \frac{1}{\delta_i} \int_{\mathopen]x_{i-1}, x_i \mathclose[}
  v \, \d{\norma{\mu}}$.

  \paragraph{Claim: }We have the convergence
  \begin{equation}
    \label{eq:63}
    \lim_{h\to 0}
    \int_{\reali} \norma{v^h - v}
    \d{\norma{\mu}}
    = 0
    \,.
  \end{equation}
  Indeed, recalling~\eqref{eq:29},
  \begin{eqnarray}
    \nonumber
    &
    &
      \int_{\reali} \norma{v^h (x) - v (x)}
      \d{\norma{\mu} (x)}
    \\
    \nonumber
    & =
    & \sum_{i=1}^{N_h} \int_{]x_{i-1}, x_i[}
      \norma{v (x) - v_i}
      \d{\norma{\mu} (x)}
    \\
    \nonumber
    & =
    & \sum_{i=1, N_h \atop \delta_i \neq 0}
      \dfrac{1}{\delta_i}
      \int_{(\mathopen]x_{i-1},x_i\mathclose[)^2}
      \norma{v (x) - v (y)} \d{(\norma{\mu}\otimes\norma{\mu})}  (x,y)
    \\
    \label{eq:43}
    & =
    & \!\!\!\!\!\!
      \sum_{i=1, N_h\atop \delta_i \neq 0} \!
      \dfrac{1}{\delta_i} \,
      \int_{(\mathopen]x_{i-1},x_i\mathclose[)^2} \!
      \left[
      \norma{v (x) - \tilde v^h (x)}
      {+} \norma{\tilde v^h (y) - v (y)}
      \right]
      \d{(\norma{\mu}\otimes\norma{\mu})}  (x,y)
    \\
    \label{eq:44}
    &
    & + \sum_{i=1, N_h\atop \delta_i \neq 0}
      \dfrac{1}{\delta_i}
      \int_{(\mathopen]x_{i-1},x_i\mathclose[)^2}
      \norma{\tilde v^h (x) - \tilde v^h (y)}
      \d{(\norma{\mu}\otimes\norma{\mu})}  (x,y) \,.
  \end{eqnarray}
  The two terms in the integral in~\eqref{eq:43} are estimated in the
  same way, using~\eqref{eq:29}, as
  \begin{eqnarray*}
    &
    & \sum_{i=1, N_h\atop \delta_i \neq 0}
      \int_{(\mathopen]x_{i-1},x_i\mathclose[)^2}
      \dfrac{1}{\delta_i}
      \norma{v (x) - \tilde v^h (x)}
      \d{(\norma{\mu}\otimes\norma{\mu})} \! (x,y)
    \\
    & =
    & \sum_{i=1, N_h\atop \delta_i \neq 0}
      \int_{\mathopen]x_{i-1},x_i\mathclose[}
      \norma{v (x) - \tilde v^h (x)}
      \d{\norma{\mu}} \! (x)
    \\
    & =
    & \int_{\reali}
      \norma{v (x) - \tilde v^h (x)}
      \d{\norma{\mu}} \! (x)
    \\
    & \leq
    & \int_{\left\{x \in\reali\colon v (x) \neq \tilde v^h (x)\right\}}
      \left(\norma{v (x)} + \norma{\tilde v^h (x)}\right)
      \d{\norma{\mu}} \! (x)
    \\
    &\leq
    & 2 \, h
    \\
    & \to
    & 0 \quad \mbox{ as } h \to 0\,.
  \end{eqnarray*}
  We now estimate the term~\eqref{eq:44} by means of~\ref{item:r6}:
  \begin{eqnarray*}
    &
    & \sum_{i=1, N_h\atop \delta_i \neq 0}
      \dfrac{1}{\delta_i}
      \int_{(\mathopen]x_{i-1},x_i\mathclose[)^2}
      \norma{\tilde v^h (x) - \tilde v^h (y)}
      \d{(\norma{\mu}\otimes\norma{\mu})} \! (x,y)
    \\
    & \leq
    & h
      \sum_{i=1, N_h\atop \delta_i \neq 0}
      \dfrac{1}{\delta_i}
      \int_{(\mathopen]x_{i-1},x_i\mathclose[)^2}
      \d{(\norma{\mu}\otimes\norma{\mu})} \! (x,y)
    \\
    & \leq
    & h
      \sum_{i=1}^{N_h}
      \delta_i
    \\
    & \leq
    & h \; \norma{\mu} (\reali)
    \\
    & \to
    & 0 \quad \mbox{ as } h \to 0\,,
  \end{eqnarray*}
  completing the proof of the Claim.

  Apply Lemma~\ref{lem:LinSol} with
  $A = Df\left(\zeta (\xi), u (\tau,\xi)\right)$, first with $m = g$
  as defined in~\eqref{eq:57}, then with $m = g^h$ where $g^h$ is
  defined, for all Borel subset $E$ of $\reali$, by
  \begin{displaymath}
    g^h (E)
     =
    \sum_{y \in \mathcal{I}(\zeta^h)} \!
    \left(
      \Xi\left(\zeta^h (y+), \zeta^h (y), u (\tau, \xi)\right)
      -
      f\left(\zeta^h (y+),u (\tau, \xi)\right)
      +
      f\left(\zeta^h (y),u (\tau, \xi)\right)\right) \delta_{y} (E)
  \end{displaymath}
  and write
  \begin{equation}
    \label{eq:58}
    \begin{array}{rcl}
      U^\flat_{(u;\tau,\xi)} (\theta,x)
      & =
      & \displaystyle
        \sum_{i=1}^n
        l_i \cdot
        \left(
        u (\tau, x-\lambda_i\, \theta)
        +
        \frac{1}{\lambda_i} \,
        \int_{x-\lambda_i \theta}^x \d{g}
        \right) r_i,
      \\
      U^{\flat h}_{(u;\tau,\xi)} (\theta,x)
      & =
      & \displaystyle
        \sum_{i=1}^n
        l_i \cdot
        \left(
        u (\tau, x-\lambda_i\, \theta)
        +
        \frac{1}{\lambda_i} \,
        \int_{x-\lambda_i \theta}^x \d{g^h}
        \right) r_i.
    \end{array}
  \end{equation}
  Similarly to~\eqref{eq:54}, fix $a,b,\xi$ in $\reali$ with
  $a < \xi < b$, let
  $\theta \in \mathopen]0, (b-a)/\hat\lambda\mathclose[$ and compute
  \begin{eqnarray}
    \nonumber
    &
    & \dfrac1\theta \int_{a + \hat\lambda \theta}^{b - \hat\lambda \theta}
      \norma{
      \left(S_\theta^\zeta u (\tau)\right) (x)
      -
      U^\flat_{(u;\tau,\xi)} (\theta,x)
      }
      \d{x}
    \\
    \label{eq:50}
    & \leq
    & \dfrac1\theta
      \int_{a + \hat\lambda \theta}^{b - \hat\lambda \theta}
      \norma{
      \left(S_\theta^\zeta u (\tau)\right) (x)
      -
      \left(S_\theta^{\zeta^h} u (\tau)\right) (x)
      }
      \d{x}
    \\
    \label{eq:56}
    &
    & +
      \dfrac1\theta \int_{a + \hat\lambda \theta}^{b - \hat\lambda \theta}
      \norma{
      \left(S_\theta^{\zeta^h} u (\tau)\right) (x)
      -
      U^{\flat h}_{(u;\tau,\xi)} (\theta,x)
      }
      \d{x}
    \\
    \label{eq:55}
    &
    & +
      \dfrac1\theta \int_{a + \hat\lambda \theta}^{b - \hat\lambda \theta}
      \norma{
      U^{\flat h}_{(u;\tau,\xi)} (\theta,x)
      -
      U^\flat_{(u;\tau,\xi)} (\theta,x)
      }
      \d{x}.
  \end{eqnarray}
  The first term~\eqref{eq:50} vanishes as $h \to 0$ by the above
  construction of $S$.

  Since $\zeta^h$ is piecewise constant, to bound~\eqref{eq:56} we can
  use~(ii) in Theorem~\ref{thm:sgrp} as proved
  in~\S~\ref{subs:proof--theor-refthm:s} in the piecewise constant
  case:
  \begin{eqnarray*} [\mbox{\eqref{eq:56}}]
    & \leq
    & C\left[\tv\left(u(\tau), \mathopen]a,b\mathclose[\right)
      +
      \tv\left(\zeta^h, \mathopen]a,b\mathclose[\right)
      \right]^2
    \\
    & \leq
    & C\left[\tv\left(u(\tau), \mathopen]a,b\mathclose[\right)
      +
      h
      +
      \tv\left(\zeta, \mathopen]a,b\mathclose[\right)
      \right]^2
  \end{eqnarray*}
  where we used~\cite[Formula~(4.29)]{ColomboGuerraHolle}. In the
  limit $h \to 0$ we obtain the desired estimate.

  Compute~\eqref{eq:55} by means of~\eqref{eq:58} as
  \begin{eqnarray*} [\mbox{\eqref{eq:55}}]
    & =
    & \int_{a + \hat\lambda \theta}^{b - \hat\lambda \theta}
      \norma{
      \sum_{i=1}^n
      \dfrac{1}{\lambda_i}
      l_i \cdot
      \int_{x - \lambda_i \theta}^x
      \left(\d{g} - \d{g^h}\right)
      }
      \d{x}
    \\
    & \leq
    & \O \,
      \sum_{i=1}^n
      \int_{a + \hat\lambda \theta}^{b - \hat\lambda \theta}
      \norma{\int_{x - \lambda_i \theta}^x
      \left(\d{g} - \d{g^h}\right)
      }
      \d{x} \,.
  \end{eqnarray*}
  We now estimate the latter integrals, assuming that neither $x$ nor
  $x-\lambda_i \theta$ are discontinuity points for $\zeta$ or
  $\zeta^h$. Fix $i$ and call $J$ the real interval with extreme
  points $x$ and $x-\lambda_i \theta$.
  \begin{eqnarray*}
    &
    & \norma{\int_J  \left(\d{g} - \d{g^h}\right)}
    \\
    & =
    & \left\|
      \sum_{\bar x \in \mathcal{I} (\zeta) \cap J}
      \left(
      \Xi\left(\zeta (\bar x+), \zeta (\bar x), u (\tau, \xi)\right)
      -
      f\left(\zeta (\bar x+),u (\tau, \xi)\right)
      +
      f\left(\zeta (\bar x),u (\tau, \xi)\right)\right)
      \right.
    \\
    &
    & + \int_J
      \left(D_{v (x)}^+ \Xi\left(\zeta (x), \zeta (x), u (\tau,\xi)\right)
      - D_z f\left(\zeta(x),u(\tau,\xi)\right) v (x)\right) \d{\norma{\mu} (x)}
    \\
    &
    & \left.
      - \sum_{\bar x \in \mathcal{I}(\zeta^h) \cap J}
      \left(
      \Xi\left(\zeta^h (\bar x+), \zeta^h (\bar x), u (\tau, \xi)\right)
      -
      f\left(\zeta^h (\bar x+),u (\tau, \xi)\right)
      +
      f\left(\zeta^h (\bar x),u (\tau, \xi)\right)\right)
      \right\|
    \\
    & \leq
    & \mathcal{E}_1 + \mathcal{E}_2 + \mathcal{E}_3 + \mathcal{E}_4 +
      \mathcal{E}_5 + \mathcal{E}_6 \,.
  \end{eqnarray*}
  The terms $\mathcal{E}_1$, $\ldots$, $\mathcal{E}_6$ are defined
  below.

  In the first term, using the definition of $\mathcal{I}^h$
  in~\textit{\ref{item:r2}}, we show that the sum of all jumps in
  $\zeta$ not in $\mathcal{I}^h$ is $\O \, h$:
  \begin{displaymath}
    \begin{array}{rcl}
      \mathcal{E}_1
      &  =
      & \displaystyle
        \norma{
        \sum_{\bar x \in (\mathcal{I} (\zeta) \setminus \mathcal{I}^h) \cap J}
        \left(
        \Xi\left(\zeta (\bar x+), \zeta (\bar x), u (\tau, \xi)\right)
        -
        f\left(\zeta (\bar x+),u (\tau, \xi)\right)
        +
        f\left(\zeta (\bar x),u (\tau, \xi)\right)\right)
        }
      \\
      & \leq
      & \displaystyle
        \O \sum_{\bar x \in \mathcal{I} (\zeta) \setminus \mathcal{I}^h}
        \norma{\Delta \zeta (\bar x)}
        \hfill
        \mbox{[By~\ref{it:f1}, \ref{eq:Xi1} and~\ref{eq:Xi3}]}
      \\
      & \leq
      & \displaystyle
        \O \, h
        \hfill
        \mbox{[By~\ref{item:r2}]}
      \\
      & \to
      & 0 \quad \mbox{ as } h \to 0 \,.
    \end{array}
  \end{displaymath}

  Now we estimate the effect of passing from $\zeta^h (\bar x)$ to
  $\zeta (\bar x)$ in the jumps $\bar x$ in $\mathcal{I}^h$, calling
  $\bar{\bar{x}}$ the point in $\mathcal{I}(\zeta^h)$ that precedes
  $\bar x$ and using~\eqref{eq:51}:
  \begin{displaymath}
    \begin{array}{rcl}
      \mathcal{E}_2
      &  =
      & \displaystyle
        \left\|
        \sum_{\bar x \in \mathcal{I} (\zeta) \cap \mathcal{I}^h \cap J}
        \left[
        \Xi\left(\zeta^h (\bar x+), \zeta (\bar x), u (\tau, \xi)\right)
        -
        f\left(\zeta^h (\bar x+),u (\tau, \xi)\right)
        +
        f\left(\zeta (\bar x),u (\tau, \xi)\right)
        \right.
        \right.
      \\
      &
      & \displaystyle
        \left.
        \quad  \left. -
        \Xi\left(\zeta^h (\bar x+), \zeta^h (\bar x), u (\tau, \xi)\right)
        +
        f\left(\zeta^h (\bar x+),u (\tau, \xi)\right)
        -
        f\left(\zeta^h (\bar x),u (\tau, \xi)\right)
        \right]
        \right\|
      \\
      & \leq
      & \displaystyle
        \O \sum_{\bar x \in \mathcal{I} (\zeta) \cap \mathcal{I}^h \cap J}
        \norma{\zeta (\bar x) - \zeta^h (\bar x)}
        \hfill
        \mbox{[By~\ref{it:f1} and~\ref{eq:Xi1}]}
      \\
      & \leq
      & \displaystyle
        \O \sum_{\bar x \in \mathcal{I} (\zeta) \cap \mathcal{I}^h \cap J}
        \norma{\zeta (\bar x) - \zeta(\bar{\bar x}+)}
        \hfill
        \mbox{[By~\eqref{eq:51}]}
      \\
      & \leq
      & \displaystyle
        \O \sum_{\bar x \in \mathcal{I} (\zeta) \cap \mathcal{I}^h \cap J}
        \tv\left(\zeta; \mathopen]\bar{\bar x}, \bar x\mathclose[\right)
        \hfill
        \mbox{[By~\ref{eq:24}]}
      \\
      & \leq
      & \displaystyle
        \O \sum_{\bar x \in \mathcal{I} (\zeta) \cap \mathcal{I}^h \cap J}
        \dfrac{h}{1+\sharp \mathcal{I}^h}
        \hfill
        \mbox{[By~\ref{item:r3}]}
      \\
      & \leq
      & \O \, h
      \\
      & \to
      & 0 \quad \mbox{ as } h \to 0 \,.
    \end{array}
  \end{displaymath}

  Call $\bar{\bar{x}}$ the point in $\mathcal{I}(\zeta^h)$ that
  precedes $\bar x$.  Out of $\mathcal{I}^h$, the measure $\mu$
  approximates the measure $D \zeta$, so that
  $\zeta^h (\bar x) + \bar\delta \, \bar v$ approximates
  $\zeta^h (\bar x+)$ as in~\eqref{eq:62}--\eqref{eq:61}:
  \begin{eqnarray*}
    \mathcal{E}_3
    &  =
    & \left\|
      \sum_{\bar x \in \mathcal{I} (\zeta^h) \cap J \setminus \mathcal{I}^h}
      \left[
      \Xi\left(\zeta^h (\bar x+), \zeta^h (\bar x), u (\tau, \xi)\right)
      -
      f\left(\zeta^h (\bar x+),u (\tau, \xi)\right)
      +
      f\left(\zeta^h (\bar x),u (\tau, \xi)\right)
      \right.
      \right.
    \\
    &
    & \qquad -
      \left.
      \left.
      \Xi\left(\zeta^h (\bar x)+\bar\delta \bar v, \zeta^h (\bar x), u (\tau, \xi)\right)
      +
      f\left(\zeta^h (\bar x)+\bar\delta \bar v,u (\tau, \xi)\right)
      -
      f\left(\zeta^h (\bar x),u (\tau, \xi)\right)
      \right]
      \right\|
    \\
    & \leq
    & \O
      \sum_{\bar x \in \mathcal{I} (\zeta^h) \cap J \setminus \mathcal{I}^h}
      \norma{
      \zeta^h (\bar x+)
      -
      \zeta^h (\bar x)
      -
      \mu \left(\mathopen]\bar{\bar x}, \bar x \mathclose[ \right)
      }
    \\
    & =
    & \O
      \sum_{\bar x \in \mathcal{I} (\zeta^h) \cap J \setminus \mathcal{I}^h}
      \norma{
      \zeta^h (\bar x+)
      -
      \zeta^h (\bar{\bar x}+)
      -
      \mu \left(\mathopen]\bar{\bar x}, \bar x \mathclose[ \right)
      }
    \\
    & =
    & \O
      \sum_{\bar x \in \mathcal{I} (\zeta^h) \cap J \setminus \mathcal{I}^h}
      \norma{
      D\zeta\left(\mathopen]\bar{\bar x}, \bar x \mathclose] \right)
      -
      \mu \left([\bar{\bar x}, \bar x ] \right)
      }
    \\
    & \leq
    & \O
      \sum_{\bar x \not\in \mathcal{I}^h}
      \norma{\Delta\zeta (\bar x)}
    \\
    & \leq
    & \O \, h
    \\
    & \to
    & 0 \quad \mbox{ as } h \to 0 \,.
  \end{eqnarray*}

  Using Lemma~\ref{lem:uno}, the differences at the jumps in $\zeta^h$
  out of $\mathcal{I}^h$ are approximated by means of derivatives:
  \begin{eqnarray*}
    \mathcal{E}_4
    &  =
    & \left\|
      \sum_{\bar x \in \mathcal{I} (\zeta^h) \cap J \setminus \mathcal{I}^h}
      \left[
      \Xi\left(\zeta^h (\bar x) + \bar \delta \bar v, \zeta^h (\bar x), u (\tau, \xi)\right)
      -
      f\left(\zeta^h (\bar x) + \bar \delta \bar v,u (\tau, \xi)\right)
      +
      f\left(\zeta^h (\bar x),u (\tau, \xi)\right)
      \right.
      \right.
    \\
    &
    &
      \left.
      \left.-
      \bar \delta \left(
      D_{\bar v}^+ \Xi\left(\zeta^h (\bar x), \zeta^h (\bar x), u (\tau,\xi)\right)
      -
      D_z f\left(\zeta^h (\bar x), u (\tau,\xi)\right) \bar v
      \right)
      \right]
      \right\|
    \\
    & \leq
    & \O
      \sum_{\bar x \in \mathcal{I} (\zeta^h) \cap J \setminus \mathcal{I}^h}
      \sigma (\bar \delta) \, \bar \delta
    \\
    & \leq
    & \O \, \sigma (h) \, \tv (\zeta)
    \\
    & \to
    & 0 \quad \mbox{ as } h \to 0 \,.
  \end{eqnarray*}

  If $\bar x \in \mathcal{I}^h$,
  $\norma{\mu}\left(\mathopen]\bar{\bar x}, \bar x \mathclose[\right)
  = \bar \delta$ is negligible, so that by~\ref{item:r3},
  \eqref{eq:60}, \eqref{eq:62} and~\eqref{eq:61},
  \begin{eqnarray*}
    \mathcal{E}_5
    &  =
    & \left\|
      \sum_{\bar x \in \mathcal{I} (\zeta^h) \cap J \setminus \mathcal{I}^h}
      \left[
      \bar \delta \left(
      D_{\bar v} \Xi\left(\zeta^h (\bar x), \zeta^h (\bar x), u (\tau,\xi)\right)
      -
      D_z f\left(\zeta^h (\bar x), u (\tau,\xi)\right) \bar v
      \right)
      \right.
      \right.
    \\
    &
    & -
      \left.
      \left.
      \int_J
      \left(
      D^+_{v^h (x)} \Xi\left(\zeta^h (x),\zeta^h (x), u (\tau,\xi)\right)
      -
      D_z f\left(\zeta^h (x), u (\tau,\xi)\right) v^h (x)
      \right)
      \d{\norma{\mu} (x)}
      \right]
      \right\|
    \\
    & \leq
    & \O \sum_{\bar x \in \mathcal{I}^h} \bar \delta + \O \, h
    \\
    & \leq
    & \O \sum_{\bar x \in \mathcal{I}^h}
      \dfrac{h}{1+\sharp\mathcal{I}^h} + \O \, h
    \\
    & \to
    & 0 \quad \mbox{ as } h \to 0 \,.
  \end{eqnarray*}

  We now use the Lipschitz continuity of $D_v \Xi$, see~\ref{eq:Xi4},
  and of $D_z f$, see~\ref{it:f1}, the uniform convergence of
  $\zeta^h \to \zeta$ and the convergence $v^h \to v$
  by~\eqref{eq:63}:
  \begin{eqnarray*}
    \mathcal{E}_6
    &  =
    & \left\|
      \int_J
      \left(
      D^+_{v^h (x)} \Xi\left(\zeta^h (x),\zeta^h (x), u (\tau,\xi)\right)
      -
      D_z f\left(\zeta^h (x), u (\tau,\xi)\right) v^h (x)
      \right)
      \d{\norma{\mu} (x)}
      \right.
    \\
    &
    & - \left.
      \int_J
      \left(
      D^+_{v (x)} \Xi\left(\zeta (x),\zeta (x), u (\tau,\xi)\right)
      -
      D_z f\left(\zeta (x), u (\tau,\xi)\right) v (x)
      \right)
      \d{\norma{\mu} (x)}
      \right\|
    \\
    & \leq
    & \O \int_J
      \left(
      \norma{\zeta (x) - \zeta^h (x)}
      +
      \norma{v (x) - v^h (x)}
      \right)
      \d{\norma{\mu} (x)}
    \\
    & \to
    & 0 \quad \mbox{ as } h \to 0 \,.
  \end{eqnarray*}
  The proof of~\emph{(ii)} is completed.

  \bigskip

  We now prove that a $\L1$--Lipschitz continuous map $u$
  satisfying~\emph{(i)} and~\emph{(ii)} for a.e.~$t$ is actually an
  orbit of $S^\zeta$. Using~\cite[Theorem~2.9]{Bressan2000} as
  in~\cite[\S~9.2]{Bressan2000}, for any $a,b \in \reali$ with $a<b$
  \begin{eqnarray*}
    &
    & \norma{u (t)-S^\zeta_tu (0)}_{\L1 ([a+\hat\lambda \, t, b-\hat\lambda t]; \reali^n)}
    \\
    & \leq
    & L \, \int_0^t
      \liminf_{h\to 0+} \dfrac{1}{h} \,
      \norma{u (\tau+h) - S^\zeta_h u (\tau)}_{\L1 ([a+\hat\lambda \, (\tau+h), b-\hat\lambda (\tau+h)]; \reali^n)} \d\tau \,.
  \end{eqnarray*}
  Let $\tau$ be such that~\emph{(i)} and~\emph{(ii)} hold. Fix
  $\epsilon>0$ and choose $x_0 = a + \hat\lambda \tau$,
  $x_0 < x_1< x_2 < \ldots < x_{N-1} < x_N$,
  $x_N = b - \hat\lambda\tau$ such that, for $i=1, \ldots, N$,
  \begin{displaymath}
    \tv (u (\tau); \mathopen]x_{i-1}, x_i \mathclose[)
    +
    \tv (\zeta; \mathopen]x_{i-1}, x_i \mathclose[)
    < \epsilon \,.
  \end{displaymath}
  Then, for $h>0$ sufficiently small, and for
  $\xi_i \in \mathopen]x_{i-1}, x_i \mathclose[$
  \begin{eqnarray*}
    &
    &
      \norma{u (\tau+h) - S^\zeta_h u (\tau)}_{\L1 ([a+\hat\lambda \, (\tau+h), b-\hat\lambda (\tau+h)]; \reali^n)}
    \\
    & =
    & \sum_{i=1}^N
      \int_{x_{i-1}+\hat\lambda \, h}^{x_i-\hat\lambda \, h}
      \norma{u (\tau+h,x) - \left(S^\zeta_h u (\tau)\right) (x)} \d{x}
    \\
    &
    &
      +
      \sum_{i=1}^{N-1} \int^{x_i+\hat\lambda \, h}_{x_i-\hat\lambda \, h}
      \norma{u (\tau+h,x) - \left(S^\zeta_h u (\tau)\right) (x)} \d{x}
    \\
    & =
    & \sum_{i=1}^N
      \int_{x_{i-1}+\hat\lambda \, h}^{x_i-\hat\lambda \, h}
      \norma{u (\tau+h,x) - U^\flat_{(u;\tau,\xi_i)} (h,x)} \d{x}
    \\
    &
    & +
      \sum_{i=1}^N
      \int_{x_{i-1}+\hat\lambda \, h}^{x_i-\hat\lambda \, h}
      \norma{U^\flat_{(u;\tau,\xi_i)} (h,x) - \left(S^\zeta_h u (\tau)\right) (x)} \d{x}
    \\
    &
    &
      +
      \sum_{i=1}^{N-1} \int^{x_i+\hat\lambda \, h}_{x_i-\hat\lambda \, h}
      \norma{u (\tau+h,x) - U^\sharp_{(u;\tau,x_i)} (h,x)} \d{x}
    \\
    &
    &
      +
      \sum_{i=1}^{N-1} \int^{x_i+\hat\lambda \, h}_{x_i-\hat\lambda \, h}
      \norma{U^\sharp_{(u;\tau,x_i)} (h,x) - \left(S^\zeta_h u (\tau)\right) (x)} \d{x} \,.
  \end{eqnarray*}
  Since both $u$ and $S^\zeta$ satisfy~\emph{(ii)}, we get
  \begin{eqnarray*}
    &
    & \dfrac{1}{h} \norma{u (\tau+h) - S^\zeta_h u (\tau)}_{\L1 ([a+\hat\lambda \, t, b-\hat\lambda t]; \reali^n)}
    \\
    & \leq
    & \O  \sum_{i=1}^N
      \left(
      \tv\left(u (\tau); \mathopen]x_{i-1}, x_i\mathclose[\right)
      +
      \tv\left(\zeta; \mathopen]x_{i-1}, x_i\mathclose[\right)
      \right)^2
    \\
    &
    & +
      \sum_{i=1}^{N-1} \frac1h \int_{x_i+\hat\lambda \, h}^{x_i-\hat\lambda \, h}
      \norma{u (\tau+h,x) - U^\sharp_{(u;\tau,x_i)} (h,x)} \d{x}
    \\
    &
    &
      +
      \sum_{i=1}^{N-1} \frac1h \int_{x_i+\hat\lambda \, h}^{x_i-\hat\lambda \, h}
      \norma{U^\sharp_{(u;\tau,x_i)} (h,x) - \left(S_h u (\tau)\right) (x)} \d{x} \,.
  \end{eqnarray*}
  Both $u$ and $S^\zeta$ satisfy~\emph{(i)}, hence in the
  $\liminf_{h\to 0}$ the latter two terms vanish. Thus,
  \begin{eqnarray*}
    &
    & \liminf_{h\to 0}  \dfrac{1}{h} \norma{u (\tau+h) - S^\zeta_h u (\tau)}_{\L1 ([a+\hat\lambda \, t, b-\hat\lambda t]; \reali^n)}
    \\
    & \leq
    & \O  \sum_{i=1}^N
      \left(
      \tv\left(u (\tau); \mathopen]x_{i-1}, x_i\mathclose[\right)
      +
      \tv\left(\zeta; \mathopen]x_{i-1}, x_i\mathclose[\right)
      \right)^2
    \\
    & \leq
    &
      \O \, \epsilon \,
      \left(
      \tv\left(u (\tau)\right)
      +
      \tv(\zeta)
      \right) \,.
  \end{eqnarray*}
  Since $\epsilon$ is arbitrary, the term in the left hand side above
  vanishes. The arbitrariness of $a$ and $b$ allows to complete the
  proof.
\end{proofof}

\bigskip

\noindent\textit{Acknowledgment.} The first and second authors
were partly supported by the GNAMPA~2022 project \emph{"Evolution
  Equations: well posedness, control and applications"}. The work of
the third author has been funded by the Deutsche
Forschungsgemeinschaft (DFG, German Research Foundation) Projektnummer
320021702/GRK2326 Energy, Entropy, and Dissipative Dynamics (EDDy).

{ \small

  \bibliography{CGH}

  \bibliographystyle{abbrv}

}
\end{document}